\newcommand{\ie}{\textit{i}.\textit{e}.}
\DeclareFontFamily{OMX}{MnSymbolE}{}
\DeclareSymbolFont{MnLargeSymbols}{OMX}{MnSymbolE}{m}{n}
\DeclareFontShape{OMX}{MnSymbolE}{m}{n}{
	<-6>  MnSymbolE5
	<6-7> MnSymbolE6
	<7-8>  MnSymbolE7
	<8-9>  MnSymbolE8
	<9-10> MnSymbolE9
	<10-12> MnSymbolE10
	<12->   MnSymbolE12
}{}
\DeclareFontShape{OMX}{MnSymbolE}{b}{n}{
	<-6>   MnSymbolE-Bold5
	<6-7>  MnSymbolE-Bold6
	<7-8>  MnSymbolE-Bold7
	<8-9>  MnSymbolE-Bold8
	<9-10> MnSymbolE-Bold9
	<10-12> MnSymbolE-Bold10
	<12->   MnSymbolE-Bold12
}{}
\let\llangle\@undefined
\let\rrangle\@undefined
\DeclareMathDelimiter{\llangle}{\mathopen}%
{MnLargeSymbols}{'164}{MnLargeSymbols}{'164}
\DeclareMathDelimiter{\rrangle}{\mathclose}%
{MnLargeSymbols}{'171}{MnLargeSymbols}{'171}
\newtheorem{thm}{Theorem}[section]
\newtheorem{definition}[thm]{Definition} 
\newtheorem{lemma}[thm]{Lemma}
\newtheorem{proposition}[thm]{Proposition}
\newtheorem{corollary}[thm]{Corollary}
\newtheorem{remark}[thm]{Remark}
\newtheorem{notation}[thm]{Notation}
\theoremstyle{plain}
\newcommand\preceqdot{\mathrel{\ooalign{$\preceq$\cr
  \hidewidth\raise0.225ex\hbox{$\cdot\pkern0.5mu$}\cr}}}
\newcommand{\Rmnum}[1]{\expandafter\@slowromancap\romannumeral #1@}
\newcommand{\ra}{\rightarrow}
\newcommand{\s}[1]{$#1 $}
\newcommand{\mn}{\mathbb{N}}
\newcommand{\mz}{\mathbb{Z}}
\newcommand{\mr}{\mathbb{R}}
\newcommand{\rom}[1]{\uppercase\expandafter{\romannumeral #1\relax}}
\begin{document}

\title{Bounded cohomology of diffeomorphism groups of higher dimensional spheres}

\author{Zixiang Zhou}
\address{School of Mathematical Sciences, Fudan University, Handan Road 220, Shanghai, 200433, China}
\email{zxzhou22@m.fudan.edu.cn}


\keywords{}

\maketitle

\begin{abstract}
    In this paper we prove the vanishing of the bounded cohomology of \s{\text{Diff}^r_+(S^n)} with real coefficients when \s{n\geq 4} and \s{1\leq r\leq \infty}. This answers the question raised in \cite{FNS24}
    for \s{\geq 4} dimensional spheres . 
\end{abstract}

\section*{Introduction}

Discrete bounded cohomology of groups of homeomorphisms and diffeomorphisms on manifolds have been  studied by different approaches recently. On the one hand, the construction of quasimorphisms on the group \s{\text{Diff}_0(S_g)} \cite[Theorem 1.2]{BHW22}, where \s{S_g} is the closed surface with genus \s{g>0}, shows that the dimension of \s{H^2_b(\text{Diff}_0(S_g))} is infinite; On the other hand, Monod and Nariman developed a general method to
compute bounded cohomology of groups of transformation groups of manifolds in \cite{MN23}.  Given a group action on a semi-simplicial complex, if several bounded acyclicity conditions are satisfied, then we can use the quotient complex of this action to compute the bounded cohomology of this group (see \cref{gp to quotient}).
In this framework, they first computed \s{H^\bullet_b(\text{Diff}^r_+(S^1))} and \s{H^\bullet_b(\text{PL}_+(S^1))}, showing they are both generated by a second degree bounded class. 
Moreover, they showed \s{\text{Diff}^r_+(D^2,\partial D^2)} and \s{\text{Diff}^r_+(M\times \mr^n)} are boundedly acyclic. They also proved the vanishing of \s{H^\bullet_b(\text{Diff}^r_+(S^n))} in low degrees.
Following their study, a more recent paper \cite[Theorem A,B]{FNS24} showed \s{\text{Diff}^r_+(\mr^n)} and \s{\text{Homeo}_+(D^n,\partial D^n)} are boundedly acyclic. So the bounded cohomology of \s{\text{Diff}(\mr^n)} and  \s{Homeo(\mr^n)} have a very different nature from their ordinary cohomology. As an application, they provided the unboundness of the characteristic class of flat \s{\mr^n}-bundles. 


Our main result in this paper is the computation of the bounded cohomology of groups of orientation-preserving diffeomorphisms or homeomorphisms on  high-dimensional spheres, which answers \cite[Question 6.7]{FNS24} for higher dimensions:

\begin{thm}
 \s{\text{Diff}^r_+(S^n)} is boundedly acyclic for \s{n\geq 4,1\leq r\leq \infty}.
\end{thm}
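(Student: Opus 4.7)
The plan is to apply the Monod--Nariman criterion (\cref{gp to quotient}) to a carefully chosen action of $G := \text{Diff}^r_+(S^n)$ on a semi-simplicial set, reducing the bounded acyclicity of $G$ to that of subgroups whose bounded acyclicity is already known from \cite{MN23, FNS24}. The two main inputs are that $\text{Diff}^r_+(\mr^n)$ is boundedly acyclic (\cite[Theorem A]{FNS24}) and that $\text{Diff}^r_+(M \times \mr^n)$ is boundedly acyclic for reasonable $M$ \cite{MN23}, so the goal is to exhibit a semi-simplicial $G$-set whose simplex stabilizers decompose as products of such groups.

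Concretely, I would let $X_\bullet$ be the semi-simplicial set whose $k$-simplices are $(k+1)$-tuples of disjoint orientation-preserving smooth embeddings $\varphi_i : \mr^n \hookrightarrow S^n$ (possibly equipped with collar or germ data to pin down the stabilizer), with $G$ acting by post-composition and face maps forgetting one embedding. For $n \ge 2$, the smooth isotopy-extension theorem yields transitivity of the $G$-action on each $X_k$, so the orbit complex $X_\bullet/G$ is a point in every degree and trivially has vanishing bounded cohomology. The stabilizer of a simplex $\sigma = (\varphi_0, \dots, \varphi_k)$ consists of diffeomorphisms fixing each $\varphi_i(\mr^n)$ in the prescribed sense, and---after choosing the right amount of rigidity---I expect to identify it with a product of diffeomorphism groups of pieces of $S^n$ each modeled on $\mr^n$ or $\mr^n \times M$. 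Bounded acyclicity of the stabilizer then follows from \cite[Theorem A]{FNS24} and compatibility of bounded acyclicity with products, and the Monod--Nariman machine yields the conclusion.

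The main obstacle will be choosing the semi-simplicial data so that two competing demands are simultaneously met: the data must be \emph{rigid enough} that each stabilizer splits cleanly into a product of groups to which \cite{FNS24} or \cite{MN23} applies (and not contain an uncontrolled residual component supported near the boundary of a removed disk), yet \emph{loose enough} that the $G$-action remains transitive on every $X_k$ and the realization $|X_\bullet|$ retains the connectivity needed for \cref{gp to quotient} to apply. A reasonable compromise is to remember disks together with collared parametrizations and to use collar-neighborhood arguments to decouple the inside and the outside of each disk. The hypothesis $n \ge 4$ is expected to enter at exactly this point: only in this range of dimensions do the required fragmentation, isotopy-extension, and Morlet-style smoothing arguments go through cleanly, avoiding the low-dimensional obstructions (the Euler class in dimensions $\le 2$ and the Smale-conjecture-type difficulties in dimension $3$) that would otherwise obstruct the desired product decomposition of the stabilizers.
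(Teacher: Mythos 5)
Your overall strategy is aligned with the paper: apply the Monod--Nariman criterion (\cref{gp to quotient}) to the action of $\text{Diff}^r_+(S^n)$ on a complex of germs of embedded disks (the ``fat point'' complex of \cite{MN23}), and reduce the problem to the bounded acyclicity of the simplex stabilizers. Up to a coamenability reduction (\cref{deformcoame}), those stabilizers are exactly the groups $G_k = \text{Diff}^r_c(\Lambda_k)$ of compactly supported diffeomorphisms of $S^n$ with $k+1$ disjoint closed balls removed. That much is in the paper.

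The gap is in the next step, and it is where all the actual work of the paper lives. You claim the stabilizers ``decompose as products of diffeomorphism groups of pieces of $S^n$ each modeled on $\mr^n$ or $\mr^n \times M$,'' and then you appeal to \cite[Theorem~A]{FNS24} or \cite{MN23}. This is false for $k \geq 2$. Removing one ball from $S^n$ leaves a disk, and removing two leaves an annulus $S^{n-1} \times I$ (covered by \cref{ba for product}); but removing $k+1 \geq 3$ balls leaves a manifold with $k+1$ boundary spheres that is not a product of the form $M \times \mr$, and $\text{Diff}^r_c$ of it does not split as a product of groups already known to be boundedly acyclic. There is no collar or germ decoration you can add to the embeddings that makes this ``pair of pants'' region split. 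This is precisely the point at which the paper does \emph{not} terminate but instead runs the Monod--Nariman criterion a second time, from inside $G_k$: it constructs a new semi-simplicial $G_k$-complex $Y_\bullet$ whose vertices are germs of $k$ disjoint arcs connecting $B_0$ to $B_1,\dots,B_k$, proves a nontrivial transitivity statement (\cref{transitivity of orbit}) using the Main Lemma \cref{annuisoto}, identifies the quotient with a generic complex $Z_\bullet$, and shows the $Y_\bullet$-stabilizers are coamenable to $\text{Diff}^r_c\bigl((\mr^{n-1} - P)\times \mr\bigr)$, which is a product and hence handled by \cref{ba for product}. In other words, a genuinely new inner complex and a differential-topological main lemma are required to turn the non-product $G_k$ into something a product theorem can touch; your proposal treats this as a routine bookkeeping step.

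A secondary inaccuracy: your explanation of where $n \geq 4$ enters (fragmentation, Morlet smoothing, Smale-conjecture issues) does not match the actual mechanism. The constraint appears in the Main Lemma \cref{annuisoto} and in \cref{generic}: one needs 1-dimensional arcs and 2-parameter isotopies of arcs to be made pairwise disjoint by transversality inside an $n$-manifold, which requires $1+2 < n$, i.e.\ $n \geq 4$. This is also why the paper explicitly says the argument fails for $n = 2, 3$: in dimension 2 the generic relation on arcs cannot be defined by disjointness, and in dimension 3 the stabilizers may fail to be boundedly acyclic.
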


This implies, by \cref{coame to inj}, that \s{\text{Diff}^r(S^n)} is also bounded acyclic. Hence the comparison map \s{H^\bullet_b(\text{BDiff}^\delta_{r}(S^n))\ra H^\bullet(\text{BDiff}^\delta_{r}(S^n))} is zero, where \s{"\delta"} means we view it as a discrete subgroup, this has an immediate application that answers \cite[Question 7.4]{MN23} for higher dimensions:

\begin{corollary}
    Every nontrivial class  of \s{H^\bullet(\text{BDiff}^\delta_{r}(S^n))} in positive degree is unbounded for \s{n\geq 4,1\leq r\leq \infty}.
\end{corollary}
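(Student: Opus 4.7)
The plan is to deduce this corollary as a short formal consequence of the main theorem, together with two standard facts: the behaviour of bounded cohomology under coamenable inclusions and the identification of group bounded cohomology with the bounded cohomology of the discrete classifying space.

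First I would upgrade the main theorem from the orientation-preserving subgroup to the full diffeomorphism group. Since $\text{Diff}^r_+(S^n)$ sits inside $\text{Diff}^r(S^n)$ as a subgroup of index $2$, it is automatically coamenable. By the cited principle \cref{coame to inj}, this makes the restriction map $H^\bullet_b(\text{Diff}^r(S^n))\to H^\bullet_b(\text{Diff}^r_+(S^n))$ injective in every degree; combining this with the main theorem yields $H^{>0}_b(\text{Diff}^r(S^n))=0$.

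Next I would convert this vanishing into the desired statement about $BG^\delta$ for $G=\text{Diff}^r(S^n)$. Bounded group cohomology is naturally isomorphic to the bounded cohomology of the discrete classifying space $BG^\delta$, and by definition a class in $H^\bullet(BG^\delta)$ is called bounded precisely when it lies in the image of the comparison map $c\colon H^\bullet_b(BG^\delta)\to H^\bullet(BG^\delta)$. The first step forces the domain of $c$ to vanish in every positive degree, so $c$ is identically zero there, which is exactly the claim that no nontrivial class in $H^{>0}(BG^\delta)$ can be bounded.

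The genuine difficulty is concentrated entirely in the main theorem; the corollary is essentially automatic once bounded acyclicity of $\text{Diff}^r_+(S^n)$ has been established. The only subsidiary items to confirm are that finite-index subgroups are coamenable, so that \cref{coame to inj} applies, and that the group-theoretic bounded cohomology matches the bounded cohomology of $BG^\delta$ in the discrete setting, neither of which presents an obstacle.
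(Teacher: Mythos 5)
Your proposal is correct and follows exactly the paper's approach: pass from $\text{Diff}^r_+(S^n)$ to $\text{Diff}^r(S^n)$ via coamenability of the index-$2$ subgroup and \cref{coame to inj}, then observe that vanishing of $H^{>0}_b$ forces the comparison map $H^\bullet_b(\text{BDiff}^\delta_r(S^n))\to H^\bullet(\text{BDiff}^\delta_r(S^n))$ to be zero in positive degrees. The paper states this as an immediate consequence of \cref{main diff} without further elaboration, and your write-up supplies precisely the same intermediate steps.
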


Our strategy, inspired by \cite[Theorem 1.10]{MN23}, is to show that for every \s{k>0}, every subset \s{F_k} with \s{k} elements of \s{S^n}, the group \s{\text{Diff}^r_c(S^n-F_k)} is boundedly acyclic.  Note that this group is coamenable to it's subgroup \s{\text{Diff}^r_c(S^n-U_k)}, where \s{U_k} is a closed tubular neighborhood of the \s{0}-dimensional manifold \s{F_k}, \ie, \s{U_k} is a disjoint union of \s{k} closed balls each containing a point in \s{F_k}. We denote \s{\text{Diff}^r_c(S^n-U_k)} by \s{G_k}.

To prove the bounded acyclicity of \s{G_k},
     we construct a semi-simplicial \s{G_k}-complex \s{Y_\bullet}, which  is the complex associated to a generic relation on its \s{0}-skeleton. We show this complex and the quotient complex as well as  every stabilizer of this action are all boundedly acyclic, thus by a spectral sequence argument (\cref{gp to quotient}), we can show \s{G_k} is boundedly acyclic. 
 Our techniques to prove the bounded acyclicity results are rooted in differential topology.
 We observe that in high dimensions, the diffeomorphisms of spheres that have a given germ at some arcs and disks can be chosen with sufficient freedom.
 Approximations of maps by embeddings , isotopy extensions and transversality theorems are  critical to our findings.

 Our method is not effective in the case when \s{n=2,3}. When \s{n=2}, we cannot define the generic relation in the same way.
 Though we can define another generic relation, for example, let arcs be  transverse as in \cite{MN23}. At this time, the computation of bounded cohomology of the quotient complex seems to be difficult. When \s{n=3}, we cannot get the desired isotopy results, and the stabilizers might not be boundedly acyclic. However, our construction seems to be hopeful to determine the bounded cohomology of \s{Homeo_+(S^n)} answering \cite[Question 7.4]{MN23} in higher dimensions.

\begin{notation} We will keep the following notations in the whole paper:
\begin{itemize}

\item In this paper, we always denote by \s{I} the unit interval \s{ [0,1]}.
     
    \item Given a map \s{f}, we write \s{im(f)} as the image of \s{f}. If \s{f} maps \s{X} to itself, then write \s{supp(f)} as the \textbf{closure} of set of points in \s{X} that is not fixed by \s{f}, namely the \emph{support of \s{f}}. Given a set \s{F} of self maps of \s{X}, then define the \emph{support of \s{F}} as \s{supp(F)=\bigcup_{f\in F}\overline{supp(f)}}.
    \item Given a manifold \s{M}, we denote by  \s{int(M)} the interior of \s{M}, and write the boundary of \s{M} as \s{\partial M}.
    
    \item Given \s{C^r}-manifolds \s{M,N}, let \s{C^r(M,N)} be the set of \s{C^r}-maps from \s{M} to \s{N}. 

    \item For a differentiable map \s{f} between differentiable manifolds, denote by \s{Df} the differential of \s{f} between the tangent bundles. 
    \item Given a map \s{f} and a subset  \s{A\subset dom(f)}, then we write \s{f_A} as the restriction map of \s{f} on \s{A}.
    \item Given a smooth oriented manifold \s{M}. Let \s{\text{Diff}^r_{+}(M)} be the the group of orientation-preserving \s{C^r}-diffeomorphisms of \s{M};
    Let \s{\text{Diff}^r_{c}(M)} be the group of  \s{C^r}-diffeomorphisms with compact support in the \textbf{interior} of \s{M}.

   \item Given \s{f\in C^r(U,V)} where \s{U,V} are open sets of some Euclidean spaces, given \s{0\leq l\leq r, x\in U}, we denote by \s{|D^l(f)(x)|} the maximum of the norms of all \s{l}-order partial derivatives of \s{f} at \s{x}.    
\end{itemize}
    
\end{notation}

\section*{Acknowledgenment}
The author is indebted to his advisor Xiaolei Wu, who patiently proceeded the proofreading and provided many useful suggestions for the preparation of the paper. He is also grateful to his friend Josiah Oh, who pointed out some errors in the paper; And his teacher Guozhen Wang
, who pointed out many useful sources and facts about topological manifolds him the realted questions. He also thanks 
Francesco Fournier-Facio, Nicolas Monod, Sam Nariman for useful discussions.

\section{Tools in differential topology and bounded cohomology}

In this section we will assume \s{1\leq r\leq \infty} unless otherwise specified.
\subsection{Tools in bounded cohomology: coamenable groups and group actions.}

Let $X_{\bullet}$ be a semi-simplicial set. Its bounded cohomology $H_{b}^{n}\left(X_{\bullet}\right)$ is the cohomology of the complex
$$
0 \rightarrow \ell^{\infty}\left(X_{0}\right) \rightarrow \ell^{\infty}\left(X_{1}\right) \rightarrow \ell^{\infty}\left(X_{2}\right) \rightarrow \cdots
$$
where the coboundary operators are defined as the duals of the face maps of $X_{\bullet}$. The coefficients are always understood to be $\mathbb{R}$ unless specified otherwise. We say that $X_{\bullet}$ is \emph{boundedly acyclic} if $H_{b}^{n}\left(X_{\bullet}\right)=0$ for all $n>0$.

If $G$ is a discrete group and $X_{\bullet}$ is the nerve of $G$, this defines the bounded cohomology of $G$, denoted by $H_{b}^{n}(G)$. More explicitly, we have $X_{\bullet}=G^{\bullet}$, with face maps defined as
$$
\begin{aligned}
& d_{i}: G^n \rightarrow G^{n-1} \\
& d_{0}\left(g_{1}, \ldots, g_{n}\right)=\left(g_{2}, \ldots, g_{n}\right) \\
& d_{i}\left(g_{1}, \ldots, g_{n}\right)=\left(g_{1}, \ldots, g_{i-1}, g_{i} g_{i+1}, g_{i+2}, \ldots, g_{n}\right), \quad i \in\{1, \ldots, n-1\} \\
& d_{n}\left(g_{1}, \ldots, g_{n}\right)=\left(g_{1}, \ldots, g_{n-1}\right)
\end{aligned}
$$

We say that the group $G$ is boundedly acyclic if $H_{b}^{n}(G)=0$ for all $n>0$.

Now we first prove a theorem which is a generalization for \cite[Theorem 1.6]{MN23}:

\begin{lemma}\label{ba for product}
    For any smooth manifold \s{M}, \s{1\leq r\leq \infty}, the group \s{\text{Diff}^r_{c}(M\times \mr)} is boundedly acyclic.
\end{lemma}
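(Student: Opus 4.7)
The plan is to invoke the \emph{commuting conjugates} bounded acyclicity criterion: a group $G$ is boundedly acyclic provided that, for every finitely generated subgroup $H \leq G$, there exists $t \in G$ with $[H,\,tHt^{-1}] = 1$, i.e.\ every element of $H$ commutes with every element of $tHt^{-1}$. This principle is the engine behind \cite[Theorem~1.6]{MN23}, and it reduces the present lemma to exhibiting enough displacement inside $G = \text{Diff}^r_{c}(M \times \mr)$.

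Given a finitely generated subgroup $H \leq G$, I would fix a finite generating set. Each generator has compact support in $M \times \mr$, so one can choose a compact codimension-$0$ submanifold $K \subset M$ and an $N > 0$ such that every generator, and hence every element of $H$, has support contained in the slab $K \times [-N,\,N]$.

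The displacement element $t$ will be built by flowing in the $\mr$-direction. Choose a smooth bump function $\chi \colon M \times \mr \to [0,1]$ that is identically $1$ on $K \times [-N,\,3N+1]$ and compactly supported in a small open neighborhood of that set. The vector field $\chi(x,s)\,\partial_s$ is then compactly supported on $M \times \mr$, and its time-$(2N+1)$ flow $t$ is an element of $G$ that maps $K \times [-N,\,N]$ into $K \times [N+1,\,3N+1]$. Since the target slab is disjoint from the support of $H$, the subgroup $tHt^{-1}$ has support in a region disjoint from that of $H$, hence $H$ and $tHt^{-1}$ commute pointwise. The criterion then forces $H^n_b(G) = 0$ for all $n > 0$.

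The only mildly delicate point, and the reason the statement genuinely generalizes \cite[Theorem~1.6]{MN23} beyond the case of compact $M$, is the requirement that $t$ have compact support in $M \times \mr$: one cannot simply take a product displacement $\mathrm{id}_M \times \sigma$ with $\sigma \in \text{Diff}^r_c(\mr)$, since this is not compactly supported when $M$ is noncompact. Localizing to the compact slab $K \times [-N,\,3N+1]$ via the cutoff $\chi$ before integrating in the $s$-direction is exactly what repairs this issue, and once it is in place the commuting conjugates criterion delivers the conclusion immediately.
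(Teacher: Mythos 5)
Your proof is correct and takes essentially the same route as the paper's: build a compactly supported vector field on $M\times\mathbb{R}$ pointing in the $\mathbb{R}$-direction, use its time-$T$ flow to displace the compact support of a given finitely generated subgroup off of itself, and invoke the commuting-conjugates criterion of \cite{CFFLM23}. The paper constructs the vector field as a product $\psi(k)\varphi(t)\,\partial_t$ of two bump functions rather than a single $\chi(x,s)\,\partial_s$, and phrases the conclusion as producing a $\mathbb{Z}$-commuting conjugate (the displacement persists for all positive powers of the flow), which your construction also gives automatically since the forward flow is monotone in the $\mathbb{R}$-coordinate and therefore cannot return any point of $K\times[-N,N]$ back below $N+1$.
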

\begin{proof}
    The idea is to construct a \s{\mz-}commuting conjugate \cite[Theorem 1.3]{CFFLM23} for every finitely generated subgroup \s{\Gamma}. First, since \s{\Gamma} is finitely generated, we have \s{supp(\Gamma)\subset K\times J} where \s{K} and \s{J} are compact. Pick a neighborhood \s{K'}(resp. \s{J'}) of \s{K}(resp.\s{J}) with compact closure and a smooth map \s{\psi: M\ra [0,1]}(resp.\s{\varphi: \mr\ra [0,1]}) such that
    \s{\psi_K\equiv 1} and \s{\psi_{M-K'}\equiv 0}(resp.\s{\varphi_J\equiv 1} and \s{\varphi_{M-J'}\equiv 0}).
    . We can construct a smooth vector field \s{X} on \s{M\times \mr} by \s{X_{(k,t)}=(0,\psi(k)\varphi(t))} for all \s{(k,t)\in M\times \mr}. This is a bounded supported hence complete vector field. Let \s{(\Psi_t)_{t\in\mr}} be the associated flow, then for all \s{t>|J|}, \s{\Psi_t(K\times J)} is disjoint from \s{K\times J}. Hence 
    \s{\Psi_t^m(K\times J)=\Psi_{tm}(K\times J)} is disjoint from \s{K\times J}, which means
    \s{\Psi_t} is a \s{\mz-}commuting conjugate for \s{\Gamma}. Hence the result follows from \cite[Theorem 1.3]{CFFLM23}.
\end{proof}

A group $G$ is \emph{coamenable} to its subgroup  $H$ (or say \s{H} is coamenable in \s{G}) if there is a $G$-invariant mean on $G / H$, apparently every group is coamenable to it's finite index subgroup. the following propositions will be used:

\begin{proposition}\cite[Proposition 10]{Mon22}\label{conju to coame}
  Let $G$ be any group and $G_{0}<G$ a subgroup.
If every finite subset of $G$ is contained in some conjugate of $G_{0}$, then $G_{0}$ is co-amenable in $G$.

\end{proposition}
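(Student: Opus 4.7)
The plan is a short weak-$*$ compactness argument once one unpacks the definition of coamenability. Recall that $G_0$ is coamenable in $G$ precisely when the left $G$-set $G/G_0$ admits a $G$-invariant mean. Let $M$ denote the set of all means on $\ell^\infty(G/G_0)$; this is a weak-$*$ compact convex subset of the dual space by the Banach--Alaoglu theorem. For each finite subset $F \subset G$, let $M_F \subset M$ be the subset of $F$-invariant means, which is weak-$*$ closed. A $G$-invariant mean is exactly an element of $\bigcap_F M_F$ as $F$ ranges over the finite subsets of $G$, so it suffices to show this intersection is non-empty.

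The heart of the argument is that each $M_F$ is non-empty, and this is where the hypothesis is used. Pick $g \in G$ with $F \subset g G_0 g^{-1}$. For every $f \in F$ write $f = ghg^{-1}$ with $h \in G_0$; then $f \cdot (gG_0) = ghg^{-1} \cdot gG_0 = ghG_0 = gG_0$. Thus the coset $gG_0 \in G/G_0$ is a common fixed point for the left action of $F$, so the Dirac mass $\delta_{gG_0}$ is an $F$-invariant mean, witnessing $M_F \neq \emptyset$.

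Finally, the family $\{M_F\}$ has the finite intersection property: for finite subsets $F_1,\dots,F_k \subset G$, set $F = F_1 \cup \cdots \cup F_k$, and observe that $M_F \subset \bigcap_i M_{F_i}$, which is therefore non-empty by the previous step. Weak-$*$ compactness of $M$ then yields $\bigcap_F M_F \neq \emptyset$, and any element of this intersection is a $G$-invariant mean on $G/G_0$. There is no genuine obstacle in the proof; the only place to be careful is the action convention, making sure that containment of $F$ in the \emph{conjugate} $gG_0g^{-1}$ (rather than in $G_0$ itself) translates correctly into $F$ fixing the \emph{coset} $gG_0$ under the left $G$-action on $G/G_0$.
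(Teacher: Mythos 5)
Your proof is correct. The paper cites this proposition from Monod \cite{Mon22} without reproducing a proof, but your argument is precisely the standard one (and, up to cosmetic reformulation as a net of Dirac masses accumulating in the weak-$*$ topology, the same as Monod's): the Dirac mass at the coset $gG_0$ is fixed by the conjugate $gG_0g^{-1}$, so finitely many elements of $G$ always have a common fixed mean, and weak-$*$ compactness of the space of means together with the finite intersection property produces a fully $G$-invariant mean on $G/G_0$. Your check that $f\cdot gG_0 = gG_0$ for $f = ghg^{-1}$ with $h \in G_0$ is exactly the computation needed, and fixing the coset automatically gives invariance of $\delta_{gG_0}$ under $f$ and $f^{-1}$ alike, so the argument is airtight.
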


\begin{proposition}\cite[Proposition 11]{Mon22}\label{coame to inj}
    Given any group \s{G}, if \s{G} is coamenable to its subgroup \s{G_0} then the natural map induced by the inclusion:
$$
{H}_{{b}}^{\bullet}(G) \rightarrow {H}_{{b}}^{\bullet} (G_{0})
$$
is injective. Therefore, if \s{G_0} is boundedly acyclic, then so is \s{G}.
\end{proposition}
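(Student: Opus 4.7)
The plan is to produce a transfer map \s{\tau\colon H_b^\bullet(G_0)\to H_b^\bullet(G)} that is a left inverse to the restriction map \s{r\colon H_b^\bullet(G)\to H_b^\bullet(G_0)} induced by the inclusion. Once split injectivity of \s{r} is established at the cohomological level, the ``therefore'' clause is immediate: if \s{H_b^n(G_0)=0} for all \s{n>0}, then the injection \s{H_b^n(G)\hookrightarrow H_b^n(G_0)=0} forces \s{H_b^n(G)=0}.

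I would compute both bounded cohomologies via the homogeneous bar resolution, so that degree-\s{n} cochains are bounded functions \s{f\colon G^{n+1}\to \mr} invariant under the diagonal left action of \s{G} (respectively \s{G_0}), with coboundary the standard alternating sum of face maps; the restriction \s{r} is simply ``view a \s{G}-invariant cochain as \s{G_0}-invariant''. Using the \s{G}-invariant mean \s{\mk{m}} on \s{G/G_0} furnished by coamenability, for each bounded \s{G_0}-invariant cochain \s{f} I would set
\[
(\tau f)(g_0,\dots,g_n)\;=\;\mk{m}\bigl(xG_0\mapsto f(x^{-1}g_0,\dots,x^{-1}g_n)\bigr).
\]
The integrand descends from \s{G} to \s{G/G_0} precisely by the left \s{G_0}-invariance of \s{f}; left \s{G}-invariance of \s{\mk{m}} then makes \s{\tau f} diagonally \s{G}-invariant; and linearity of \s{\mk{m}} forces \s{\tau} to commute with each face map and hence with the coboundary. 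Moreover, if \s{f} is already \s{G}-invariant the integrand is the constant function \s{f(g_0,\dots,g_n)}, so normalization \s{\mk{m}(\mathbf{1})=1} yields \s{\tau\circ r=\mathrm{id}} at the cochain level, and \emph{a fortiori} on cohomology.

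The only point requiring real care is bookkeeping the left/right conventions so that \s{f(x^{-1}g_0,\dots,x^{-1}g_n)} really descends to a well-defined function of the coset \s{xG_0}, and then dualizing the diagonal action correctly to match the left-invariance condition on \s{\mk{m}}. Since \s{\mk{m}} is only a finitely additive mean rather than a measure, no Fubini-type fact is available, but each step above uses only the defining properties (positivity, normalization, and left \s{G}-invariance) of \s{\mk{m}}, so the whole argument is formal once the averaging formula is set up.
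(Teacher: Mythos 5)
Your proposal is correct and is essentially the standard transfer argument used in the cited source [Mon22, Proposition 11]; the paper itself gives no proof and simply cites that reference. The one technical point you gesture at but should be explicit about is that \s{\ell^\infty(G^{\bullet+1})} with the diagonal left \s{G_0}-action is a strong, relatively injective resolution of \s{\mr} over \s{G_0} (because \s{G} is a free left \s{G_0}-set), so its complex of \s{G_0}-invariants does compute \s{H_b^\bullet(G_0)}; once that is in place, everything else follows formally from positivity, normalization, and left \s{G}-invariance of the mean exactly as you describe.
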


\begin{notation}
    \begin{itemize}
    \item  A \s{m}-dimensional \s{C^r}-manifold (resp. \s{C^r}-manifold with \s{k}-corners) is defined by the \s{C^r}-charts from open subsets of \s{\mr^{m}\times\mr_{\geq 0}} (resp. \s{\mr^{m-k}\times\mr_{\geq 0}^{k}}).  
        \item Given \s{C^r}-manifolds \s{M} and \s{N}, we say a map \s{f: M\ra N} is a \emph{\s{C^r}-embedding} if \s{f} is a \s{C^r}-diffeomorphism onto its image. In this case we say \s{f(M)} is a submanifold of \s{N}. 
If \s{\partial M,\partial N\neq \varnothing} and \s{f} moreover maps the boundary to the boundary, the interior to the interior, and is transversal to \s{\partial N} then we say \s{f} is a \emph{neat} embedding. When \s{M} is a submanifold of \s{N} where the inclusion map is a neat embedding, we say \s{M} is a \s{C^r}-embedded \emph{neat} submanifold of \s{N}. It follows that when \s{M\subset N} is an \s{C^r}-embedded submanifold both with empty boundaries, or \s{M\subset N} is a \s{C^r}-embedded neat submanifold, then \s{M} has a tubular neighborhood in \s{N}.

\item Given \s{C^r}-manifolds \s{M, N} and a \s{C^r}-map \s{F:I\times M\ra N}, we say it's an \s{C^r}-isotopy if \s{F_s:=F(s,-): M\ra N} is an embedding for each \s{s\in I}.
\item Given a \s{C^r}-isotopy \s{F:I\times M\ra N}, if \s{N=M}, then we define the \emph{support of \s{F}} \s{supp(F)} to be the union of the support of all \s{F_s}.
    \end{itemize}
\end{notation}

The following is our main tool to prove the bounded cohomology of a group: 

\begin{lemma}\cite[Theorem 3.3]{MN23}\label{gp to quotient}
Let \s{G} be any group,
    if \s{G} acts on a boundedly acyclic connected semi-simplicial complex \s{X_\bullet} such that the set of \s{k}-stabilizers of this action is uniformly boundedly acyclic for each \s{k\geq 1}. Then \s{H^\bullet_b(G)\cong H^\bullet_b(X_\bullet/G)}.
\end{lemma}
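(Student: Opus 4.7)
The plan is to apply a double-complex/spectral-sequence argument analogous to the Hochschild--Serre setup for ordinary group cohomology, adapted to the $\ell^\infty$ setting. I would form the bicomplex
\[
C^{p,q} := \ell^\infty(X_p \times G^{q+1})^{G},
\]
where $G$ acts diagonally (on $X_p$ by the given action and on $G^{q+1}$ by simultaneous left translation). The horizontal differential is induced by the face maps of $X_\bullet$ and the vertical differential by the bar complex computing $H^\bullet_b(G)$. Let $E$ and $E'$ be the spectral sequences associated to the two filtrations.

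For the row spectral sequence $E$ (fix $p$, compute in $q$ first), decompose $X_p$ into $G$-orbits $X_p=\bigsqcup_\alpha G\cdot\sigma_\alpha$ with stabilizers $G_{\sigma_\alpha}$. Then $C^{p,\bullet}$ splits as the direct product $\prod_\alpha \ell^\infty(G^{\bullet+1})^{G_{\sigma_\alpha}}$, and each factor computes $H^\bullet_b(G_{\sigma_\alpha})$. Here the uniform bounded acyclicity hypothesis enters: it guarantees that primitives of bounded coboundaries can be chosen with norms controlled by a single constant across all $\alpha$, so the product of primitives is genuinely an element of $\ell^\infty$. Consequently $E_1^{p,q}=0$ for $q>0$ and $E_1^{p,0}=\ell^\infty(X_p/G)$, so $E$ collapses to the row $q=0$ and converges to $H^\bullet_b(X_\bullet/G)$.

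For the column spectral sequence $E'$ (fix $q$, compute in $p$ first), use the free right $G$-action on $G^{q+1}$ (acting on, say, the last coordinate) to identify the $q$-th column with a complex of $\ell^\infty$-valued cochains on $X_\bullet$. Since $X_\bullet$ is connected and boundedly acyclic, and the bounded acyclicity passes to $\ell^\infty$-valued coefficients by a norm-preserving contracting homotopy argument, the cohomology in $p$ is concentrated at $p=0$ and equals $\ell^\infty(G^{q+1})^G$. Thus $E'_2^{0,q}=H^q_b(G)$ and vanishes elsewhere, so $E'$ converges to $H^\bullet_b(G)$. Comparing the abutments of the two spectral sequences with the total cohomology of $C^{\bullet,\bullet}$ gives the desired isomorphism $H^\bullet_b(G)\cong H^\bullet_b(X_\bullet/G)$.

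The main obstacle is the careful bookkeeping of norms in the row direction. For ordinary (unbounded) cohomology such a product of acyclic complexes is automatically acyclic, but in the bounded setting a sequence of primitives whose norms tend to infinity produces an element \emph{not} in $\ell^\infty$; the \emph{uniform} in uniformly bounded acyclicity is precisely what rules this out. One must carefully track how the standard bar-resolution contractions interact with the $\ell^\infty$ norm and with passage to $G_{\sigma_\alpha}$-invariants, and confirm that the uniform bound on $k$-stabilizers gives a uniform bound on all chain-level homotopies needed to collapse $E_1$ for $q>0$. Convergence issues of the two first-quadrant spectral sequences are standard.
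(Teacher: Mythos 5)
The paper does not prove this lemma; it cites it from Monod--Nariman [MN23, Theorem 3.3]. Your double-complex spectral sequence argument is the standard approach to such statements and is, in outline, what the cited source does: form $C^{p,q}=\ell^\infty(X_p\times G^{q+1})^G$, collapse one spectral sequence by the orbit decomposition of $X_p$ (where uniform bounded acyclicity of stabilizers supplies the uniform norm constants needed for exactness of an $\ell^\infty$-product of complexes), and collapse the other by bounded acyclicity of $X_\bullet$. Your identification $E_1^{p,0}=\ell^\infty(X_p/G)$ and the reduction of the $q$-column to the inhomogeneous bar complex for $G$ are correct, and you correctly isolate the role of the ``uniform'' hypothesis.

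One justification is stated imprecisely and would need repair in a full write-up. In the column direction you assert that bounded acyclicity ``passes to $\ell^\infty$-valued coefficients by a norm-preserving contracting homotopy argument.'' Bounded acyclicity of $X_\bullet$ means only that the Banach complex $0\to\mathbb{R}\to\ell^\infty(X_0)\to\ell^\infty(X_1)\to\cdots$ is exact; it does not hand you a contracting homotopy, let alone a norm-preserving one. The correct mechanism is the open mapping theorem: exactness at degree $p$ forces $\mathrm{im}\,d_{p-1}=\ker d_p$ to be closed, so $d_{p-1}$ is open onto it, giving a constant $K_p$ such that every cocycle $z$ of norm $\le1$ admits a primitive of norm $\le K_p$. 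For $\ell^\infty(G^q)$-valued cochains, apply this for each fixed element of $G^q$; the resulting family of primitives is uniformly bounded by $K_p\|z\|_\infty$, hence defines an $\ell^\infty$-valued primitive. This yields the needed exactness of $\ell^\infty(X_\bullet\times G^q)$ in positive $p$-degrees with $H^0=\ell^\infty(G^q)$, without ever producing a chain homotopy. The analogous open-mapping reasoning also underlies the ``single constant across all $\alpha$'' you invoke in the row direction, where it is precisely the hypothesis of uniform bounded acyclicity that replaces the automatic (but $\alpha$-dependent) constant.
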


The following is a general technique to construct a coamenable subgroup in the diffeomorphism groups of a manifold.

\begin{lemma}\label{deformcoame}
    Let \s{M} be a \s{C^r}-manifold, \s{A} be a \s{C^r}-embedded  closed submanifold (or neatly embedded submanifold) of \s{M}, then there exists a closed tubular neighborhood \s{V} of \s{A} such that \s{\text{Diff}^r_{c}(M-A)} is coamenable to \s{\text{Diff}^r_{c}(M-V)}.
\end{lemma}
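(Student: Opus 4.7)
The plan is to verify the hypothesis of Proposition~\ref{conju to coame}: for a suitable closed tubular neighborhood $V$ of $A$, I will show every finite subset $F \subset \text{Diff}^r_c(M-A)$ lies in some conjugate $h\,\text{Diff}^r_c(M-V)\,h^{-1}$ with $h \in \text{Diff}^r_c(M-A)$. Since $supp(hfh^{-1}) = h(supp(f))$, this reduces to producing $h \in \text{Diff}^r_c(M-A)$ whose action sends the compact set $K := supp(F) \subset M - A$ entirely outside $V$.

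The key idea is a radial push inside a tubular neighborhood of $A$. I would fix an open tubular neighborhood $V' \supset V$ of $A$ in $M$ (in the neat case, chosen to be compatible with a collar of $\partial M$), identify $V' \setminus A$ with the complement of the zero section in the normal bundle $\nu \to A$ via a Riemannian metric, and let $r: V' \to [0,2)$ be the normal radial coordinate with $V = \{r \leq 1\}$. Given a finite $F$, the compactness of $K$ together with $K \cap A = \emptyset$ yields a lower bound $r_0 > 0$ for $r$ on $K \cap V'$; we may assume $r_0 < 1$, else $K \cap V = \emptyset$ and $h := \mathrm{id}$ already works. Moreover, $\pi_A(K \cap V') \subset A$ is compact, where $\pi_A: V' \to A$ is the tubular projection.

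Next I would construct $h$. Fix a smooth strictly increasing diffeomorphism $\phi: [0, 2] \to [0, 2]$ equal to the identity near $0$ and near $2$ and satisfying $\phi(r_0) > 1$; such $\phi$ exists since $r_0 < 1$. Choose a smooth compactly supported bump function $\psi: A \to [0, 1]$ with $\psi \equiv 1$ on $\pi_A(K \cap V')$, and set $\phi_t(s) := (1-t)s + t\phi(s)$, which for each $t \in [0, 1]$ is a strictly increasing diffeomorphism of $[0, 2]$ fixing the endpoints. Define
\[
h(a, v) = \bigl(a,\; \phi_{\psi(a)}(|v|)\, v / |v|\bigr) \text{ on } V' \setminus A, \qquad h = \mathrm{id} \text{ on } A \cup (M - V').
\]
Because each $\phi_t$ is the identity near $0$ and near $2$, the map $h$ glues to a global $C^r$-diffeomorphism of $M$; its Jacobian in the normal directions equals $\phi_{\psi(a)}'(|v|) \cdot (\phi_{\psi(a)}(|v|)/|v|)^{m-1} > 0$, confirming the diffeomorphism property. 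The support of $h$ is contained in $supp(\psi) \times \{r_0/2 \leq r \leq 3/2\}$, a compact subset of $M - A$, so $h \in \text{Diff}^r_c(M - A)$.

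Finally I would verify $h(K) \cap V = \emptyset$: for $x \in K \setminus V'$, $h(x) = x \notin V$; for $x = (a, v) \in K \cap V'$, $\psi(a) = 1$ and $|v| \geq r_0$, so the normal radius of $h(x)$ equals $\phi(|v|) \geq \phi(r_0) > 1$, placing $h(x)$ outside $V$. Thus $hFh^{-1} \subset \text{Diff}^r_c(M-V)$, and Proposition~\ref{conju to coame} gives the desired coamenability. I expect the main obstacle to be the careful setup in the neatly embedded case: one must choose $V \subset V'$ so that both respect the collar of $\partial M$ and the radial push lifts to a $C^r$-diffeomorphism across the corners, which requires either working in boundary-compatible coordinates or invoking a corner-respecting tubular neighborhood theorem. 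The non-compactness of $A$ is handled routinely by the cut-off $\psi$, and the rest of the argument is a standard radial-push computation.
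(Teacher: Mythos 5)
Your proof follows the same route as the paper: reduce via Proposition~\ref{conju to coame} to producing, for each finite $F$, a single element $h\in\text{Diff}^r_c(M-A)$ that pushes $K=\operatorname{supp}(F)$ outside the closed tubular neighborhood $V$, and build $h$ as a fiberwise radial expansion in the normal bundle of $A$. Two small respects in which your write-up is more careful than the paper's: you scale by the unit normal $v/|v|$ (the paper's formula $\gamma(x,p)=(x,\psi(|p|)p)$ with $\psi$ equal to the identity near $0$ literally degenerates at the zero section and clearly intends what you wrote), and you cut off in the $A$-direction with a compactly supported bump $\psi\colon A\to[0,1]$, interpolating via $\phi_t=(1-t)\operatorname{id}+t\phi$, so that $h$ is compactly supported even when $A$ is noncompact --- a case the paper's $\gamma$ silently excludes. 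Neither change alters the substance; in all of the paper's applications $A$ is compact.
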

\begin{proof}
Pick a tubular neighborhood \s{U} of \s{A}. Identifying \s{U} with the normal bundle \s{\nu A} of \s{A} equipped with a Riemann metric, \s{U} consists of all the pairs \s{(x,p)} where \s{x\in A,p\in \nu_x}. For any \s{\epsilon>0}, let \s{U_\epsilon} be the subset of all the pairs \s{(x,p)\in U_1} such that \s{|p|<\epsilon}.
Let \s{V:=U_1}

We only need to prove that for each finite set \s{F} of \s{\text{Diff}^r_{c}(M-A)}, there exists \s{\gamma\in \text{Diff}^r_{c}(M-A)} that \s{\gamma(supp(F))\subset M-V}, since then \s{\gamma F\gamma^{-1}\subset \text{Diff}^r_{c}(M-V)} and the results follows from \cref{conju to coame}.

Since the interior of \s{supp(F)} is disjoint from \s{A}, there exists \s{\delta>0} such that   \s{U_\delta}  is disjoint from \s{supp(F)}. Now pick a smooth diffeomorphism \s{\psi:\mr_{\geq0}\ra \mr_{\geq 0}} such that \s{\psi} has the trivial germ at \s{0} and \s{\infty}, and \s{\psi(\delta)>1}, define a diffeomorphism \s{\gamma} on \s{U} as \s{\gamma(x,p)=(x,\psi(|p|)p)}. It's easy to see that \s{\gamma} is compactly supported and orientation-preserving, moreover \s{supp(\gamma)\subset U-A}. Hence extending \s{\gamma} trivially elsewhere we get  \s{\gamma\in \text{Diff}^r_{c}(M-A) } and \s{\gamma(supp(F))\subset M-V}.
\end{proof}

\subsection{Tools in differential topology: approximations, embeddings, isotopies and transversalities.}


Let \s{M,N} be two smooth manifolds possibly with boundaries, where \s{M} is  compact.
Given two family of charts \s{\Phi=(\varphi_i,U_i)_{i\in \Lambda}} of \s{M} and \s{\Psi=(\psi_i,V_i)_{i\in \Lambda}}  of \s{N} and a family \s{K=(K_i)_{i\in\Lambda}} of compact sets \s{K_i\subset U_i} where \s{\Lambda} is finite and \s{\bigcup_{i\in\Lambda} int(K_i)=M}
such that \s{f(K_i)\subset V_i},
we can define a neighborhood 
$$
\mathscr{N}^{r}=\mathscr{N}^{r}(f ; \Phi, \Psi, K, \varepsilon)
$$
to be the set of $C^{r}$ maps $g: M \rightarrow N$ such that for all $i \in \Lambda, g (K_{i} ) \subset V_{i}$ and
$$
 |D^{l} (\psi_{i} f \varphi_{i}^{-1}-\psi_{i} g \varphi_{i}^{-1} )(x) |<\varepsilon
$$
for all $x \in \varphi_{i} (K_{i} ), l=0, \ldots, r$. 
The set of of such neighborhood form a basis of a topology on \s{C^r(M,N)}, called \emph{strong topology} of \s{C^r(M,N)} \cite[Chapter 2.1]{Hir76}. From now on when we talk about topology on \s{C^r(M,N)} we are always referring to this topology. We call the neighborhood \s{\mathscr{N}^r} of this form a \emph{strong neighborhood}.
We can also define a metric on this strong neighborhood \s{\mathscr{N}^r} as $$d_{\mathscr{N}^r}(g,h)=sup_{0\leq l\leq r,i\in \Lambda, x\in\varphi_i(K_i)} |D^{l} (\psi_{i} f \varphi_{i}^{-1} -\psi_{i} g \varphi_{i}^{-1} )(x) |$$
Note that on \s{\mathscr{N}^r} the topology defined by this metric coincides with the strong subspace topology on \s{\mathscr{N}^r}. We simply say a variable \s{g} in \s{C^r(M,N)} approximates \s{f\in C^r(M,N)}, if it approximates to \s{f} in the sense of the strong topology.

The main purpose of the rest of this section is to introduce some classical differential topology results, and eventually prove the main lemma \cref{annuisoto}.

\begin{proposition}\cite[Theorem 2.1.4]{Hir76}\label{emb open and dense}
Given compact smooth manifolds \s{M,N},  let \s{Emb^r(M,N)} be the set of \s{C^r}-embeddings from \s{M} to \s{N}. Then
    \s{Emb^r(M,N)} is open in \s{C^r(M,N)}. 
\end{proposition}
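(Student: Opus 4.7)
The plan is to show that any $C^r$-embedding $f \in \mathrm{Emb}^r(M,N)$ admits a strong neighborhood consisting entirely of embeddings. Since $M$ is compact and $N$ Hausdorff, a continuous injective map $M \to N$ is automatically a homeomorphism onto its image, so a $C^r$-embedding of a compact manifold is exactly an injective $C^r$-immersion. It therefore suffices to prove that the immersion condition and the injectivity condition each define open subsets of $C^r(M,N)$; their intersection is then the desired open neighborhood.

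First, I would show that $C^r$-immersions form an open subset. Fix a finite family of charts $(\varphi_i,U_i)$ of $M$ and $(\psi_i,V_i)$ of $N$ as in the definition of the strong topology, with a compact refinement $K_i\subset U_i$ covering $M$ and $f(K_i)\subset V_i$. The immersion condition at $x\in K_i$ is that the matrix $D(\psi_i f\varphi_i^{-1})(\varphi_i(x))$ has full rank $m=\dim M$; by compactness of $\varphi_i(K_i)$ there is a uniform lower bound on the maximal $m\times m$ minor of this Jacobian. Any $g$ with $|D(\psi_i g\varphi_i^{-1})-D(\psi_i f\varphi_i^{-1})|<\varepsilon$ on $\varphi_i(K_i)$ for $\varepsilon$ small enough relative to this lower bound is again an immersion; this carves out a strong neighborhood $\mathscr{N}_1$ of $f$.

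Second, I would establish injectivity stability. Fix background metrics $d_M,d_N$. Using the immersion property and compactness, there are constants $\eta,c>0$ such that in each chart the mean value inequality gives $d_N(f(x),f(y))\ge c\, d_M(x,y)$ whenever $d_M(x,y)<\eta$, so $f$ is uniformly locally bilipschitz near the diagonal. On the complement $K=\{(x,y)\in M\times M : d_M(x,y)\ge\eta\}$, which is compact, $f\times f$ avoids the diagonal of $N\times N$, hence $d_N(f(x),f(y))\ge\delta$ for some $\delta>0$. Now if $g$ approximates $f$ sufficiently closely in the strong $C^1$-sense, the same mean value argument applied to $g$ preserves a local bilipschitz lower bound $(c/2)\, d_M(x,y)$ on the diagonal neighborhood, while $C^0$-closeness ensures $d_N(g(x),g(y))\ge\delta/2>0$ on $K$. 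Consequently $g$ is globally injective, carving out a strong neighborhood $\mathscr{N}_2$ of $f$ consisting of injective maps. Intersecting $\mathscr{N}_1\cap\mathscr{N}_2$ and invoking the compactness observation above gives the desired open neighborhood of $f$ in $\mathrm{Emb}^r(M,N)$.

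The main obstacle is the uniform local injectivity estimate near the diagonal: pointwise injectivity of $Df$ must be upgraded to a uniform bilipschitz bound that survives a small $C^1$-perturbation. This is where compactness of $M$ is essential, allowing one to choose $\eta$ and $c$ once and for all rather than pointwise, and where one invokes the mean value inequality in chart coordinates to convert a lower bound on $\|Df\|$ into a genuine separation estimate on $f$ itself.
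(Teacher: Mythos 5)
The paper does not prove this proposition but cites it to Hirsch (\emph{Differential Topology}, Theorem~2.1.4), so there is no in-paper argument to compare against. Your blind proof is the standard textbook argument, and it is correct: on a compact domain an embedding is the same as an injective immersion, and you show each of these two conditions carves out a strong-open set. The immersion part is routine (a uniform positive lower bound on the smallest singular value of the local Jacobian on the compact chart pieces, stable under small $C^1$ perturbation); the injectivity part correctly splits $M\times M$ into a diagonal neighborhood, handled by a uniform local bilipschitz estimate from the mean value inequality, and a compact off-diagonal set, handled by a positive separation bound plus $C^0$ closeness. Two small points worth tightening: first, the quantity you actually want bounded below is the smallest singular value of $D(\psi_i f\varphi_i^{-1})$, not literally the ``maximal $m\times m$ minor'' (the latter implies the former only after combining with the uniform upper bound on the Jacobian, via Cauchy--Binet), so it is cleaner to invoke the singular value directly; second, the local bilipschitz bound for the perturbed map $g$ needs the modulus of continuity of $Dg$, which is controlled by that of $Df$ together with the $\varepsilon$-bound on $Dg - Df$, so $\eta$ and $\varepsilon$ must be chosen jointly. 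Neither is a gap, just bookkeeping that should be made explicit if this were to be written out in full. Hirsch's own exposition runs the injectivity step as a sequential compactness/contradiction argument rather than the quantitative bilipschitz estimate, but the underlying ingredients are the same, and your version is arguably more transparent.
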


Suppose that \s{\partial M, \partial N} are non-empty, let \s{C^r(M,N;  \partial)} be the set of \s{C^r}-maps from \s{M} to \s{N} that maps \s{\partial M} to \s{\partial N}, let \s{Emb^r(M,N; \partial)} be the set of neat \s{C^r}-embeddings from \s{M} to \s{N}. They are subspaces in \s{C^r(M,N)}. Then \s{Emb^r(M,N; \partial)} is open in \s{C^r(M,N; \partial)}. In fact,  \s{Emb^r(M,N; \partial)} is the intersection three sets: the set of \s{C^r}-maps that are transversal to \s{\partial N},  \s{Emb^r(M,N)} and \s{C^r(M,N; \partial)}. The first two are open maps.

\begin{proposition}\cite[Theorem 2.2.13]{Hir76}\label{emb dense}
Given smooth manifolds \s{M,N} with \s{M} closed. If \s{dim(N)\geq 2dim(M)+1}, then \s{Emb^r(M,N)} is dense in \s{C^r(M,N)}.
\end{proposition}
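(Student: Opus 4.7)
The plan is to prove density via a two-stage perturbation, combining Thom's jet transversality theorem with a dimension count. First I would perturb the given \s{f\in C^r(M,N)} to a nearby immersion, then perturb that immersion further to an injective one. Since \s{M} is closed, an injective \s{C^r}-immersion is automatically a \s{C^r}-embedding, and \cref{emb open and dense} supplies the openness needed to combine the two perturbations without destroying each other.

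For the first stage, consider the subset \s{\Sigma\subset J^1(M,N)} of \s{1}-jets whose linear part has rank strictly less than \s{m:=\dim M}. Fiberwise this is the locus of \s{n\times m} matrices of rank at most \s{m-1}, which is an algebraic variety of codimension \s{n-m+1}. By Thom's jet transversality theorem, the set of \s{f} whose \s{1}-jet prolongation \s{j^1f:M\to J^1(M,N)} is transverse to \s{\Sigma} is residual in \s{C^r(M,N)}. When \s{n\geq 2m} we have \s{n-m+1>m=\dim j^1f(M)}, so transversality forces \s{j^1 f(M)\cap\Sigma=\varnothing}, that is, \s{f} is an immersion. In particular the subset of immersions is dense (and open) in \s{C^r(M,N)}.

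For the second stage, given an immersion \s{f_0}, consider the auxiliary map \s{\tilde f:M\times M\to N\times N}, \s{(x,y)\mapsto(f(x),f(y))}. Injectivity of \s{f} amounts to \s{\tilde f^{-1}(\Delta_N)\subset\Delta_M}. By the multi-jet transversality theorem applied to \s{\Delta_N\subset N\times N}, a dense set of perturbations \s{f} of \s{f_0} (chosen inside a strong neighborhood lying in the open set of immersions) will have \s{\tilde f} transverse to \s{\Delta_N} on the open manifold \s{M\times M-\Delta_M}. Since \s{\operatorname{codim}\Delta_N=n} and \s{\dim(M\times M-\Delta_M)=2m<n} under the hypothesis \s{n\geq 2m+1}, this transverse preimage is empty, so \s{f} is injective off the diagonal, and therefore globally injective.

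The main obstacle is coordinating the two perturbations so that the second does not break the immersion property obtained in the first. This is standard but must be handled carefully: one shrinks the strong neighborhood used in the second stage so that it lies inside the open subset \s{\mathrm{Imm}^r(M,N)}, which is possible by the openness observed at the end of the first stage; then the density guaranteed by multi-jet transversality within that neighborhood suffices. Combining with \cref{emb open and dense} and the fact that an injective \s{C^r}-immersion from a compact manifold is automatically an embedding (the map is proper, hence a homeomorphism onto its image), we conclude that \s{Emb^r(M,N)} is dense in \s{C^r(M,N)}.
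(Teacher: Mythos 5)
This proposition is cited from Hirsch's book without proof, so there is no in-paper argument to compare against; your proposal is a correct rendering of the standard transversality proof of the easy Whitney embedding theorem, which is in fact the route Hirsch takes (immersions dense via $1$-jet transversality and the codimension count $n-m+1>m$ when $n\geq 2m$; injectivity via multi-jet transversality on $M\times M-\Delta_M$ using $2m<n$; and an injective immersion of a compact manifold is an embedding). One minor point worth making explicit: the jet and multi-jet transversality theorems are cleanest in the smooth category, so for finite $r$ one should first approximate $f$ by a $C^\infty$ map (as in \cref{dense of smooth map}), perturb that to a smooth embedding, and observe that smooth embeddings are in particular $C^r$-embeddings; this gives the stated density in $C^r(M,N)$ for all $1\leq r\leq\infty$.
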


The following shows that if two maps are close enough in \s{C^r(M,N)} then they are homotopic and the homotopy can be chosen within a controlled distance from these maps:

\begin{proposition}\label{isotopy in close map}
Given compact smooth manifolds \s{M,N}.
    Let \s{f\in C^r(M,N)} and \s{\mathscr{N}^r} be a strong neighborhood of \s{f} in \s{C^r(M,N)}. Then : \begin{itemize}
    \item There exists a strong neighborhood \s{\mathscr{N'}^r\subset \mathscr{N}^r}, such that for any \s{f'\in \mathscr{N'}^r}, there is a \s{C^r}-homotopy \s{F:I\times M\ra N} such that \s{F_0=f,F_1=f'} and \s{F_s\in \mathscr{N}^r} for every \s{s\in  I};
        \item If \s{f} is an embedding, then \s{F_s} can be chosen to  be a \s{C^r}-isotopy; 
        \item If \s{f} and \s{f'} coincide at some subset \s{Z\subset M}, then \s{F_s} can also be chosen to coincide with \s{f} at \s{Z}.
    \end{itemize}

\end{proposition}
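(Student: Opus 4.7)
The plan is to manufacture a canonical geodesic homotopy between $f$ and any $C^r$-close $f'$ by means of the Riemannian exponential map on (an extension of) $N$, and then to upgrade $C^0$-closeness to $C^r$-closeness by continuity of the exponential. The three bullets will all be obtained by successively shrinking the strong neighborhood $\mathscr{N}^r$.

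First I would enlarge $N$ across its boundary to a compact smooth manifold $\hat{N}$ without boundary (for instance the double $DN$), and equip $\hat{N}$ with a Riemannian metric of product form in a collar of $\partial N$, so that any sufficiently short geodesic joining two points of $N$ in $\hat{N}$ remains inside $N$. Compactness of $N$ furnishes a uniform injectivity radius $\rho > 0$ such that any two points $p, q \in N$ with $d_{\hat N}(p, q) < \rho$ are joined by a unique minimizing geodesic in $\hat{N}$, and that geodesic lies in $N$. Using continuity of the charts $\psi_i$ entering the definition of $\mathscr{N}^r$, I can shrink $\mathscr{N}^r$ once to a strong neighborhood $\mathscr{N}^r_0$ of $f$ so that $\sup_{x \in M} d_{\hat N}(g(x), f(x)) < \rho$ for every $g \in \mathscr{N}^r_0$.

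For $f' \in \mathscr{N}^r_0$ I would set
\[
F(s, x) \;=\; \exp_{f(x)} \bigl( s \cdot \exp_{f(x)}^{-1}(f'(x)) \bigr),
\]
so that $F_0 = f$, $F_1 = f'$, $F(I \times M) \subset N$, and $F \in C^r(I \times M, N)$. The assignment $f' \mapsto F$ is continuous from the strong topology on $C^r(M, N)$ to the strong topology on $C^r(I \times M, N)$: the chain rule expresses $D^l(\psi_i F_s \varphi_i^{-1})$ as a continuous function of the partial derivatives up to order $l$ of $\psi_i f \varphi_i^{-1}$ and $\psi_i f' \varphi_i^{-1}$ together with the (smooth) exponential map, and when $f' = f$ one has $F_s \equiv f$. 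Hence I can further shrink $\mathscr{N}^r_0$ to a strong neighborhood $\mathscr{N}'^r \subset \mathscr{N}^r$ ensuring $F_s \in \mathscr{N}^r$ for every $s \in I$, which proves the first bullet.

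For the embedding case, when $f$ is an embedding, $\mathrm{Emb}^r(M, N)$ is open in $C^r(M, N)$ by \cref{emb open and dense}, so I can shrink $\mathscr{N}^r$ into $\mathrm{Emb}^r(M, N)$ at the outset; then every $F_s \in \mathscr{N}^r$ is automatically an embedding, making $F$ a $C^r$-isotopy. For the fixed-set statement, if $f|_Z = f'|_Z$ then $\exp_{f(x)}^{-1}(f'(x)) = 0$ for $x \in Z$, forcing $F(s, x) = f(x)$ throughout $Z$. The principal technical issue is to arrange that every geodesic defining $F(s, x)$ stays inside $N$: a product-metric collar near $\partial N$ suffices, and if $\partial N = \varnothing$ no extension of $N$ is required at all.
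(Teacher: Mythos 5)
Your proof is correct, and it takes a genuinely different (intrinsic) route from the paper's extrinsic argument. The paper embeds $N$ into a Euclidean space via Whitney, takes a compact tubular neighborhood $U$ of $N$ with retraction $\pi_N : U \to N$, and defines the homotopy as the projected straight-line homotopy $F_s(x) = \pi_N\big(sf(x) + (1-s)f'(x)\big)$; the $C^r$-estimate $d(F_s, f) \le C(r)\,C(\pi_N)\, d(f, f')$ is then read off from the chain rule applied to the smooth map $\pi_N$ on a compact set. You instead stay inside $N$ (after doubling across $\partial N$ with a product-form collar metric), use the uniform injectivity radius from compactness, and take the geodesic homotopy $F(s,x) = \exp_{f(x)}\big(s \exp_{f(x)}^{-1}(f'(x))\big)$. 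The two constructions are functorially parallel — your exponential map plays exactly the role of the paper's tubular-neighborhood retraction, and in both cases the continuity of $f' \mapsto F$ in the strong $C^r$-topology is a chain-rule consequence of smoothness of a fixed auxiliary map ($\exp$ resp. $\pi_N$) on a compact set, together with $F \equiv f$ at $f' = f$. Your treatment of the boundary case via a product-metric collar is the intrinsic analogue of the paper's neat embedding into a half-space (which the paper states but does not carry out). Bullets two and three are handled identically: openness of $\mathrm{Emb}^r$ for the isotopy statement, and $\exp_{f(x)}^{-1}(f'(x)) = 0$ (resp. $\pi_N(f(x)) = f(x)$) at coincidence points for the relative statement. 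The intrinsic route avoids choosing an ambient embedding, at the cost of setting up the Riemannian data; the paper's route is perhaps closer to the stock Hirsch-style references it later quotes. Both are sound.
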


\begin{proof}
    By the Whitney embedding theorem, there is \s{m\in\mn} such that we can smoothly embed \s{N} into \s{\mr^m} if \s{N} has no boundary or smoothly neatly embed \s{N} into \s{\mr^{m-1}\times \mr_{\geq 0}} if \s{N} has boundary. we only prove the first case where \s{N} has no boundary, the other cases can be done similarly. Now \s{N} has a compact tubular neighborhood \s{U} and let \s{\pi_N:U\ra N} be the projection map. 
    
  Observe \s{C^r(M,N)} is a subspace of \s{C^r(M,\mr^m)}. Moreover by  compactness of \s{M}, the strong topology of \s{C^r(M,N)} actually coincides with the subspace topology in \s{C^r(M,\mr^m)}.
    
    Hence there is a  strong neighborhood \s{\mathscr{K}^{r}=\mathscr{K}^{r}(f ; \Phi, id_{\mr^m}, K, \varepsilon)} of \s{f} in  \s{C^r(M,\mr^m)}, such that \s{\mathscr{K}^r\cap C^r(M,N)\subset \mathscr{N}^r} and for each \s{f'\in \mathscr{K}^r} the line segment \s{[f'(x),f(x)]} is contained in \s{V} for every \s{x\in M}.

    Now we can define the \s{C^r}-homotopy \s{F_s:M\ra N,s\in  I} as follows:$$
    F_s(x)=\pi_N\bigg(sf(x)+(1-s)f'(x)\bigg)
    $$
    By definition we have that \s{d_{\mathscr{K}^r}(sf+(1-s)f'),f)\leq d_{\mathscr{K}^r}(f,f')}. Now if we choose \s{f'} close enough to \s{f},  then \s{d_{\mathscr{K}^r}(F_s,f)=d_{\mathscr{K}^r}(F_s,\pi_Nf)\leq C(r)C(\pi_N)d_{\mathscr{K}^r}(f,f')}. Here \s{C(r)} is a constant that only depends on \s{r}, and \s{C(\pi_N)} is the maximum value of all \s{|D^l (\pi_N)|} restricting to a \s{\epsilon}-neighborhood of \s{f(M)}, hence is finite. so there is strong neighborhood \s{\mathscr{K'}^r} such that whenever \s{f'\in \mathscr{K'}^r}, the defined \s{F_s} will lie in \s{\mathscr{K}^r} for all \s{s\in I}. Now let \s{\mathscr{N}^r=\mathscr{K}\cap C^r(M,N)} and \s{\mathscr{N'}^r=\mathscr{K'}^r\cap C^r(M,N)},
 the first  and the third  statement follow.

    Now the second statement follows from \cref{emb open and dense}.
\end{proof}

Similarly we also have an analogy of \ref{isotopy in close map} in the relative case: If \s{f\in C^r(M,N;\partial )} and  \s{\mathscr{N}^r} is a strong neighborhood of \s{f} in \s{C^r(M,N;\partial)}, then we can choose a subset \s{\mathscr{N}'^r\subset \mathscr{N}^r} open in \s{\subset C^r(M,N;\partial)} so that: for every \s{f'\in \mathscr{N}'^r} there is a \s{C^r}-homotopy  \s{F_s\in C^r(M,N;\partial)} with \s{F_0=f,F_1=f'} and \s{F_s\in \mathscr{N}^r} for every \s{s\in I}; If \s{f} is moreover a neat embedding, then the chosen \s{F_s} is also a neat embedding for every \s{s\in I}. To get the last statement one only need to notice that \s{Emb^r(M,N; \partial)} is open in \s{C^r(M,N; \partial)}.

\begin{proposition}\label{dense of smooth map}
    
Let $M$ and $N$ be compact smooth manifolds. Then $C^\infty(M, N)$ is dense in $C^{r}(M, N)$. Moreover, let \s{A} be a closed subset of \s{M}, if \s{f\in C^{r}(M, N)} is already smooth on a neighborhood \s{U} of  \s{A}, then \s{f} can be approximated by a smooth map \s{f'} that coincides with \s{f} on a neighborhood of \s{A}. 
\end{proposition}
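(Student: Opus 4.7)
The plan is to reduce everything to approximation in Euclidean space via the Whitney embedding theorem, mollify to prove the absolute density statement, and then use a partition-of-unity trick to glue a mollified approximation with \s{f} itself on a neighborhood of \s{A}.

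First I would embed \s{N} into some \s{\mr^m} (or into \s{\mr^{m-1}\times\mr_{\geq 0}} if \s{\partial N\neq \varnothing}), pick a tubular neighborhood \s{W\supset N} with smooth projection \s{\pi_N:W\ra N}, and regard \s{f} as a \s{C^r}-map \s{M\ra \mr^m}. Using a finite atlas of \s{M} and a subordinate smooth partition of unity, I would mollify \s{f} componentwise to obtain a smooth \s{g:M\ra \mr^m} which is arbitrarily \s{C^r}-close to \s{f}; once \s{g} is close enough, \s{g(M)\subset W}, and \s{\pi_N\circ g} is a smooth map into \s{N} approximating \s{f} in the strong topology. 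Compactness of \s{M} ensures that the strong \s{C^r}-topology on \s{C^r(M,N)} coincides with the subspace topology induced from \s{C^r(M,\mr^m)}, exactly as in the proof of \cref{isotopy in close map}. This gives the absolute density statement.

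For the relative statement, I would fix open neighborhoods \s{A\subset V_0\subset \overline{V_0}\subset V_1\subset \overline{V_1}\subset U} and a smooth bump function \s{\rho:M\ra [0,1]} equal to \s{1} on \s{V_0} and \s{0} outside \s{V_1}. Taking a smooth approximation \s{g:M\ra \mr^m} of \s{f} as above, I would set
$$
h(x):=\rho(x) f(x)+\bigl(1-\rho(x)\bigr) g(x).
$$
Then \s{h=f} on \s{V_0}, \s{h=g} off \s{V_1}, and on the transition region \s{V_1\setminus \overline{V_0}\subset U} both \s{f} and \s{g} are smooth, so \s{h} is globally smooth. Since \s{h-f=(1-\rho)(g-f)}, the \s{C^r}-norm of \s{h-f} is controlled by \s{\Vert g-f\Vert_{C^r}} times a constant depending only on the fixed function \s{\rho}; so if \s{g} is taken close enough to \s{f} then \s{h(M)\subset W} and \s{f':=\pi_N\circ h} is a smooth approximation of \s{f} which agrees with \s{f=\pi_N\circ f} on \s{V_0}.

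The only mildly delicate point is the control of derivatives in the interpolation step: expanding \s{D^l\bigl((1-\rho)(g-f)\bigr)} by the Leibniz rule brings in derivatives of \s{\rho} up to order \s{l\leq r}, but since \s{\rho} is chosen once and for all, these contribute only a fixed multiplicative constant. Beyond setting this up carefully, the argument rests only on the Whitney embedding theorem and the standard mollification of Euclidean-valued maps, so no substantial obstacle arises.
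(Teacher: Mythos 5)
Your argument is correct and follows essentially the same route as the paper: cite (or, in your case, spell out via mollification and a tubular neighborhood) the absolute density statement, then interpolate between \s{f} and a global smooth approximation with a bump function supported inside \s{U}, projecting back to \s{N} with \s{\pi_N}. One small note: the paper's displayed formula \s{h(x)=\pi_N(\xi(x)f'(x)+(1-\xi(x))f(x))} with \s{\xi\equiv 1} near \s{A} has the roles of \s{f} and \s{f'} inadvertently swapped; your version \s{h=\rho f+(1-\rho)g} is the intended, correct one.
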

\begin{proof}
The first statement is from \cite[Theorem 2.3.3]{Hir76}. Now we prove the second.
First fix a smooth  map \s{\xi: M\ra I}, such that \s{\xi} is \s{1} on a neighborhood of \s{A} and \s{0} on \s{M-U}. 
  Now look at the function \s{h(x):=\pi_N\bigg(\xi(x)f'(x)+(1-\xi(x)f(x))\bigg)} where \s{\pi_N}  is the same as before.
    Now let \s{f'} vary in \s{C^\infty(M,N)}. Since \s{\pi_N} is smooth and \s{f(M)} is compact, if \s{f'} approximates \s{f}, then \s{h} will also approximate \s{f}. Now \s{h} is smooth by definition, and stays close to \s{f}.
\end{proof}

 Let \s{C^r(M, \text{near } \partial M)} be the subspace of \s{C^r(M,M)} consisting of the elements that is identity near \s{\partial M}, where we give \s{C^r(M, \text{near } \partial M)} the subspace topology of \s{C^r(M,M)}. The following shows the openness of diffeomorphism groups of manifolds with boundary, which is a generalization of the classical theorem for manifolds without boundary \cite[Theorem 2.2.7]{Hir76}.

\begin{lemma}\label{diff op}
  Let  \s{M} be a connected smooth compact manifold.  Then \s{ \text{Diff}^r_c(int(M))} is open in \s{C^r(M,\text{near } \partial M)}, the space of \s{C^r}-self maps of \s{M} with trivial germ at \s{\partial M}.
 
\end{lemma}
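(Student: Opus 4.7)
The plan is to reduce to the classical openness of $\text{Diff}^r(N)$ in $C^r(N,N)$ for a closed smooth manifold $N$ \cite[Theorem 2.2.7]{Hir76}, by passing to the double. Let $DM = M \cup_{\partial M} M'$ denote the double of $M$, equipped with its standard smooth structure via a collar of $\partial M$; then $DM$ is a closed smooth manifold. Define the extension map
\[
E\colon C^r(M,\text{near }\partial M)\longrightarrow C^r(DM,DM),\qquad E(g)=\tilde g,
\]
where $\tilde g|_M = g$ and $\tilde g|_{M'}=\mathrm{id}_{M'}$. The trivial-germ assumption guarantees that $g$ coincides with the identity on some open neighborhood of $\partial M$ in $M$, so the two pieces fit together into a $C^r$-map on all of $DM$. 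Since $f^{-1}$ also has trivial germ at $\partial M$, the extension $\tilde f$ is a $C^r$-diffeomorphism of $DM$ with inverse $\widetilde{f^{-1}}$.

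I would next verify that $E$ is continuous with respect to the strong topologies. Cover $DM$ by charts of three types: those entirely in $int(M)$, those entirely in $int(M')$, and those straddling $\partial M$. On $int(M)$-charts we have $\tilde g - \tilde f = g - f$; on $int(M')$-charts, $\tilde g - \tilde f \equiv 0$; and on straddling charts $\tilde f = \mathrm{id}$ (since $f = \mathrm{id}$ near $\partial M$), while $\tilde g$ equals $g$ on the $M$-side and $\mathrm{id}$ on the $M'$-side, so $\|\tilde g - \tilde f\|_{C^r}$ on such a chart is controlled by $\|g-\mathrm{id}\|_{C^r}$ on the $M$-side and vanishes on the $M'$-side. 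Since $M$ is compact the strong topology is just the uniform $C^r$-topology, so continuity of $E$ is immediate.

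By \cite[Theorem 2.2.7]{Hir76} there is a strong neighborhood $\mathscr{V}$ of $\tilde f$ in $C^r(DM,DM)$ consisting of diffeomorphisms, and $E^{-1}(\mathscr{V})$ is then a strong neighborhood of $f$ in $C^r(M,\text{near }\partial M)$. For any $g \in E^{-1}(\mathscr{V})$ the extension $\tilde g$ lies in $\text{Diff}^r(DM)$; since $\tilde g|_{M'}=\mathrm{id}$ we have $\tilde g(M')=M'$, and bijectivity then forces $\tilde g(M)=M$, whence $g = \tilde g|_M$ is a $C^r$-diffeomorphism of $M$. Its trivial germ at $\partial M$ places $supp(g)$ in a compact subset of $int(M)$, giving $g\in\text{Diff}^r_c(int(M))$. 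The main subtle point is the continuity of $E$ on the straddling charts: although different $g$ are the identity on different (possibly very small) neighborhoods of $\partial M$, the $M'$-side contribution to the $C^r$-norm vanishes identically while the $M$-side contribution is controlled by the ambient strong topology on $C^r(M,M)$, so the varying size of those identity-neighborhoods causes no trouble.
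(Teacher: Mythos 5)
Your proof is correct, but it takes a genuinely different route from the paper's. The paper argues directly: by the openness of embeddings in $C^r(M,M)$ (\cref{emb open and dense}), any $f'$ sufficiently close to $f$ in $C^r(M,\text{near }\partial M)$ is an embedding; since $f'$ is the identity near $\partial M$ and is an immersion between equidimensional manifolds, it is a local diffeomorphism, so $f'(M)$ is open, and being compact it is also closed, whence $f'(M)=M$ by connectedness of $M$. You instead pass to the double $DM$ and reduce to the classical openness of $\text{Diff}^r$ of a \emph{closed} manifold via an extension-by-identity map $E$; the bijectivity of $\tilde g$ together with $\tilde g|_{M'}=\mathrm{id}$ then forces $\tilde g(M)=M$ in place of the paper's connectedness argument. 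Your approach pays a small overhead in setup (constructing $DM$, defining $E$, checking its continuity in the strong topology) but has the advantage of sidestepping the relative openness of embeddings for manifolds with boundary and not using connectedness at all -- the bijectivity of $\tilde g$ does the work instead. The one point to make explicit, which you flag but could state more crisply, is that compactness of $M$ collapses the strong topology to the uniform $C^r$-topology, so the estimate $\|E(g)-E(f)\|_{C^r(DM)} \lesssim \|g-f\|_{C^r(M)}$ (uniform over a fixed finite atlas of $DM$ straddling $\partial M$) is all one needs for continuity of $E$; the varying size of the neighborhoods on which each $g$ is the identity is irrelevant precisely because the $M'$-side contribution is identically zero.
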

\begin{proof}
    Let \s{f\in \text{Diff}^r(M, \text{near } \partial M)} and let \s{f'} vary in \s{C^r(M,\text{near } \partial M)}. When \s{f'} approximates \s{f}, \s{f'} becomes an embedding. Moreover, since \s{f'} is identity near \s{\partial M}, \s{f'} is a local diffeomorphism. Hence \s{f'(M)} is both open and closed in \s{M}, which equals \s{M}. So \s{f'} is a diffeomorphism, as required.
\end{proof}

Hence for \s{1\leq r\leq \infty} every \s{C^r}-embedding can be approximated by a smooth embedding, every \s{C^r}-diffeomorphism can be approximated by a smooth diffeomorphism.

\begin{lemma}\label{openess of isotopy}
Given compact smooth manifolds \s{M,N},
    the set of \s{C^r}-isotopies \s{F:I\times M\ra N} is open in \s{C^r(I\times M,N)}.
\end{lemma}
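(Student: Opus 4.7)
The plan is to reduce the question to the pointwise openness of the embedding space together with the tube lemma. The observation driving the proof is that ``being an isotopy'' is exactly the condition that every slice is an embedding, and since $I$ is compact we can hope to make a pointwise statement uniform in $s$.

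First, I would verify that the slice/evaluation map
\[
\mathrm{ev} : I \times C^r(I \times M, N) \to C^r(M, N), \quad (s, F) \mapsto F_s
\]
is continuous, where all spaces carry the strong $C^r$-topology. Since $I$ and $M$ are compact, the strong topology on each side agrees with the usual uniform $C^r$-topology relative to a finite atlas. Choosing an atlas of $I \times M$ of product form $(\mathrm{id}_I \times \varphi_i, J_j \times U_i)$ and an atlas $(\psi_i, V_i)$ of $N$, the inequality
\[
|D^l(\psi_i F_s \varphi_i^{-1} - \psi_i G_s \varphi_i^{-1})(x)| \;\leq\; |D^l(\psi_i F(\mathrm{id} \times \varphi_i)^{-1} - \psi_i G(\mathrm{id} \times \varphi_i)^{-1})(s, x)|
\]
for $l \leq r$ shows that a strong $\varepsilon$-neighborhood of $F$ in $C^r(I \times M, N)$ is carried into a strong $\varepsilon$-neighborhood of $F_s$ in $C^r(M, N)$, uniformly in $s$. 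Continuity in the $s$-variable follows from the compactness of $M$ and the continuity of $F$ itself up to order $r$.

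Second, by \cref{emb open and dense} the set $\mathrm{Emb}^r(M, N)$ is open in $C^r(M, N)$, so $\mathrm{ev}^{-1}(\mathrm{Emb}^r(M, N))$ is open in $I \times C^r(I \times M, N)$. If $F$ is an isotopy, then $F_s \in \mathrm{Emb}^r(M, N)$ for every $s \in I$, so the whole slice $I \times \{F\}$ lies in this open set.

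Finally, I would invoke the tube lemma: since $I$ is compact, there exists an open neighborhood $\mathscr{U}$ of $F$ in $C^r(I \times M, N)$ such that $I \times \mathscr{U} \subset \mathrm{ev}^{-1}(\mathrm{Emb}^r(M, N))$. In other words, every $G \in \mathscr{U}$ has the property that $G_s$ is a $C^r$-embedding for all $s \in I$, so $G$ is itself an isotopy, as required. The only mildly delicate point is the first step, checking that the slicing map is continuous in the strong topologies; once that is in hand, openness of embeddings plus the tube lemma finishes the argument cleanly.
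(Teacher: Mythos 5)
Your proof is correct and takes essentially the same approach as the paper: both use compactness of $I$ together with openness of $\mathrm{Emb}^r(M,N)$ in $C^r(M,N)$ to produce a tube of isotopies around $F$. The paper constructs the tube directly by covering the path $s \mapsto F_s$ with finitely many embedding-neighborhoods, while you package the identical idea more explicitly via continuity of the slicing map plus the tube lemma, and in doing so you spell out the continuity fact that the paper asserts only implicitly.
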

\begin{proof}
    The isotopy \s{F} gives a continuous path \s{c_F:I\ra Emb^r(M,N)}. We can find a big enough positive number \s{m} and a family of strong neighborhood \s{\{\mathscr{N}^r_i\}_{1\leq i\leq m}} in \s{C^r(M,N)} so that for each \s{1\leq i\leq m}, \s{\mathscr{N}^r_i} contains \s{c_F([\frac{i-1}{m},\frac{i}{m}])} and \s{\mathscr{N}^r_i\cap C^r(M,N)\subset Emb^r(M,N)}, define a subset \s{\mathscr{N}^r\subset C^r(I\times M,N)} as the set of \s{G:I\times M\ra N} such that \s{c_G([\frac{i-1}{m},\frac{i}{m}])\subset \mathscr{N}^r_i} for each \s{1\leq i\leq m}. This is an open subset and each element of it is an isotopy.
\end{proof}

The following theorem allows us to push a map between manifolds so that its image is disjoint from some given disks in the manifold.

\begin{lemma}\label{away from disk}Let \s{M,N} be compact smooth manifolds  with \s{dim N=n,{dim(M)<dim(N)}}.
    Let \s{f:M\ra N} be a \s{C^r}-map. Let \s{K} be the disjoint union of finitely many \s{n}-dimensional \s{C^r}-embedded compact disks lying in  \s{int(N)}. Let \s{U} be any neighborhood of \s{K}. Then there exists \s{g\in \text{Diff}^r_0(N)} with \s{supp(g)\subset int(U)} such that \s{gf(M)} is disjoint from \s{K}. 
\end{lemma}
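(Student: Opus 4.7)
The plan is to push the disks of $K$ off $f(M)$ one at a time by an ambient isotopy, exploiting the fact that $f(M)$ is nowhere dense in $N$. Since the disks $D_1,\dots,D_m$ forming $K$ are pairwise disjoint compact subsets of $\text{int}(N)$ inside the open set $\text{int}(U)$, I would first choose pairwise disjoint, connected, relatively compact open sets $V_i$ with $D_i\subset V_i\subset \text{int}(U)\cap \text{int}(N)$, for example by intersecting a tubular neighborhood of $D_i$ with $\text{int}(U)$. The problem then reduces to producing, for each $i$, a diffeomorphism $g_i\in\text{Diff}^r_0(N)$ supported in $V_i$ with $g_i f(M)\cap D_i=\varnothing$; the composition $g=g_1\cdots g_m$ lies in $\text{Diff}^r_0(N)$, has support in $\text{int}(U)$, and satisfies $g f(M)\cap K=\varnothing$.

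Fix one disk $D=D_i$ inside its connected neighborhood $V=V_i$. The crucial observation is that $f(M)\subset N$ is closed with empty interior: closedness follows because $M$ is compact, and empty interior from the classical fact that the $C^1$-image of a manifold of strictly smaller dimension has Lebesgue measure zero in $N$. Consequently $V\setminus f(M)$ is open and dense in $V$, so I can select a smoothly embedded compact $n$-disk $D'\subset V\setminus f(M)$ — for instance, a small closed ball lying inside some open ball contained in $V\setminus f(M)$.

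It then suffices to construct an ambient $C^r$-isotopy $(h_t)_{t\in I}$ of $N$ with $h_0=\text{id}$, $\text{supp}(h_t)\subset V$ for all $t$, and $h_1(D)=D'$. Once this is in hand, setting $g_i:=h_1^{-1}$ gives $g_i f(M)\cap D=f(M)\cap h_1(D)=f(M)\cap D'=\varnothing$, while by construction $g_i\in\text{Diff}^r_0(N)$ is supported in $V$. The existence of such an isotopy is the classical isotopy-of-disks theorem (Palais, Cerf) applied in the connected $n$-manifold $V$, which is orientable after a further shrinkage to a tubular neighborhood of the contractible disk $D$: any two $C^r$-embedded compact $n$-disks with matching orientation in a connected $n$-manifold are related by a compactly supported ambient $C^r$-isotopy. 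For a more hands-on construction, $(h_t)$ can be built in two stages — first radially contract $D$ to a tiny subdisk $\tilde D$ by the flow of a compactly supported vector field in $V$, then transport $\tilde D$ into $D'$ along a smooth path in $V$ joining their centers, using a vector field supported in a thin tube around that path. The isotopy extension theorem packages these flows into ambient $C^r$-isotopies of $N$ with support inside $V$.

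The main obstacle is ensuring the ambient isotopy stays inside the possibly narrow neighborhood $V$; this is what forces the two-stage construction (shrink first, then translate along a path) rather than a direct translation, and it is why I initially replace $U$ by the slimmer sets $V_i$ around each disk. Once that is taken care of, the measure-zero argument producing the target disk $D'$, the reduction to a single disk, and the passage from the isotopy to a diffeomorphism in the identity component are all routine.
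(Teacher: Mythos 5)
Your proof is correct, and the end result is the same, but you take a genuinely different route from the paper at the key step. After reducing to a single disk $D$ and observing that $f(M)$ is closed and nowhere dense (both proofs share this), you select a disjoint target disk $D'$ \emph{anywhere} in the neighborhood $V$ and then invoke the Palais--Cerf disk isotopy theorem, with a two-stage construction (shrink, then translate along a path) to keep the ambient isotopy supported in the possibly narrow $V$. The paper instead chooses the disjoint disk $K'$ so that it lies \emph{inside the interior of $K$ itself}: since $f(M)\cap K$ is nowhere dense and closed in $K$, one can find a small closed ball $K'\subset\operatorname{int}(K)$ with $K'\cap f(M)=\varnothing$, and then take $g$ to be a radial ``blow-up'' expanding $K'$ to $K$, supported in a collar of $K$ inside $U$. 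That placement makes the required diffeomorphism essentially explicit (a compactly supported radial flow), so the paper avoids both the general disk isotopy theorem and all the bookkeeping about connectedness, orientation, and pushing along a path inside $V$ that your argument has to manage. Your approach is correct and buys generality (it would also work if the target disk had to be chosen far away), but for this statement it uses heavier machinery than necessary; the paper's trick of looking \emph{inside} $K$ for the disjoint disk is the simplification worth noting.
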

\begin{proof}
    For simplicity we only prove the case when \s{K} is a single disk, and the general cases can be proved similarly.

    By definition,\s{f(M)\cap K} is nowhere dense and closed in \s{K}, so there exist another closed disk \s{K'}  lying in the interior of \s{K} such that \s{K'\cap f(M)=\varnothing}. By 'blowing up' \s{K'} to \s{K}, we can easily find \s{g\in \text{Diff}^r_0(int(U))} with compact support in \s{int(U)} such that \s{g(K')=K}. Extending \s{g} trivially elsewhere in \s{M}, we get \s{g\in \text{Diff}^r_0(M)} hence \s{gf(M)\cap K=g\bigg(f(M\cap K')\bigg)=\varnothing}.
\end{proof}

The following transversality theorem allows us to perturb a map between manifolds so that its image is disjoint from some low dimensional embedded submanifolds. Recall that a map \s{f\in C^r(M,N)} is said to transverse to a submanifold \s{A\subset N}, if whenever \s{f(x)=y\in A}, the tangent space of \s{N} at \s{y} is spanned by the tangent space of \s{A} at \s{y} and the image of the tangent space of \s{M} at \s{x}. When we say two maps\emph{ agree on a neighborhood of a set}, we mean their germs on this set are the same.

\begin{lemma}\label{emb and trans} Let \s{M,N} be compact smooth manifolds.
    Let \s{f:M\ra N} be a \s{C^r}-map. Let \s{M'\subset M} be a closed set and \s{A} be a submanifold (or neat submanifold) of \s{N} such that \s{f(M')\cap A=\varnothing}. Then for any strong neighborhood \s{\mathscr{N}^{r}=\mathscr{N}^{r}(f ; \Phi, \Psi, K, \varepsilon)} of \s{f}, there exists   \s{f'\in\mathscr{N}^r} such that \s{f'} and \s{f} have the same germ at \s{M'} and \s{f'} is transverse to \s{A}. 
\end{lemma}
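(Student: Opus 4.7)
The plan is to reduce this relative transversality statement to the classical non-relative Thom transversality theorem by means of a cutoff interpolation, in the style of the proof of \cref{dense of smooth map}. First, I would use that $M'$ is closed (hence compact), $A$ is closed in $N$, and $f(M') \cap A = \varnothing$ to produce nested open neighborhoods $W \subset \overline{W} \subset V \subset \overline{V} \subset U$ of $M'$ in $M$ with $f(\overline{U}) \cap A = \varnothing$. I then fix a smooth bump function $\xi : M \to I$ with $\xi \equiv 1$ on $W$ and $\xi \equiv 0$ outside $V$.

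Next I would invoke the classical Thom transversality theorem: for any strong neighborhood $\mathscr{K}^{r}$ of $f$ in $C^r(M,N)$, there exists $g \in \mathscr{K}^{r}$ which is transverse to $A$ (and to $\partial A$ if $A$ is a neat submanifold). Using a Whitney embedding $N \hookrightarrow \mathbb{R}^{m}$ with associated tubular projection $\pi_N$, exactly as in the proof of \cref{dense of smooth map}, I define
$$f'(x) \;=\; \pi_N\bigl(\xi(x) f(x) + (1-\xi(x)) g(x)\bigr).$$
On $W$ we have $\xi \equiv 1$, so $f' \equiv f$ there, and hence $f'$ and $f$ share the same germ on $M'$. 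Outside $V$ we have $\xi \equiv 0$, so $f' \equiv g$, which is transverse to $A$.

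It remains to check that (i) $f' \in \mathscr{N}^{r}$ and (ii) $f'$ is transverse to $A$ on $V$. For (i), the strong $C^r$-distance from $f'$ to $f$ is bounded by $C \cdot d(f,g)$, where $C$ depends only on $r$, the derivatives of $\pi_N$ on a compact tubular neighborhood of $f(M)$, and the derivatives of $\xi$; this is exactly the estimate used in the proof of \cref{dense of smooth map}. Thus shrinking $\mathscr{K}^r$ sufficiently forces $f' \in \mathscr{N}^r$. For (ii), the compact set $f(\overline{V})$ sits at positive distance from the closed set $A$, so by further shrinking $\mathscr{K}^r$ the $C^0$-estimate gives $f'(\overline{V}) \cap A = \varnothing$, making transversality on $V$ vacuous.

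The main obstacle is purely bookkeeping: one must shrink the initial neighborhood $\mathscr{K}^r$ so that $g$ is simultaneously close enough to $f$ for the interpolate $f'$ to land in $\mathscr{N}^r$ and for $f'(\overline{V})$ to stay off $A$, while still preserving transversality of $g$ to $A$ outside $V$. Each of these three constraints is an open condition on $g$, and their intersection is nonempty by Thom's theorem, so the argument is technical rather than conceptual. The only subtlety worth flagging is that when $A$ is a neat submanifold with boundary, one applies transversality both to $A$ and to $\partial A$ in $\partial N$, which is standard.
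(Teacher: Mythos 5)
Your proposal is correct and follows essentially the same route as the paper's proof: embed $N$ in Euclidean space via Whitney, fix a tubular projection $\pi_N$, apply the classical Thom transversality theorem to obtain a nearby map transverse to $A$, and then interpolate with a cutoff function so that the result agrees with $f$ near $M'$ and equals the transverse map away from a neighborhood of $M'$, with the ``crossover region'' handled by the observation that $f'$ misses $A$ there, making transversality vacuous. Your version is marginally more explicit in two places where the paper is terse: you introduce the nested neighborhoods $W \subset V \subset U$ and take $\xi \equiv 1$ on the open set $W$ (the paper writes $\psi_{M'} \equiv 1$, which read literally only constrains $\psi$ on $M'$ itself and so would not by itself give equality of germs — clearly a neighborhood of $M'$ is intended), and you explicitly flag that for a neat submanifold one applies transversality to $\partial A$ in $\partial N$ as well. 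One small imprecision: you say the ``three constraints\ldots their intersection is nonempty by Thom's theorem,'' but only the first two are open conditions; the third is density of transversal maps, and the correct phrasing is that a dense set meets any nonempty open set. This is cosmetic — the argument is sound.
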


\begin{proof}
As before we assume \s{N} is embedded into a Euclidean space (or a half Euclidean space).
Pick a neighborhood \s{U} of \s{M'} such that \s{\overline{f(U)}} is disjoint from \s{A}. 
There is no harm to assume \s{\mathscr{N}^r} to be smaller, so we assume \s{\overline{g(U)}} is disjoint from \s{A} for every \s{g\in \mathscr{N}^r}.
Pick a smooth map \s{\psi:M\ra  I} such that \s{\psi_{M'}\equiv1} and \s{\psi_{M-U}\equiv 0}. 
Pick a strong neighborhood \s{\mathscr{N}'^r\subset C^r(M,N)} for \s{\mathscr{N}^r} as in \cref{isotopy in close map}.

By the transversality theorem \cite[Theorem 3.2.1]{Hir76}, the set of maps  that are transverse to \s{A} on \s{M} is dense in \s{C^r(M,N)}, so there exists \s{f_1\in\mathscr{N'}^r} transversal to \s{A}. 
    Now define \s{f'(x):=\pi_N\left(\psi(x)f(x)+(1-\psi(x))f_1(x) \right)} where \s{\pi_N} is the same as in \cref{isotopy in close map}. Since \s{\pi_N} is smooth and \s{f(M)} is compact, if \s{f_1} approximates \s{f} in \s{C^r(M,N)}, then the map \s{x\mapsto\psi(x)f(x)+(1-\psi(x))f_1(x)} also approximates \s{f} in \s{C^r(M,\mr^m)}, thus \s{f'} approximates \s{f}. So by choosing \s{f_1} close enough to \s{f} in \s{C^r(M,N)}, we may assume \s{f'\in \mathscr{N}^r}.
Now by definition, we have \s{\overline{f(U)}} is disjoint from \s{A} and \s{f'_{M-U}} is transverse to \s{A}, hence \s{f'} is transverse to \s{A}.
\end{proof}

Let \s{M} be a compact smooth manifold. Given a isotopy \s{F: I\times V\ra M },  we can define an embedding 
$\hat{F}: I\times V\ra I\times M$ as
$\hat{F}(t,x)  \mapsto\bigg(t,F(t,x)\bigg)$. The following lemma is a variant of the classical isotopy extention lemma that allow us to construct an ambient isotopy with controlled support:

\begin{lemma}\label{iso-ex}
    Let \s{M} be a smooth manifold and 
let $V \subset M$ be a compact neat submanifold. And let  $F: I\times V \rightarrow M$ an \s{C^r}-isotopy of $V$ such that \s{F(I\times int(V))\subset int(M)} and $F(I\times \partial V) \subset \partial M$ and \s{F_{t\times V}} is transversal to \s{\partial M} for each \s{t\in M}, \ie, \s{F} is a \s{C^r}-map such that \s{F_s} is a neat \s{C^r}-embedding for each \s{s\in I}. Let \s{U} be a neighborhood of \s{supp(F)}, then $F$ extends to a \s{C^r}-diffeotopy of $M$ having compact support in \s{U}.
\end{lemma}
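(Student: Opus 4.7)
The plan is to prove this as a controlled version of the classical isotopy extension theorem (cf.\ Hirsch, Theorem 8.1.3), using the standard trick of converting the isotopy into a vector field on $I\times M$ via the ``track'' embedding $\hat{F}$, and then localising the extension with a bump function supported inside $U$.

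First I would consider $\hat{F}\colon I\times V \to I\times M$ defined by $\hat{F}(t,x)=(t,F_t(x))$. Since each $F_t$ is a neat $C^r$-embedding and the time coordinate is preserved, $\hat{F}$ is itself a neat $C^r$-embedding of the manifold with corners $I\times V$ into the manifold with corners $I\times M$; moreover the image $\Sigma:=\hat{F}(I\times V)$ is compact. On $\Sigma$, the pushforward $\hat{F}_{*}(\partial/\partial t)$ defines a $C^{r-1}$ vector field whose first (time) component is identically $1$, and whose spatial component at $\hat{F}(t,x)$ is $\partial F_t/\partial t(x)$. This vector field is tangent to $I\times\partial M$ along $\hat{F}(I\times\partial V)$, by neatness.

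Next I would choose a compact neighborhood $U'$ of $\mathrm{supp}(F)$ with $U'\subset U$. Using a tubular neighborhood of $\Sigma$ in $I\times M$ (compatible with the corner structure along $I\times \partial M$), extend the above vector field to a $C^{r-1}$ vector field $\widetilde{W}$ on a neighborhood of $\Sigma$ in $I\times M$, still with constant first component $1$ and still tangent to $I\times \partial M$. Pick a smooth cutoff $\lambda\colon I\times M\to [0,1]$ which equals $1$ on a neighborhood of $\Sigma$, is supported in $I\times U'$, and makes sense on the domain of $\widetilde{W}$; let $Y$ be the spatial component of $\lambda\widetilde{W}$, extended by zero outside a neighborhood of $\Sigma$. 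Then
\[
X \;:=\; \tfrac{\partial}{\partial t} \;+\; Y
\]
is a compactly-supported-in-space $C^{r-1}$ vector field on $I\times M$, tangent to the boundary $I\times\partial M$, whose time component is $1$, which agrees with $\hat{F}_{*}(\partial/\partial t)$ on $\Sigma$, and whose spatial support is contained in $I\times U'$.

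Finally I would integrate $X$. Completeness is free because $Y$ has compact support in the $M$-direction and the time component is $1$, so the flow $\Psi_s$ of $X$ is defined on all of $I\times M$ for $s\in I$. The form of $X$ guarantees that $\Psi_s(t,y)=(t+s,\psi_{t,t+s}(y))$ for a two-parameter family of $C^r$-diffeomorphisms; setting $\Phi_s:=\psi_{0,s}\colon M\to M$ gives a $C^r$-diffeotopy of $M$. Because $X$ restricts to $\hat{F}_{*}(\partial/\partial t)$ on $\Sigma$, the curves $s\mapsto (s,F_s(x))$ are integral curves of $X$, so $\Phi_s\circ F_0=F_s$, and $\Phi_s$ is the identity wherever $Y$ vanishes, which in particular contains $M\setminus U'\subset M\setminus U$; hence $\mathrm{supp}(\Phi_s)\subset U'$ is compact and contained in $U$.

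The main obstacle is the extension step together with the cutoff: one needs to extend $\hat F_{*}(\partial/\partial t)$ to an ambient vector field whose time component stays identically $1$ and which is tangent to $I\times\partial M$, and then localise it via $\lambda$ without disturbing its values on $\Sigma$. Once the tubular neighborhood (respecting corners) is chosen so that the projection onto $\Sigma$ is $C^r$, this is a standard partition-of-unity/cutoff manoeuvre, but it is the place where the neatness hypothesis on $F$ is actually used.
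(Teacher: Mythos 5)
Your proposal follows essentially the same route as the paper: both pass to the track embedding $\hat{F}(t,x)=(t,F_t(x))$, use the tubular neighborhood of $\Sigma=\hat{F}(I\times V)$ in $I\times M$ (adapted to the corner structure, which is where neatness enters) to extend the tautological vector field with constant time-component, cut it off inside $I\times U$ with a bump function while preserving tangency to $I\times\partial M$, and integrate. The paper's proof is simply a terser version that defers all bookkeeping to Hirsch (Theorem 8.1.4) and only records the two modifications — the neat/corner tubular-neighborhood point and the support control — so the two arguments are the same in substance.
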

\begin{proof}
    The proof is almost the same with the proof in \cite[Theorem 8.1.4]{Hir76}, we just cite it and make a slight modification here.
    
We start from the vector field $X$ on $\hat{F}(I\times V)$ tangent to the arcs $\hat{F}(I\times  x)$. The condition of \s{F} in the lemma ensures that \s{\hat{F}(I\times  x)} is a neat embedding into a manifold with corners. Thus it gives us a tubular neighborhood of \s{\hat{F}(I\times  x)} in \s{I\times M}. 
By means of a partition of unity, the horizontal part of $X$ is extended to a vector field $Y$ with support in \s{I\times U} on a neighborhood of $\hat{F}(I\times V)$ in $ I\times M$. The hypothesis on $F$ allows us to assume that $Y_{(t,x)}$ is tangent to $ I\times (\partial M)$ whenever $x \in \partial M$. After restricting to a smaller neighborhood, the horizontal part of $Y$ is extended to a time-dependent vector field $G$ on $M$ with support in \s{U}. The diffeotopy generated by $G$ completes the proof.
\end{proof}

Let \s{n\geq 4}.
  In the following we view \s{0} as the origin of \s{\mr^n} and \s{S^{n-1}} is viewed as \s{\mr^{n-1}\cup\{\infty\}}. 
  Given a oriented smooth manifold \s{M}, let \s{Fr^+(M)} be the oriented \s{n}-frame bundle of the tangent bundle \s{TM}. Then there is a fiber bundle $
  GL^+_n(\mr) \hookrightarrow  Fr^+(M) \twoheadrightarrow M$. Hence we have an exact sequence of groups: $
  \pi_2(M)\ra \pi_1(GL^+_n(\mr))\ra   \pi_1(Fr^+(M)) \ra \pi_1(M)\ra 1$. If \s{M} is simply connected,  then \s{\pi_1(Fr^+(M))\leq \pi_1(GL^+_n(\mr))=\mz/2\mz}. Hence \s{Aut(\pi_1(Fr^+(M)))} is trivial and the induced \s{\text{Diff}^r_+(M)}-action on \s{\pi_1(Fr^+(M))} is trivial. Moreover, this means given any path \s{c: I\ra Fr^+(M)} and any \s{f\in \text{Diff}^r_+(M)} such that \s{f_*c(x)=c(x)} when \s{x\in \{0,1\}}, then \s{f_*c} is homotopic to \s{c} rel \s{\{0,1\}}. Now if moreover both \s{im(c)} and \s{im(fc)} are contained in a contractible subset \s{U} of \s{M}, look at maps \s{pc: I\xrightarrow{c} Fr(U)\cong U\times GL^+_n(\mr)\xrightarrow{p} GL^+_n(\mr)} and \s{pf_*c}, we find \s{pc} is homotopic to \s{pf_*c} rel \s{\{0,1\}}.

Now we are ready to prove our main lemma
which will be used to prove \cref{transitivity of orbit}. We define an arc in a manifold as the image of a smooth embedding from \s{I} to this manifold that is transversal to the boundary, it may or may not be a neat embedding.

\begin{lemma}[Main lemma]\label{annuisoto}
    Let \s{f\in \text{Diff}^r_{c}(S^{n-1}\times I)} and \s{K\subset S^{n-1}\times I} be such that:

    \begin{itemize}
       
        \item \s{K} is a union of finitely many pair-wise disjoint \s{n}-dimensional smoothly embedded compact disks lying in the interior and finitely many  arcs in \s{S^{n-1}\times I};
        \item \s{K} is disjoint from \s{0\times I\cup f(0\times I)}.
    \end{itemize} 
    Then there is  \s{\varphi\in \text{Diff}^r_{c}(S^{n-1}\times I)} with the same germ as \s{f} on \s{0\times I} and with the trivial germ at \s{K}.
   
\end{lemma}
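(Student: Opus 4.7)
The plan is to realize $\varphi$ as the time-$1$ map of a compactly supported ambient $C^r$-diffeotopy of $S^{n-1}\times I$ obtained from \cref{iso-ex} applied to a neat isotopy of a tubular neighborhood of $\alpha:=0\times I$. Choose a thin closed tubular neighborhood $V$ of $\alpha$ such that $V\cap K=\varnothing$ and $f(V)\cap K=\varnothing$; this is possible because $K$ is closed and disjoint from the compact set $\alpha\cup f(\alpha)$. The task reduces to constructing a neat $C^r$-isotopy $F:I\times V\to S^{n-1}\times I$ from $F_0=\iota_V$ to $F_1=f|_V$, equal to the inclusion near the two boundary endpoints $(0,0)$ and $(0,1)$, such that $F_s(V)\cap K=\varnothing$ for every $s\in I$. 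Then \cref{iso-ex}, applied with a neighborhood of the track $\bigcup_s F_s(V)$ still disjoint from $K$, produces an ambient diffeotopy $\Phi_s\in\text{Diff}^r_c(S^{n-1}\times I)$ whose time-$1$ map is the desired $\varphi$.

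First I would build the core arc isotopy. Simple connectivity of $S^{n-1}\times I$ supplies a smooth homotopy $H:I\times I\to S^{n-1}\times I$ from $\alpha$ to $f\alpha$, rel endpoints. Applying \cref{emb and trans} with $M=I\times I$, $M'=\partial(I\times I)$ (where $H$ already avoids $K$), and $A$ the union of the arcs in $K$, the dimension count $\dim(M)+\dim(A)=2+1<n$ (valid for $n\geq 4$) forces the perturbed $H$ to be disjoint from the arcs of $K$. Next, pick a small neighborhood $U_0$ of the top-dimensional disks in $K$ that is disjoint from $\alpha\cup f\alpha$ and from the arcs of $K$, and use \cref{away from disk} to obtain $g\in\text{Diff}^r_c(S^{n-1}\times I)$ with $\mathrm{supp}(g)\subset U_0$ such that $g\circ H(I\times I)$ misses the disks. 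Since $g$ is the identity on $\alpha\cup f\alpha$ and on the arcs of $K$, the new homotopy $g\circ H$ preserves the boundary values and is disjoint from all of $K$.

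Next I would promote this homotopy to an isotopy of tubular neighborhoods. Form $\hat H:I\times I\to I\times(S^{n-1}\times I)$ by $(s,t)\mapsto(s,H(s,t))$; since the domain has dimension $2$ and the target has dimension $n+1\geq 5$, an $s$-preserving perturbation supplied by \cref{emb dense}, together with the openness of embeddings (\cref{emb open and dense}) and of isotopies (\cref{openess of isotopy}), turns each slice $H_s$ into a $C^r$-embedding while keeping the image off $K$; denote the resulting isotopy of arcs by $G$. To thicken $G$ to an isotopy of tubular neighborhoods, I invoke the framing discussion preceding the lemma: $S^{n-1}\times I$ is simply connected, so $\pi_1(Fr^+(S^{n-1}\times I))$ is at most $\mathbb{Z}/2$ and $f$ acts trivially on it, hence the framing of $\alpha$ inherited from $V$ and the framing of $f\alpha$ inherited from $f(V)$ are homotopic rel endpoints as lifts to $Fr^+$. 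Combining this framing homotopy with $G$, and shrinking the radius of $V$ if needed so that tubes of a uniform width around each $G_s(I)$ still miss the closed set $K$, yields the required neat isotopy $F_s$ of $V$.

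The main obstacle is this last framing/thickening step: it is exactly where simple connectivity of $S^{n-1}\times I$ and the triviality of the $\text{Diff}^r_+$-action on $\pi_1(Fr^+)$ are indispensable, so that the normal-bundle data at $\alpha$ and at $f\alpha$ match up compatibly along an arc-level isotopy. The hypothesis $n\geq 4$ is also used twice earlier: in the transversality step that pushes the $2$-dimensional homotopy off the $1$-dimensional arcs of $K$, and in the level-preserving embedding approximation that turns the homotopy into an isotopy. With $F$ in hand, \cref{iso-ex} provides $\Phi_s\in\text{Diff}^r_c(S^{n-1}\times I)$ with support disjoint from $K$, and setting $\varphi:=\Phi_1$ gives $\varphi|_V=f|_V$ (so that $\varphi$ and $f$ have the same germ on $0\times I$) together with $\varphi=\mathrm{id}$ near $K$, as required.
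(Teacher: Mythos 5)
Your outline has the same skeleton as the paper's proof: construct an arc isotopy from $0\times I$ to $f(0\times I)$ using simple connectivity, push it off the arcs of $K$ by transversality and off the disks of $K$ via \cref{away from disk}, invoke the triviality of the $\text{Diff}^r_+$-action on $\pi_1(Fr^+(S^{n-1}\times I))$ to control the framing, and finish with \cref{iso-ex}. All these ingredients appear in the paper, so the strategy is right. However, there is a genuine gap in the last step.

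The sentence ``Combining this framing homotopy with $G$, and shrinking the radius of $V$\ldots yields the required neat isotopy $F_s$ of $V$'' hides the hardest part. A framing homotopy is \emph{first-order} data: it tells you that the normal frame of $\alpha$ coming from $\iota_V$ and the normal frame of $f\alpha$ coming from $f|_V$ are homotopic rel endpoints as paths in $GL^+_n(\mathbb{R})$. Thickening the arc isotopy $G_s$ using such a homotopy produces a family of tubular embeddings $F_s$ of $V$ whose time-$1$ map agrees with $f$ \emph{only to first order} along $0\times I$; it will not, in general, have the same germ as $f$ at $0\times I$. But the lemma demands germ agreement, and that is exactly what $\Phi_1|_V=F_1$ must deliver. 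The paper supplies the missing bridge with two explicit constructions in Step~3: the isotopy $Q_s$ reduces the $GL^+_{n-1}$-block of $Df_{(0,t)}$ to the identity (this is where the framing homotopy $P_s$ is actually consumed), and then, crucially, the straight-line isotopy $J_s=(1-s)\,\mathrm{id}+s f$ is shown to be an immersion near $0\times I$ precisely because the linear block has already been normalized to $I_{n-1}$. Restricting $J_s$ to a small tube and extending ambiently is what upgrades first-order agreement to germ agreement. Your proposal needs an explicit argument of this kind at the end; as written, the interpolation from ``same frame'' to ``same germ'' is asserted rather than proved, and without normalizing the linear block first (the $Q_s$ step), a naive straight-line interpolation $(1-s)\iota_V+s f$ can fail to be an immersion. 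You also skip the reduction to smooth germ at $0\times I$ (Step~1 of the paper), which is a lighter but still necessary preliminary for the $C^r$, $r<\infty$, case.

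A secondary issue: you want to perturb each slice $H_s$ to an embedding while holding $s=0,1$ and the endpoints of $I$ fixed. \cref{emb dense} by itself does not give a parametric, boundary-relative statement, so this step needs a more careful argument (the paper sidesteps it by building the arc isotopy $G$ explicitly rather than by perturbing a generic homotopy). This is repairable, but as stated it is not an application of the cited result.
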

\begin{proof}
\textbf{Step 1: Reduce to the special case where \s{f} has a smooth germ at \s{0\times I}.}
Choose a compact neighborhood \s{U} of \s{0\times I} such that \s{f(U)\cap K=\varnothing} and a smooth function \s{\xi:S^{n-1}\times I\ra [0,1]} that is \s{1} on a neighborhood of \s{0\times I} and \s{0} outside \s{U}. For any \s{g\in C^\infty(S^{n-1}\times I, S^{n-1}\times I)} we define
$$h(x)=\pi_{S^{n-1}\times I}\bigg(\xi(x)f(x)+(1-\xi(x))g(x)\bigg)$$ 
where \s{\pi_{S^{n-1}\times I}} is the projection map from some tubular neighborhood of \s{S^{n-1}\times I} in some Euclidean space, the same notation as in \cref{isotopy in close map}.
Now let \s{g} vary. By \cref{dense of smooth map}, we can let \s{g} approximate \s{f} while having the trivial germ at \s{S^{n-1}\times \{0,1\}}. Meanwhile, \s{h} also approximates \s{f} and becomes a \s{C^r}-diffeomorphism by \cref{diff op} . Moreover, \s{h} and \s{f} have the same germ at \s{K},  \s{h} has a smooth germ at \s{0\times I} and \s{h(U)} is disjoint from \s{K}.
Therefore  \s{hf^{-1}} has the trivial germ at \s{K}.
So be composing \s{hf^{-1}} on \s{f}, we may assume \s{f} has the smooth germ at \s{0\times I}.

\textbf{Step 2: Reduce to the special case where \s{f} fixes \s{0\times I}.} In the rest of the proof we assume all of the maps mentioned are smooth unless otherwise specified.
    Let \s{\pi:S^{n-1}\times I\ra S^{n-1}}, \s{\pi':S^{n-1}\times I\ra I} be the projections, and \s{f_I} be  the restriction of \s{f} on \s{0\times I}. Since by definition \s{f} has the trivial germ at \s{S^{n-1}\times \{0,1\}}, we may assume $$f_I(t)=(0,t)\in S^{n-1}\times I, t\in [0,\epsilon]\cup [1-\epsilon,1] $$ for some \s{\epsilon>0}. Now  \s{\pi f_{[\epsilon,1-\epsilon]}} can be approximated by an   map \s{c:I\ra S^{n-1}} such that: \begin{itemize}
        \item \s{c_{[\epsilon,1-\epsilon]}} is an embedding;
        \item \s{c(t)=0} when \s{t\in [0,\epsilon]\cup [1-\epsilon,1] };
        \item  All derivatives of \s{c} at \s{\{\epsilon,1-\epsilon\}} are \s{0};
        \item \s{\infty\notin im(c)}.
    \end{itemize} 

    The map \s{c} can be constructed as follows:
    Since \s{\pi f_{0\times [\epsilon,1-\epsilon]}} is an loop in \s{S^{n-1}} and \s{n-1\geq 3},
    by \cref{emb dense} it's approximated  by an  embedding of \s{S^1}. We use a local translation of \s{\mr^{n-1}} acting on \s{c} to make sure the starting point of the circle is mapped to \s{0}. Next perturb \s{c} so that it is transversal to hence disjoint from \s{\infty}.
    Then we reparameterize this embedded circle so that its derivatives at the starting point is \s{0}. Finally this embedded loop gives us \s{c_{[\epsilon,1-\epsilon]}} and we are done.
    
    This means \s{f_I} is approximated by an embedding  \s{f':t\mapsto \bigg(c(t),\pi'f(t)\bigg)}. By \cref{isotopy in close map} there is an isotopy \s{F_s:I\ra S^{n-1}\times I} such that \s{F_0=f_I, F_1=f'}. Moreover, by the choice of \s{F_s} we can moreover assume that \s{F_s(t)=(0,t)} when \s{t\in [0,\epsilon]\cup [1-\epsilon,1] }. 

 Now since \s{\infty\notin im(c)}, we can define an map \s{G:[0,3]\times I \ra \mr^{n-1}\times I} as follows: $$
 G_s(t)=\begin{cases}
     F_{\phi(s)}(t)  &s\in  [0,1]\\
     \bigg( c(t), \phi(s-1)t+\Big(1-\phi(s-1)\Big)\pi'f_I(t) \bigg) &s \in [1,2]\\
     (\phi(3-s)c(t),t)  &s\in[2,3]
 \end{cases}
 $$  
 where \s{\phi:I\ra I} is a   map such that \s{\phi_{[0,\frac{1}{3}]}\equiv0} and \s{\phi_{[\frac{2}{3},1]}\equiv1}. We have \s{G_0=f_I} and \s{G_3(t)=(0,t)} for all \s{t\in I}. Since \s{G_s} is injective  for every \s{s\in[0,3]}, \s{G} is a diffeotopy.
 
Now we want to make the image of \s{G} disjoint from \s{K}.
 By \cref{away from disk} we can compose \s{G} with  a diffeomorphism \s{g} with support disjoint from \s{im(G_0)\cup im(G_3)}, such that \s{im(gG)}  is disjoint from  all disks in \s{K}, hence without the loss of generality we may assume \s{im(G)} is disjoint from the disks in \s{K}. Since \s{G(\partial [0,3]\times I)} is disjoint from \s{K}, 
 by \cref{emb and trans} and \cref{openess of isotopy} we could moreover perturb the map \s{G} in the interior of \s{[0,3]\times I} so that \s{G} is transverse to  the arcs in \s{K}, and still disjoint from disks in \s{K}. Since \s{[0,3]\times I} is 2-dimensional and arcs in \s{K} are 1-dimensional and \s{n\geq 4}, we get \s{im(G)} is disjoint from arcs in \s{K}. In conclusion, we may assume that \s{im(G)} is disjoint from \s{K}.
 
 Now  for every \s{s\in[0,3]}, \s{G_s} and the standard embedding \s{t\mapsto (0,t)} have the same germ at the endpoints. Therefore by \cref{iso-ex} we can extend \s{G} to become  an ambient isotopy \s{H:[0,3]\times (S^{n-1}\times I)\ra S^{n-1}\times I}  such that for every \s{s\in [0,3]}, \s{G_s=H_sf_I} and \s{supp(H)} is disjoint from \s{K\cup S^{n-1}\times \{0,1\}}. Hence we have \s{H_3f_I(t)=(0,t)}, now by composing with \s{H_3^{-1}} we may assume \s{f} fixes \s{0\times I}.

\textbf{Step 3: Reduce to the special case where \s{f} has the trivial germ at \s{0\times I}.}
Let \s{\mathscr{P}} be the set of \s{n\times n} matrices with real coefficients of the form $$
\begin{pmatrix}
    A&B\\
    0&1\\
\end{pmatrix}
$$
where \s{A\in GL_{n-1}^+(\mr)} and \s{B\in M_{{(n-1)}\times 1}(\mr)}.

The canonical inclusion \s{GL^+_{n-1}(\mr)\ra \mathscr{P}\ra GL^+_{n}(\mr)} induces the isomorphisms between the fundamental groups.
 Since the induced \s{\text{Diff}^r_+(S^{n-1}\times I)}-action on \s{\pi_1(S^{n-1}\times I)} is trivial, the path defined by \s{p(f):I\ra \mathscr{P},t\mapsto Df_{(0,t)}} is homotopic rel \s{\{[0,\epsilon]\cup[1-\epsilon,1]\}} to the constant map \s{t\mapsto I_n}. Hence there is a homotopy \s{P_s:I\ra \mathscr{P},s\in I} such that \s{P_0\equiv I_n}, and \s{P_1(t)=p(f)} and \s{P_s(t)\equiv I_n} when \s{t\in [0,\epsilon]\cup [1-\epsilon,1] }.   Suppose \s{P_s(t)=\begin{pmatrix}
     A_s(t)&B_s(t)\\
    0&1\\
 \end{pmatrix}}.
 Define a map \s{Q: I\times (D^{n-1}\times I)\ra \mr^{n-1}\times \mr } as follows:$$
 Q_s(y,t)=(A_s(t)y,t), s\in I, (y,t)\in D^{n-1}\times I
 $$

For each \s{s\in I}, \s{(DQ_s)_{(0,t)}=\begin{pmatrix}
     A_s(t)&0\\
    0&1\\
\end{pmatrix}}, hence \s{Q_s} is an immersion at \s{0\times I}. Moreover it fixes \s{0\times I}, and is the identity inclusion on a neighborhood of \s{D^{n-1}\times \{0,1\}}. We claim \s{Q_s} is an embedding at some neighborhood of \s{0\times I}: Otherwise we have two sequences \s{(y_m,t_m)_{m\in \mn},(y'_m, t'_m)_{m\in\mn}} in \s{D^{n-1}\times I} such that \s{y_m, y'_m} tends to \s{0} with \s{Q_s(y_m,t_m)=Q_s(y'_m,t'_m)} for all \s{m\geq 0}. Moreover by taking subsequences we may assume both \s{(y_m,t_m)} and \s{(y'_m,t'_m)} converge to some point in \s{0\times I}. Now the points they converge must be the same since the limit points have the same \s{Q_s}-images, but this contradicts to the local injectivity of \s{Q_s}.

For each \s{s\in I}, let \s{\epsilon(s)} be the supreme of the all radius \s{\epsilon} such that \s{Q_s} restricts to an embedding from \s{\epsilon D^{n-1}\times I} to \s{\mr^{n-1}\times I}, where \s{\epsilon D^{n-1}} is the standard \s{(n-1)}-dimensional ball with radius \s{\epsilon} centered at \s{0} in \s{\mr^{n-1}} (We will keep this notation). Observe that \s{\epsilon: I\ra \mr_{>0}} is lower semi-continuous, hence it has a minimum.
Thus we may pick a  \s{\delta>0} such that \s{Q_s:D^{n-1}_\delta\times I\ra S^{n-1}\times I} is an isotopy for all \s{s\in I}, and \s{Q_s(D^{n-1}_\delta\times I)}
is disjoint from \s{K\cup S^{n-1}\times \{0,1\}}. Now
we can extend \s{Q_{I\times (D^{n-1}_\delta\times I)}} to be an ambient isotopy (still denoted by \s{H}) on \s{S^{n-1}\times I} such that \s{supp(H)} is disjoint from \s{K\cup S^{n-1}\times \{0,1\}}. Observe that
 $D(H_3^{-1}f)_{(0,t)}=\begin{pmatrix}
    I_{n-1}&B_s(t)\\
    0&1\\
\end{pmatrix}$. By composing with \s{H^{-1}_3} we may assume \s{Df_{(0,t)}=\begin{pmatrix}
    I_{n-1}&B_s(t)\\
    0&1\\
\end{pmatrix}}.

Now define a map \s{J:I\times (D^{n-1}\times I)\ra \mr^{n-1}\times \mr} as follows: $$
J_s(y,t)=(1-s)(y,t)+sf(y,t),  s\in I, (y,t)\in D^{n-1}\times I
$$

We have \s{(DJ_s)_{(0,t)}=\begin{pmatrix}
    I_{n-1}&sB_1(t)\\
    0&1\\
\end{pmatrix}}, hence \s{J_s} is an immersion at \s{0\times I}, fixing \s{0\times I} and is the identical inclusion on a neighborhood of \s{D^{n-1}\times \{0,1\}}. Now analogously we can pick a neighborhood of \s{0\times I} such that \s{J_s} becomes an isotopy when restricted to this neighborhood.
And we can extend this restriction of \s{J} to an ambient isotopy on \s{S^{n-1}\times I} (still denoted by \s{J}), such that \s{J_s} is identity near \s{K\cup  S^{n-1}\times \{0,1\}}. 
Now let \s{\varphi:=J_1}. By construction, \s{\varphi} has the same germ with \s{f} on \s{0\times I} and \s{supp(\varphi)} is disjoint from \s{K\cup S^{n-1}\times \{0,1\}}, as required.

\end{proof}

The following statement, will be used to prove that the stabilizer of our group action is boundedly acyclic in \cref{stab ba}.

\begin{lemma}\label{iso of arc in Dn}
   Let \s{\alpha,\beta: I\ra \mr^{n-1}\times I} be two neat \s{C^r}-embeddings .   Let \s{K\subset \mr^{n-1}\times I}  be a set such that \begin{itemize}
        \item \s{K} is disjoint from \s{im(\alpha)} and \s{im(\beta)};
        \item  \s{K} is the union of  finitely many pair-wise disjoint \s{n}-dimensional smoothly embedded compact disks lying in the interior and finitely many  arcs in \s{\mr^{n-1}\times I}. 
    \end{itemize}
    
    Then there exists \s{\lambda \in \text{Diff}^r_0(\mr^{n-1}\times I)} with compact support such that \s{\lambda\alpha=\beta} and  \s{supp(\lambda)} is disjoint from \s{K}.
\end{lemma}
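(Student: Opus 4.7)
The plan is to construct an isotopy of neat $C^r$-embeddings $F\colon I \times I \to \mr^{n-1}\times I$ from $\alpha$ to $\beta$ whose total image $F(I\times I)$ is disjoint from $K$, and then upgrade $F$ to a compactly supported ambient diffeotopy of $\mr^{n-1}\times I$ via \cref{iso-ex}; the desired $\lambda$ will be the time-$1$ map.

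First, I would produce \emph{some} neat isotopy from $\alpha$ to $\beta$, ignoring $K$ entirely. Since $\mr^{n-1}\times I$ is contractible and $n-1\geq 3$, the space of neat $C^r$-embeddings $I \hookrightarrow \mr^{n-1}\times I$ is path connected: move each endpoint of $\alpha$ to the corresponding endpoint of $\beta$ within the connected boundary component $\mr^{n-1}\times\{0\}$ or $\mr^{n-1}\times\{1\}$, then use the high codimension to isotope the resulting interior arc onto $\beta$. This yields $F\colon I\times I\to \mr^{n-1}\times I$ with $F_0=\alpha$, $F_1=\beta$, each $F_s$ a neat embedding.

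Next, I would modify $F$ so that $F(I\times I)\cap K=\varnothing$ while keeping $F_0=\alpha$ and $F_1=\beta$ fixed. For the top-dimensional disks in $K$, choose a neighborhood $U$ of them disjoint from the compact set $\alpha(I)\cup\beta(I)$ and apply \cref{away from disk} (using $\dim(I\times I)=2<n$) to obtain $g\in \text{Diff}^r_0(\mr^{n-1}\times I)$ with $\text{supp}(g)\subset U$ and $gF(I\times I)$ disjoint from the disks; since $g$ fixes $\alpha(I)\cup\beta(I)$ pointwise, $gF$ is still an isotopy from $\alpha$ to $\beta$. For the $1$-dimensional arcs in $K$, apply \cref{emb and trans} with $M'=\{0,1\}\times I$ to perturb $F$ within a strong neighborhood so as to be transverse to each arc while keeping the germ on $\{0,1\}\times I$ fixed. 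The dimension count $\dim(I\times I)+\dim(\text{arc})=3<n$ then forces $F(I\times I)$ to be disjoint from every arc. By the openness of (neat) embeddings (\cref{emb open and dense}) and of isotopies (\cref{openess of isotopy}), a sufficiently small such perturbation leaves $F$ an isotopy through neat embeddings that is still disjoint from the disks dealt with in the previous step.

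Finally, choose an open neighborhood $W$ of the compact set $F(I\times I)$ disjoint from $K$, and apply \cref{iso-ex} to extend $F$ to a compactly supported $C^r$-diffeotopy $H$ of $\mr^{n-1}\times I$ with $\text{supp}(H)\subset W$. Setting $\lambda:=H_1$ gives $\lambda\alpha=F_1=\beta$, $\lambda\in\text{Diff}^r_0(\mr^{n-1}\times I)$ with compact support, and $\text{supp}(\lambda)\subset W$ disjoint from $K$, as required. I expect the main technical obstacle to be the middle step: arranging that both perturbations simultaneously preserve the neat-embedding-per-slice condition and the prescribed boundary values $F_0=\alpha$, $F_1=\beta$. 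This is handled by using the relative ``fix germ on a closed subset'' version of \cref{emb and trans}, by choosing the support of the diffeomorphism from \cref{away from disk} carefully away from $\alpha(I)\cup\beta(I)$, and by invoking the openness properties above so that sufficiently small $C^r$ perturbations do not break the isotopy-of-neat-embeddings structure.
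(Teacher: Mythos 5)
Your argument is essentially the same as the paper's: both build an isotopy of neat embeddings from $\alpha$ to $\beta$, push its $2$-dimensional image off the disks via \cref{away from disk} and off the arcs via transversality (\cref{emb and trans}, using $2+1<n$), and then upgrade to a compactly supported ambient diffeotopy with \cref{iso-ex}. The paper organizes this by first normalizing $\beta$ to the straight segment and citing the construction in the proof of \cref{annuisoto} for the isotopy and the disjointification; your version spells those steps out directly, but the substance is the same.
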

\begin{proof}
To be simple, in the proof we will assume all of the maps mentioned are \s{C^r}.
We  can assume that \s{\beta(t)=(0,t),t\in I}. We first can approximate \s{\alpha} by an neat embedding that is linear near the endpoints. Moreover, \s{\alpha} is ambient isotopic to such an neat embedding, with support disjoint from \s{K}. Hence we may assume \s{\alpha} has linear germs at endpoints.  Moreover locally rotating the vectors we may even assume the first derivative of \s{\alpha} is \s{(0,1)} when near the endpoints. Now \s{\alpha} is isotopic to a neat embedding \s{\gamma}, where$$\gamma(t)=(0,t)\in \mr^{n-1}\times I, t \text{ near the boundary }$$ Using the same technique in \cref{annuisoto}, we can perturb the isotopy so that it's image is disjoint from \s{K} and satisfies the condition of isotopy extension theorem \cref{iso-ex}. Hence \s{\alpha} is ambient isotopic to \s{\gamma} where the support of the ambient isotopy is disjoint from \s{K}. Now using the first part in the proof of \cref{annuisoto}, \s{\gamma} and \s{\beta} are ambient isotopic where its support is disjoint from \s{K}. The support of all of ambient isotopy mentioned can be made compact.

\end{proof}

\section{The main theorem}

In this section we assume that \s{n\geq 4} and \s{1\leq r\leq \infty} unless otherwise specified. So our goal in this section is to prove:

\begin{thm}\label{main diff}
 \s{\text{Diff}^r_+(S^n)} is boundedly acyclic for \s{n\geq 4,1\leq r\leq \infty}.
\end{thm}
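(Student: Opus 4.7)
The overall strategy is a two-stage application of \cref{gp to quotient}. As announced in the introduction, one first proves that the compactly supported group $G_k = \text{Diff}^r_c(S^n - U_k)$ is boundedly acyclic for every $k \geq 1$, and then transfers this result to $\text{Diff}^r_+(S^n)$ via a suitable semi-simplicial action.

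For the transfer, I would let $X_\bullet$ be the semi-simplicial set whose $k$-simplices are ordered $(k+1)$-tuples of distinct points of $S^n$, each equipped with the germ of a smooth parametrization of a ball, with face maps by coordinate deletion. The $\text{Diff}^r_+(S^n)$-action on $X_\bullet$ is transitive on each $X_k$ by ambient isotopy extension for parametrized disks, so $X_\bullet / \text{Diff}^r_+(S^n)$ is the standard $\infty$-simplex and boundedly acyclic. The stabilizer of a $k$-simplex is exactly the group of diffeomorphisms with trivial germ at each chosen point, which is the group $\text{Diff}^r_c(S^n - F_{k+1})$ for $F_{k+1} = \{p_0, \ldots, p_k\}$; by \cref{deformcoame} applied to the $0$-dimensional submanifold $F_{k+1}$ this group is coamenable to $G_{k+1}$, and hence boundedly acyclic by \cref{coame to inj} once uniform bounded acyclicity of $G_{k+1}$ is established. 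The bounded acyclicity of $X_\bullet$ itself should follow from the standard nerve argument for configuration-type complexes on an infinite ambient manifold, along the lines used throughout \cite{MN23,FNS24}.

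To prove that $G_k$ is boundedly acyclic --- the technical heart of the paper --- one builds a semi-simplicial $G_k$-complex $Y_\bullet$ whose $0$-skeleton is a set of properly embedded arcs in $S^n - U_k$ joining prescribed boundary spheres of $U_k$, with higher simplices encoding a ``generic position'' relation such as pairwise disjoint interiors. Three conditions for \cref{gp to quotient} then need to be verified:
\emph{(i)} $Y_\bullet$ is boundedly acyclic, via a nerve-theoretic homogeneity argument using the abundant freedom to perturb arcs off each other when $n \geq 4$;
\emph{(ii)} uniform bounded acyclicity of stabilizers of $k$-simplices --- cutting $S^n - U_k$ along a maximal arc-system produces a complement with an $\mathbb{R}$-factor, so that \cref{ba for product} applies; and
\emph{(iii)} bounded acyclicity of the quotient $Y_\bullet / G_k$, which reduces to transitivity of the $G_k$-action on each orbit set in $Y_k$.

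The decisive obstacle is step (iii). Given two arc-configurations $\mathcal{A}, \mathcal{A}'$, one first produces an ambient diffeomorphism of $S^n$ carrying $\mathcal{A}$ to $\mathcal{A}'$ via \cref{iso of arc in Dn} and the isotopy-extension theorem \cref{iso-ex}. This diffeomorphism generally fails to lie in $G_k$: it may carry a nontrivial germ at $\partial U_k$, or its support may meet other arcs or components of $U_k$ that should be left untouched. The Main lemma \cref{annuisoto} is precisely tailored to this correction: in the annular collar $S^{n-1} \times I$ of an arc, it replaces a given diffeomorphism by one with the same germ on the core arc but trivial germ on a prescribed union of disks and arcs (the obstacles). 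The hypothesis $n \geq 4$ enters essentially here through the transversality (\cref{emb and trans}) and disk-avoidance (\cref{away from disk}) arguments, which let one push $1$- and $2$-dimensional singular sets off low-dimensional obstacles in codimension $\geq 2$. This is exactly why the method breaks down in dimensions $n = 2, 3$, as the introduction observes.
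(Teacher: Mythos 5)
Your proposal follows essentially the same two-stage strategy as the paper: act on the fat-point complex of \cite{MN23} (whose stabilizers are coamenable to the groups $G_k$ by \cref{deformcoame}), and then prove $G_k$ boundedly acyclic via an arc-type semi-simplicial complex, with \cref{gp to quotient} applied at both stages, \cref{ba for product} for stabilizer acyclicity, and the Main Lemma \cref{annuisoto} as the decisive tool for transitivity of orbits. This is exactly the architecture of \cref{G_k ba} and its supporting lemmas.

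One point worth sharpening: the paper's $0$-skeleton $Y_0$ is not the set of arc configurations themselves but the disjoint union $\bigsqcup_{\vec{x}}G_k/\!\sim_{\vec{x}}$, i.e.\ germs of diffeomorphisms along the chosen arcs. This is what forces the $p$-stabilizer to be $\text{Diff}^r_c$ of the literal set-theoretic complement of the arcs (cf.\ the identification in \cref{stab ba}), so that after tubular-neighborhood thickening it becomes $\text{Diff}^r_c$ of an $\mathbb{R}$-factored manifold. If instead the $0$-simplices were bare embedded arcs, the stabilizers would include diffeomorphisms fixing the arcs only setwise or pointwise but with nontrivial germ, a strictly larger group to which \cref{ba for product} does not immediately apply. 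Relatedly, since elements of $G_k$ fix $\partial U_k$ pointwise, the quotient $Y_\bullet/G_k$ is not a single $\infty$-simplex but is indexed by the boundary footprints of the arcs; the paper identifies it with the complex $Z_\bullet$ of disjoint tuples in $\prod C_i$, whose bounded acyclicity again comes from \cref{ge to ba}. These are refinements of your sketch rather than departures from it, and the ingredients you list (\cref{annuisoto}, \cref{iso of arc in Dn}, \cref{iso-ex}, \cref{emb and trans}, \cref{away from disk}) are exactly the ones the paper uses to carry out the transitivity step.
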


We first need a term defined in \cite[Definition 3.1]{MN23}:

\begin{definition}\label{generic def}
Let $X$ be a set.  We call a binary relation $\perp$ \emph{generic} if  for every given finite set $Y \subseteq X$,  there is an element $x \in X$ such that $y \perp x$ for all $y \in Y$. 
\end{definition}
Recall that any generic relation on $X$ gives rise to a semi-simplicial set $X_{\bullet}^{\perp}$ in the following way:  we define $X_{n}^{\perp}$ to be the set of all $(n+1)$-tuples $\left(x_{0},  \ldots,  x_{n}\right) \in X^{n+1}$ for which $x_{i} \perp x_{j}$ holds for all $0 \leq j<i \leq n$.  The face maps $\partial_{n}:  X_{n}^{\perp} \rightarrow X_{n-1}^{\perp}$ are the usual simplex face maps,  \ie,

$$
\partial_{n}\left(x_{0},  \ldots,  x_{n}\right)=\sum_{i=0}^{n}(-1)^{i}\left(x_{0},  \ldots,  \widehat{x}_{i},  \ldots,  x_{n}\right),
$$

where $\widehat{x}_{i}$ means that $x_{i}$ is omitted.

\begin{lemma}\cite[Proposition 3.2]{MN23}\label{ge to ba}
    If the binary relation $\perp$ is generic, then the complex $X_{\bullet}^{\perp}$ is boundedly acyclic and connected.
\end{lemma}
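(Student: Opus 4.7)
The plan is to verify connectivity directly from the generic condition and to prove bounded acyclicity by constructing a bounded contracting cochain homotopy. The key insight is that the generic condition is saying exactly that a certain family of subsets of $X$ has the finite intersection property, which lets me build an ultrafilter with the right combinatorial behaviour for a cone-style argument.

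First I would dispose of connectivity. Given any $x_0, x_1 \in X = X_0^\perp$, applying genericity to the two-element set $\{x_0, x_1\}$ produces $y \in X$ with $x_0 \perp y$ and $x_1 \perp y$, so that $(y, x_0)$ and $(y, x_1)$ both belong to $X_1^\perp$; this gives an edge-path from $x_0$ to $x_1$. Non-emptiness of $X_0^\perp$ follows by applying genericity to the empty set.

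For bounded acyclicity, for each finite $F \subseteq X$ I set $P(F) := \{y \in X : x \perp y \text{ for every } x \in F\}$. Since $P(F) \cap P(F') = P(F \cup F')$, the generic condition says precisely that the family $\{P(F) : F \subseteq X \text{ finite}\}$ has the finite intersection property, so it generates a proper filter which I extend to an ultrafilter $\mathcal{U}$ on $X$. For $n \geq 1$ and $f \in \ell^\infty(X_n^\perp)$ I then define
$$h^n f(x_0, \ldots, x_{n-1}) := \lim_{y \to \mathcal{U}} f(y, x_0, \ldots, x_{n-1}),$$
where the right-hand side is evaluated on the $\mathcal{U}$-full set $P(\{x_0, \ldots, x_{n-1}\})$, on which $(y, x_0, \ldots, x_{n-1})$ is a genuine element of $X_n^\perp$. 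This is clearly well-defined with $\|h^n f\|_\infty \leq \|f\|_\infty$.

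It then remains to check the chain-homotopy identity $\delta^{n-1} h^n + h^{n+1} \delta^n = \mathrm{id}$ on $\ell^\infty(X_n^\perp)$ for $n \geq 1$; this is a direct simplicial computation. Expanding $(\delta^n f)(y, x_0, \ldots, x_n)$ through the face maps, the $d_0$-term equals $f(x_0, \ldots, x_n)$ independently of $y$, while each remaining term becomes $-(-1)^j h^n f(x_0, \ldots, \widehat{x_j}, \ldots, x_n)$ in the $\mathcal{U}$-limit, assembling to $-\delta^{n-1} h^n f$. The main obstacle is really only the conceptual step of extracting the ultrafilter from the generic condition; once that packaging is done, the rest is a routine coning calculation, and the bound $\|h^n f\|_\infty \leq \|f\|_\infty$ immediately yields vanishing of bounded cohomology in all positive degrees.
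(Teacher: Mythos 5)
Your argument is correct, and it is essentially the same ultrafilter-cone proof that appears in \cite{MN23}: genericity gives the finite intersection property of the sets $P(F)$, an ultrafilter refining that filter yields a norm-nonincreasing contracting homotopy $h^n f = \lim_{\mathcal{U}} f(y,-)$, and the routine face-map expansion gives $h^{n+1}\delta^n + \delta^{n-1}h^n = \mathrm{id}$ in positive degrees.
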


To compute \s{H^\bullet_b\Big(\text{Diff}^r_+(S^n)\Big)} in low degrees \cite[Theorem 1.10]{MN23} , Monod and Nariman used the set of germs on \s{0} of all orientation-preserving \s{C^r}-embeddings \s{D^n\ra S^n}, denote this set by \s{X} and call every element of \s{X} a fat point, they called the image of \s{0} for each embedding the core of the fat point. Then they defined a generic relation \s{\perp} on \s{X} by setting for each \s{x,y\in X}, \s{x\perp y} if their cores are disjoint. Now \s{\text{Diff}^r_+(S^n)} acts highly transitively
on this complex, which means that the quotient complex is boundedly acyclic. (We say a group action is highly transitive if, for all \s{k\in \mn^+}, it is transitive on ordered k-tuples of distinct elements.)
Now we continue this idea to prove \s{\text{Diff}^r_+(S^n)} is boundedly acyclic. By \cref{gp to quotient}, We only need to verify all of the stabilizers of this action are boundedly acyclic.

We start by defining some subgroups of \s{\text{Diff}^r_+(S^n)}. Given a positive integer \s{k}, we fix some \s{k+1} pair-wise disjoint \s{n}-dimensional smoothly embedded compact disks in \s{S^{n-1}}, which is denoted by \s{B_0,\cdots, B_k}. Let \s{\Lambda_k= S^n-(\bigcup_{0\leq i\leq k}B_k)}
\begin{definition}
    Define \s{G_k} to be the group of elements \s{f\in \text{Diff}^r_+(S^n)} such that the germ of \s{f} at \s{\bigcup_{0\leq i\leq k}B_k}  is trivial.
\end{definition}
 Equivalently, \s{G_k} can be identified with \s{\text{Diff}^r_c(\Lambda_k)}.
Since the diffeomorphism type of \s{\text{Diff}^r_c(\Lambda_k)} is independent of the choice of \s{B_i}, the isomorphism type of \s{G_k} doesn't depend on the choice of these disks.  To be simple, we will always assume all of these \s{B_0,\cdots, B_k} are round disks if needed.

  For each \s{1\leq i\leq k}, we fix a smooth diffeomorphism \s{\alpha_i:S^{n}-int(B_0\cup B_i)\ra S^{n-1}\times I} such that \s{\alpha_i(\partial B_0)=S^{n-1}\times 0} and \s{\alpha_i(\partial B_i)=S^{n-1}\times 1}. Fix \s{k} disjoint \s{(n-1)}-dimensional round closed disks in \s{S^{n-1}} denoted by \s{C_1,\cdots, C_k}, such that \s{\alpha_i^{-1}(C_i\times I) } is disjoint from \s{\alpha_j^{-1}(C_j\times I) } for any \s{i\neq j}.

  \begin{definition}
    Given \s{\vec{y}=(y^1,\cdots,y^k)} and \s{\vec{y'}=(y'^1,\cdots,y'^k)} in \s{\prod_{1\leq i\leq k}C_i} we say they are disjoint if \s{y^i\neq y'^i} for all \s{1\leq i\leq k}, which is equivalent to say none of their coordinates coincide with each other. Since each \s{C_i} is an infinite set, this disjoint relation is a generic relation. Let \s{Z_\bullet} be the semi-simplicial complex consisting of pair-wise disjoint tuples. 
\end{definition}


Let \s{\vec{x}=(x^1,\cdots,x^k)\in \prod_{1\leq i\leq k} C_i}, then we define an equivalence relation \s{\sim_{\vec{x}}} on \s{G_k}: \s{g_1\sim_{\vec{x}} g_2 } when \s{g_1} and \s{g_2} have the same germ at  \s{\alpha_i^{-1}(x^i\times I) } for each \s{{1\leq i\leq k}}.

\textbf{Now we are ready to construct our semi-simplicial complex \s{Y_\bullet} as follows:  Define its \s{0}-skeleton \s{Y_0:=\bigsqcup_{\vec{x}\in \prod_{1\leq i\leq k} C_i}G_k/\sim_{\vec{x}}}.
We denote by \s{[g]_{\vec{x}}}  the equivalence class that \s{g} represents, here \s{g\in G_k, \vec{x}
\in\prod_{1\leq i\leq k} C_i }.
Given \s{[g_1]_{\vec{y_1}}} and \s{[g_2]_{\vec{y_2}}} in \s{Y_0},  we define a binary relation \s{\perp} on \s{Y_0} as follows: \s{[g_1]_{\vec{y_1}}\perp [g_2]_{\vec{y_2}}}  when \s{\bigcup_{1\leq i\leq k}g_1\alpha_i^{-1}(y^i_1\times I)} is disjoint from 
 \s{\bigcup_{1\leq i\leq k}g_2\alpha_i^{-1}(y^i_2\times I)}, where  we write \s{\vec{y_1}=(y^1_1,\cdots,y^k_1),\vec{y_2}=(y^1_2,\cdots,y^k_2)}.
Let \s{Y_\bullet} be the associated complex generated by \s{\perp}. Then \s{G_k} acts on \s{Y_0} in a natural way as $$h[g]_{\vec{x}} =[hg]_{\vec{x}}$$ for any \s{h\in G_k,[g]_{\vec{x}}\in Y_0}. Moreover, this action preserves the relation \s{\perp}, hence \s{Y_\bullet} is a  \s{G_k}-complex.
}
\begin{lemma}\label{generic}
     \s{\perp} is a generic relation.
\end{lemma}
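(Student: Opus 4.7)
The plan is to produce, for any finite collection of vertices of \s{Y_0}, a perpendicular vertex by taking \s{g=\mathrm{id}_{S^n}} and choosing the tuple \s{\vec{y}} generically. The main (and essentially only) ingredient is a dimension count.

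First I would fix a finite family \s{[g_1]_{\vec{y}_1},\ldots,[g_m]_{\vec{y}_m}} of vertices and set
\[
K \;=\; \bigcup_{j=1}^m \bigcup_{i=1}^k g_j\,\alpha_i^{-1}(y_j^i\times I),
\]
a finite union of smoothly embedded arcs in \s{S^n}. Since \s{\mathrm{id}_{S^n}\in G_k}, it suffices to exhibit \s{\vec{y}=(y^1,\ldots,y^k)\in\prod_i C_i} such that \s{\alpha_i^{-1}(y^i\times I)\cap K=\emptyset} for every \s{i}; the vertex \s{[\mathrm{id}]_{\vec{y}}} will then satisfy \s{[\mathrm{id}]_{\vec{y}}\perp[g_j]_{\vec{y}_j}} for every \s{j}.

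Next I would perform the dimension count, independently for each \s{i}. Let
\[
S_i \;=\; \{\,y\in C_i : \alpha_i^{-1}(y\times I)\cap K\neq\emptyset\,\},
\]
which is precisely the image of \s{\alpha_i(K)\cap(C_i\times I)} under the projection \s{C_i\times I\to C_i}. Since \s{K} is a finite union of smooth \s{1}-dimensional arcs, \s{\alpha_i(K)\cap(C_i\times I)} has finite \s{1}-dimensional Hausdorff measure, and hence so does its continuous projection \s{S_i}. Because \s{\dim C_i=n-1\geq 3>1}, the set \s{S_i} has empty interior in \s{C_i}, so its complement is nonempty (and in fact dense). Choosing \s{y^i\in C_i\setminus S_i} for each \s{i} and letting \s{\vec{y}=(y^1,\ldots,y^k)} completes the construction.

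There is no serious obstacle: the only real input is the dimension inequality \s{\dim C_i=n-1>1}, which is comfortably guaranteed by the standing hypothesis \s{n\geq 4} (indeed \s{n\geq 3} would already suffice here). None of the differential-topological machinery developed earlier enters — those tools are reserved for the far more delicate tasks of proving that the stabilizers and the quotient complex are boundedly acyclic.
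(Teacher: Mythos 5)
Your argument is correct, and it takes a genuinely different route from the paper. The paper first chooses $\vec{y}$ merely disjoint (coordinatewise) from the $\vec{y}_j$, then invokes the transversality machinery (\cref{emb and trans} together with \cref{diff op}) to perturb the identity to some $g\in G_k$ whose image arcs are transverse to the arcs of $K$; disjointness then follows from the dimension count $1+1<n$. You instead take $g=\mathrm{id}$ outright and shift the general-position argument from the map to the parameter $\vec{y}$: since the projection to $C_i$ of the compact $1$-dimensional set $\alpha_i(K)\cap(C_i\times I)$ has finite $1$-dimensional Hausdorff measure, its complement in the $(n-1)$-dimensional disk $C_i$ is dense, and any $y^i$ in that complement gives an arc avoiding $K$. (Note this automatically forces $y^i\neq y_j^i$ for all $j$, since $\alpha_i^{-1}(y_j^i\times\{1\})\in\partial B_i$ lies in $K$ because $g_j$ is trivial near $\partial B_i$; so you do not even need to impose disjointness of $\vec{y}$ from the $\vec{y}_j$ as a separate step.) Both approaches hinge on the same dimension inequality, and both work for $n\geq 3$. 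Yours is more elementary — it replaces the appeal to Thom transversality and the openness of $\mathrm{Diff}^r$ by a direct measure-theoretic observation — at the cost of needing the Lipschitz/Hausdorff-measure reasoning, which the paper's purely differential-topological framework avoids. Either is a complete proof of \cref{generic}.
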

\begin{proof}
   Let \s{\{[g_1]_{\vec{y_1}},\cdots,[g_m]_{\vec{y_m}}\}} be a finite subset  of $Y_0$. For each \s{1\leq i\leq m}, write \s{\vec{y_i}=(y^1_i,\cdots,y^k_i)}. We pick any \s{\vec{y}=(y^1,\cdots,y^k)\in \prod_{1\leq i\leq k} C_i} which is disjoint from \s{\vec{y_j}} for each \s{1\leq j\leq m}. By \cref{emb and trans} and \cref{diff op},  there exists \s{g\in \text{\text{Diff}}^r_+(S^n)} having the same germ with the identity map on \s{\bigcup_{0\leq i\leq k}B_i}, such that the restriction of \s{g} on \s{\bigcup_{1\leq i\leq k}\alpha_i^{-1}(y^i\times I)} is transverse to  \s{\bigcup_{1\leq i\leq k,1\leq j\leq m}g_j\alpha_i^{-1}(y^i_j\times I)}. Hence \s{\bigcup_{1\leq i\leq k}g\alpha_i^{-1}(y^i\times I)} and \s{\bigcup_{1\leq i\leq k,1\leq j\leq m}g_j\alpha_i^{-1}(y^i_j\times I)} are disjoint since they are both  \s{1}-dimensional and transversal in a manifold of dimension \s{\geq 4}. 
\end{proof}


\begin{lemma}\label{transitivity of orbit}
   Let \s{\sigma_p=([g_0]_{\vec{y_0}},\cdots,[g_p]_{\vec{y_p}}), \sigma'_p=([g'_0]_{\vec{y'_0}},\cdots,[g'_p]_{\vec{y'_p}})} be two \s{p}-simplicies in \s{Y_p}. Then they lie in the same orbit if and only if  \s{\vec{y_i}=\vec{y'_i}} for all \s{0\leq i\leq p}.
\end{lemma}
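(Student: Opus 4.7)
The ``only if'' direction is immediate: the formula $h\cdot[g]_{\vec{y}}=[hg]_{\vec{y}}$ shows that the $G_k$-action preserves the second index $\vec{y}$, and equivalence classes indexed by distinct $\vec{y}$'s lie in different components of the disjoint union defining $Y_0$, so $h\cdot\sigma_p=\sigma'_p$ forces $\vec{y_i}=\vec{y'_i}$ for every $i$.

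For the ``if'' direction, assume the common $\vec{y_i}$'s. I must construct $h\in G_k$ whose germ at each arc $\gamma_{ij}:=g_i(\alpha_j^{-1}(y^j_i\times I))$ equals that of $g'_ig_i^{-1}$, while being trivial near $\bigcup_m B_m$. Set $\gamma'_{ij}:=g'_i(\alpha_j^{-1}(y^j_i\times I))$. By $\perp$ together with the disjointness of the tubes $\alpha_\ell^{-1}(C_\ell\times I)$, each of the families $\{\gamma_{ij}\}$ and $\{\gamma'_{ij}\}$ is pairwise disjoint; moreover, when $(i,j)\neq(i',j')$ the endpoints of $\gamma_{ij}$ on $\partial B_0\cup\partial B_j$ are disjoint from those of $\gamma'_{i'j'}$ (using the disjointness of the $C_\ell$'s and of the coordinates of $\vec{y_i}\neq \vec{y_{i'}}$). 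The obstruction is that $\gamma_{ij}$ may intersect $\gamma'_{i'j'}$ in their interiors for $(i,j)\neq(i',j')$, and this is the crux of the problem.

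My plan is a two-step construction. \emph{Step~1.} Produce an auxiliary $h_0\in G_k$ with $h_0(\gamma_{ij})\cap\gamma'_{i'j'}=\emptyset$ whenever $(i,j)\neq(i',j')$. Since $h_0$ is to be identity near $\bigcup_m B_m$, the endpoints of $h_0(\gamma_{ij})$ coincide with those of $\gamma_{ij}$, so by the endpoint disjointness above one only needs to remove the interior intersections. A generic perturbation of the identity supported in $\mathrm{int}(\Lambda_k)$, built through \cref{emb and trans}, \cref{isotopy in close map} and \cref{iso-ex}, moves each $h_0(\gamma_{ij})$ transverse to (hence disjoint from, as $1+1<n$) the 1-dimensional set $\bigcup_{(i',j')\neq(i,j)}\gamma'_{i'j'}$. \emph{Step~2.} Iterate over the pairs $(i,j)$ in a fixed order $l=1,\ldots,N$: set $H_0:=h_0$ and $H_l:=\psi_lH_{l-1}$, where $\psi_l\in G_k$ adjusts the germ at $H_{l-1}g_{i_l}(\alpha_{j_l}^{-1}(y^{j_l}_{i_l}\times I))=h_0(\gamma_{i_lj_l})$ to agree with the target $g'_{i_l}$; the indicated equality holds because earlier $\psi_{l'}$'s are supported in chart neighborhoods disjoint from this not-yet-treated arc.

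Each $\psi_l$ is produced by the main lemma \cref{annuisoto} applied in the chart $\alpha_{j_l}\colon S^n-\mathrm{int}(B_0\cup B_{j_l})\cong S^{n-1}\times I$, after moving $h_0(\gamma_{i_lj_l})$ to the standard arc $0\times I$ via \cref{iso of arc in Dn}. The bad set $K$ to be kept trivial consists of the remaining arcs (past targets $\gamma'_{i'j'}$ for $l'<l$ and untreated images $h_0(\gamma_{i'j'})$ for $l'>l$) together with the disks $B_m$ for $m\neq 0,j_l$; the general position arranged in Step~1, combined with $\perp$ on $\sigma'_p$ and the tube setup, guarantees $K$ is disjoint from $h_0(\gamma_{i_lj_l})\cup\gamma'_{i_lj_l}$, so \cref{annuisoto} applies and extends trivially to an element of $G_k$. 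After $N$ iterations, $H_N$ satisfies $H_N\cdot\sigma_p=\sigma'_p$. The main obstacle I anticipate is Step~1: while the existence of $h_0$ is intuitively clear by transversality (and uses the dimension hypothesis via $1+1<n$), care is needed to keep the perturbation inside $G_k$, i.e.\ trivial at every $B_m$. Once Step~1 is in place, Step~2 is a clean iteration of the main lemma.
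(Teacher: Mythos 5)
Your proof is correct in outline but takes a genuinely different route from the paper's. You insert a general-position step (Step~1) before iterating \cref{annuisoto}: a small perturbation $h_0\in G_k$ that removes all interior intersections between the two families $\{\gamma_{ij}\}$ and $\{\gamma'_{i'j'}\}$, after which a flat pass over all $(p+1)k$ arcs works. The paper avoids this step entirely by a \emph{double induction}: an outer induction on $p$ reduces to the case $[g_j]_{\vec{y_j}}=[g'_j]_{\vec{y_j}}$ for $j\geq 1$, so that the arcs coming from the last $p$ vertices of $\sigma_p$ and $\sigma'_p$ literally coincide; and an inner induction on the coordinate $q\in\{1,\dots,k\}$ reduces to the case where $g_0$ and $g'_0$ already have the same germ on $\alpha_i^{-1}(y_0^i\times I)$ for $i<q$, so those arcs coincide too. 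At each inner step the set $K$ fed to \cref{annuisoto}—the tail arcs, the already-matched head arcs, and the $B_m$ for $m\neq0,q$—is then \emph{automatically} disjoint from both $g_0\alpha_q^{-1}(y_0^q\times I)$ and $g'_0\alpha_q^{-1}(y_0^q\times I)$: the tail arcs are disjoint from each of the two $q$-arcs by $\perp$ in $\sigma_p$ and $\sigma'_p$ respectively, and the already-matched head arcs are disjoint since $g_0,g'_0$ are diffeomorphisms and $g_0=g'_0$ there. No transversality perturbation is needed at this level; \cref{annuisoto} carries all the transversality internally. What your version buys is avoiding the two-level induction at the cost of Step~1, which is exactly where (as you note) one must be careful that the perturbation stays trivial near every $B_m$ and also that perturbing the arcs simultaneously does not introduce new intersections among the $h_0(\gamma_{ij})$'s themselves. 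The endpoint-disjointness observation you use to justify Step~1 is correct and, in fact, is precisely the $\perp$-data that the paper uses more structurally. One small point to tighten if you pursue your route: in Step~2 the arc $h_0(\gamma_{i_lj_l})$ must first be carried to the standard arc $0\times I$ in the chart $\alpha_{j_l}$ (you mention \cref{iso of arc in Dn}); make sure this auxiliary move also carries the remaining $K$-arcs along so the disjointness from $0\times I\cup f(0\times I)$ required by \cref{annuisoto} is what you actually check.
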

\begin{proof}
''Only if'' direction is clear, since \s{G_k} will fix the boundary.

For ''if'' direction, assume \s{\vec{y_i}=\vec{y'_i}} for all \s{0\leq i\leq p} and write \s{\vec{y_i}=\vec{y'_i}=(y^1_i,\cdots,y^k_i)}.
We induct on \s{p}, when \s{p=0} there is nothing to prove. Assume the lemma holds for \s{<p} cases, now we deal with the \s{=p} case. 
     By induction hypothesis there is \s{\kappa\in G_k} such that \s{\kappa([g_1]_{\vec{y_1}},\cdots,[g_p]_{\vec{y_p}})=([g'_1]_{\vec{y'_1}},\cdots,[g'_p]_{\vec{y'_p}})}.
      Hence it suffices to prove to the special case where \s{([g_1]_{\vec{y_1}},\cdots,[g_p]_{\vec{y_p}})=([g'_1]_{\vec{y'_1}},\cdots,[g'_p]_{\vec{y'_p}})}.

     
     Assume we have reduced the lemma to the special case where \s{([g_1]_{\vec{y_1}},\cdots,[g_p]_{\vec{y_p}})=([g'_1]_{\vec{y_1}},\cdots,[g'_p]_{\vec{y_p}})} and \s{g_0} has the same germ with \s{g'_0} on \s{\bigcup_{1\leq i< q}\alpha_i^{-1}(y_0^i\times I)} for some \s{1\leq q\leq k}.
     By definition, we have that \s{\bigcup_{1\leq i\leq k,i\neq q}B_i\cup\bigcup_{1\leq i< q}g_0\alpha_i^{-1}(y_0^i\times I)} is disjoint from \s{g_0\alpha^{-1}_q(y^q_0\times I)\cup g'_0\alpha^{-1}_q(y^q_0\times I)}.
     Then by the main lemma \cref{annuisoto} there is \s{\phi_q\in \text{Diff}^r_{c}(S^{n-1}\times I)} such that \s{\alpha_q^{-1}\phi_q\alpha_q} has the same germ with \s{g'_0g_0^{-1}} on \s{\alpha_q^{-1}(y_0^q\times I)}, and has the trivial germ at $$\bigcup_{0\leq i\leq k}B_i \cup\bigcup_{1\leq i\leq k,1\leq j\leq p}g_i\alpha_i^{-1}(y^i_j\times I) \cup\bigcup_{1\leq i< q}g_0\alpha_i^{-1}(y_0^i\times I)$$

     It follows that \s{\alpha_q^{-1}\phi_q\alpha_q\in G_k}.
     Hence by letting \s{\alpha_q^{-1}\phi_q\alpha_q} act on \s{\sigma_p}, we can  further reduce to the special case  where \s{([g_1]_{\vec{y_1}},\cdots,[g_p]_{\vec{y_p}})=([g'_1]_{\vec{y_1}},\cdots,[g'_p]_{\vec{y_p}})} and \s{g_0} has the same germ with \s{g'_0} on \s{\bigcup_{1\leq i< {q+1}}\alpha_i^{-1}(y_0^i\times I)}.
     Now iterate this inductive process, we finally reduce to the case when \s{\sigma_p=\sigma'_p}.


\end{proof}

\begin{lemma}\label{stab ba} For every \s{p\geq 0},
   there is only one isomorphism type of \s{p-}stabilizers of the \s{G_k}-action. And each \s{p-}stabilizer is boundedly acyclic.
\end{lemma}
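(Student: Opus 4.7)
The stabilizer of a $p$-simplex $\sigma = ([g_0]_{\vec y_0}, \ldots, [g_p]_{\vec y_p})$ is precisely $\mathrm{Stab}(\sigma) = \text{Diff}^r_c(\Lambda_k - A_\sigma)$, where
\[
A_\sigma := \bigsqcup_{0 \leq i \leq p,\, 1 \leq j \leq k} g_i\,\alpha_j^{-1}(y_i^j \times I)
\]
is a disjoint union of $(p+1)k$ smoothly embedded arcs in $\Lambda_k$, each joining $\partial B_0$ to some $\partial B_l$ with $l \geq 1$. For the uniqueness of the isomorphism type, I would show that any two such arc configurations are carried into one another by a diffeomorphism of $S^n$ preserving $\bigcup_i B_i$ setwise. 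This is done arc by arc using \cref{iso of arc in Dn} in a tubular neighborhood of one arc and the isotopy extension \cref{iso-ex}, keeping each step's support disjoint from the previously matched arcs, the remaining arcs, and $\bigcup B_i$; the codimension $n-1 \geq 3$ leaves the necessary room. Conjugation by the resulting diffeomorphism yields the isomorphism.

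By that uniqueness, I am free to work with a convenient representative. Identify $S^n \setminus B_0 \cong \mathbb{R}^n$ and choose $A_\sigma$ so that, outside a large ball $B_R \supset \bigcup_{l \geq 1} B_l$, each arc coincides with a radial ray in one of $(p+1)k$ pre-chosen distinct directions of $S^{n-1}$. Apply \cref{deformcoame} to the closed $1$-submanifold $A_\sigma \subset \Lambda_k$, choosing the closed tubular neighborhood $V$ to be radial outside $B_R$; this gives that $\mathrm{Stab}(\sigma)$ is coamenable to $\text{Diff}^r_c(\Lambda_k - V)$, so by \cref{coame to inj} it suffices to prove the latter is boundedly acyclic.

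With this setup the ``outer end''
\[
E := (\Lambda_k - V) \cap (\mathbb{R}^n - B_R) \;\cong\; L \times \mathbb{R}
\]
is a genuine product, where $L := S^{n-1} - \bigsqcup_{j=1}^{(p+1)k} \overline{U_j}$ is an open $(n-1)$-manifold. The remaining key step is to prove that $\text{Diff}^r_c(E)$ is coamenable in $\text{Diff}^r_c(\Lambda_k - V)$: for any finite $F$ with compact support $K$, I would construct $\psi \in \text{Diff}^r_c(\Lambda_k - V)$ with $\psi(K) \subset E$ by flowing along a smooth outward radial vector field on $\mathbb{R}^n$, cut off by a bump function so that it vanishes near $\bigcup B_l$, is tangent to the radial tubes of $V$, and remains compactly supported. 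Then \cref{conju to coame} gives the coamenability, \cref{ba for product} gives that $\text{Diff}^r_c(L \times \mathbb{R})$ is boundedly acyclic, and \cref{coame to inj} propagates bounded acyclicity up the chain to $\mathrm{Stab}(\sigma)$.

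The delicate point, which I expect to be the main obstacle, is the construction of the pushing diffeomorphism $\psi$: the radial flow has to be cut off smoothly so as to preserve the radial tubes in $V$ and to avoid the obstacles $\bigcup B_l$, while still translating arbitrary compact subsets all the way into $E$. This is an explicit differential-topology construction using partitions of unity and the fact that $n \geq 4$; once in place, every other piece of the argument plugs into the lemmas already developed in the preceding section.
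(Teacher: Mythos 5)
Your identification of the stabilizer as $\text{Diff}^r_c(\Lambda_k - A_\sigma)$ is correct, and the arc-by-arc argument for uniqueness of the isomorphism type is reasonable (the paper deduces it more briefly from \cref{transitivity of orbit} together with high transitivity of $\text{Diff}^r(\overline{\Lambda_k})$ on $\prod_i\partial B_i$, but your route is similar in spirit). The genuine problem is in the bounded-acyclicity half, and it is precisely where you flag it.

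The paper does \emph{not} remove tubular neighborhoods of all $(p+1)k$ arcs and then try to reduce to a product end. It removes tubes of only the $k$ arcs coming from $\vec y_0$. These tubes, together with $\bigcup_i B_i$, form a fattened \emph{tree}, hence a ball $D^n$; therefore the complement $\Omega$ is again diffeomorphic to $D^n$. The remaining $pk$ arcs are then neat arcs inside that ball, and \cref{iso of arc in Dn} straightens them one by one (keeping the others fixed), giving $\Omega' \cong (\mathbb{R}^{n-1}-P)\times\mathbb{R}$ with $|P|=pk$. At that point \cref{ba for product} applies directly, and \cref{deformcoame} supplies the single coamenability step needed. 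There is no ``pushing into a sub-end'' step at all.

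Your version has to prove that $\text{Diff}^r_c(\Lambda_k - V)$ is coamenable to $\text{Diff}^r_c(E)$, and the construction you sketch (a radial vector field on $\mathbb{R}^n=S^n\setminus B_0$ cut off near $\bigcup_{l\geq 1} B_l$ and tangent to the radial tubes) does not work as stated: if the field vanishes near $B_l$, it cannot carry a compact set that sits close to $\partial B_l$ out past $B_R$; and a genuinely radial flow from a single center would drive many compact sets into the obstacles $B_l$ or into the (non-radial) inner portions of the tubes rather than around them. Moreover $E\simeq L\times\mathbb{R}$ with $L=S^{n-1}$ minus $(p+1)k$ disks has the homotopy type of $\bigvee_{(p+1)k-1}S^{n-2}$, while $\Lambda_k-V\cong(\mathbb{R}^{n-1}-P)\times\mathbb{R}$ has that of $\bigvee_{pk}S^{n-2}$; these differ whenever $k>1$, which is a warning sign that $E$ is not a collar of the whole (single) end of $\Lambda_k-V$, only of part of it. What is really required is a sweep along a genuine collar of the end, i.e., a product structure near infinity of $\Lambda_k-V$ — and constructing that is essentially equivalent to what the paper does directly and more cleanly. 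So the approach could likely be repaired, but as written the central step is a gap, not a lemma you can quote, and the ``explicit differential-topology construction'' you promise is where the actual mathematical content lies.
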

\begin{proof}
The isomorphism type of  \s{\sigma_p=([g_0]_{\vec{y_0}},\cdots,[g_p]_{\vec{y_p}})} only depends on \s{(\vec{y_0},\cdots ,\vec{y_p})}. Therefore  by a simple fact that the natural \s{\text{Diff}^r(\overline{\Lambda_k})}-action on \s{\prod_{1\leq i\leq k}\partial B_i} is highly transitive, we deduce that each two \s{p}-stabilizers are conjugated by an element in  \s{\text{Diff}^r(\overline{\Lambda_k})} hence isomorphic to each other.

Next we prove they are boundedly acyclic, by \cref{transitivity of orbit} it suffice to prove the special case where \s{g_0=\cdots =g_p=id} and \s{(\vec{y_0},\cdots ,\vec{y_p})} is a pair-wise disjoint tuple. Now we can pick any balls \s{B_i} and diffeomorphisms \s{\alpha_i} as we like to proceed the proof.
First, pick a closed tubular neighborhood \s{U} of \s{\bigcup_{1\leq i\leq k}\alpha_i^{-1}(y^i_0\times I)} in \s{\overline{\Lambda_k}} such that there is a smooth diffeomorphism \s{\rho:\Omega:=(\overline{\Lambda_k}-int(U))\ra D^n}.
In fact, we can choose \s{B_i}  to  be some round disks in \s{S^n} and choose \s{\alpha_i} such that \s{\alpha_i^{-1}(y^i_0\times I)} are just  straight lines for all \s{i}.
Moreover, \s{U} can be chosen to be the union of some 'pipes' that connect \s{B_0} with \s{\bigcup_{1\leq i\leq k}B_i} in a smooth way, as illustrated in the first picture of \cref{figure 1}. Therefore,
\s{\bigcup_{0\leq i\leq k}B_i\cup U} is smoothly diffeomorphic to \s{D^n}. Hence  \s{\Omega}, the closure of its complement in \s{S^n}, is also smoothly diffeomorphic to \s{D^n}.

By our choice, we have that for each \s{1\leq i\leq k,1\leq j\leq m}, \s{\rho} sends  \s{\alpha_i^{-1}(y^i_j\times I)}  to a neat \s{C^r}-simple arc in \s{D^{n-1}}.  We can choose a homeomorphism \s{\rho': D^n\ra D^{n-1}\times I} such that 
\begin{itemize}
    \item \s{\rho'} is smooth except on \s{\rho'^{-1}(S^{n-1}\times \{0,1\})};
    \item For each \s{{1\leq i\leq k,1\leq j\leq p}}, \s{\rho'\rho\alpha_i^{-1}(y^i_j,l)\in int(D^{n-1})\times l} for \s{l\in \{0,1\}}.
\end{itemize}

Now \s{\rho'\rho \bigg(\bigcup_{1\leq i\leq k,1\leq j\leq p}\alpha_i^{-1}(y^i_j\times I)\bigg)} is the disjoint union of neatly embedded arcs in \s{int(D^{n-1})\times I}.
By \cref{iso of arc in Dn}, we can move each of these arcs, one by one while each time fixing all other arcs, to the standard neat embedding: \s{t\ra (x,t)} for some fixed \s{x\in int(D^{n-1})}.

Therefore the interior \s{\Omega':=int\bigg(\Omega-\bigcup_{1\leq i\leq k,1\leq j\leq p}\alpha_i^{-1}(y^i_j\times I)\bigg)} is diffeomorphic to \s{(\mr^{n-1}-P)\times \mr)} (see \cref{figure 1}), here \s{P} is a finite subset of \s{\mr^{n-1}} with \s{pk} points.
 Since \s{\text{Diff}^r_{c}((\mr^{n-1}-P)\times\mr)} is boundedly acyclic by \cref{ba for product},  we have \s{\text{Diff}^r_{c}(\Omega')} is boundedly acyclic.
 
Now $stab_{\sigma_p}(G_k)= \text{Diff}^r_c(\Lambda-\bigcup_{1\leq i\leq k,0\leq j\leq p}\alpha_i^{-1}(y^i_j\times I))$,
since \s{U} is a closed tubular neighborhood of \s{\bigcup_{1\leq i\leq k}\alpha_i^{-1}(y^i_0\times I)}, by \cref{deformcoame}, \s{stab_{\sigma_p}(G_k)} is coamenable to \s{\text{Diff}^r_{c}(\Omega')}.
Therefore \s{stab_{\sigma_p}(G_k)} is also boundedly acyclic.

   \begin{figure}
    \centering
    \includegraphics[width=0.9\linewidth]{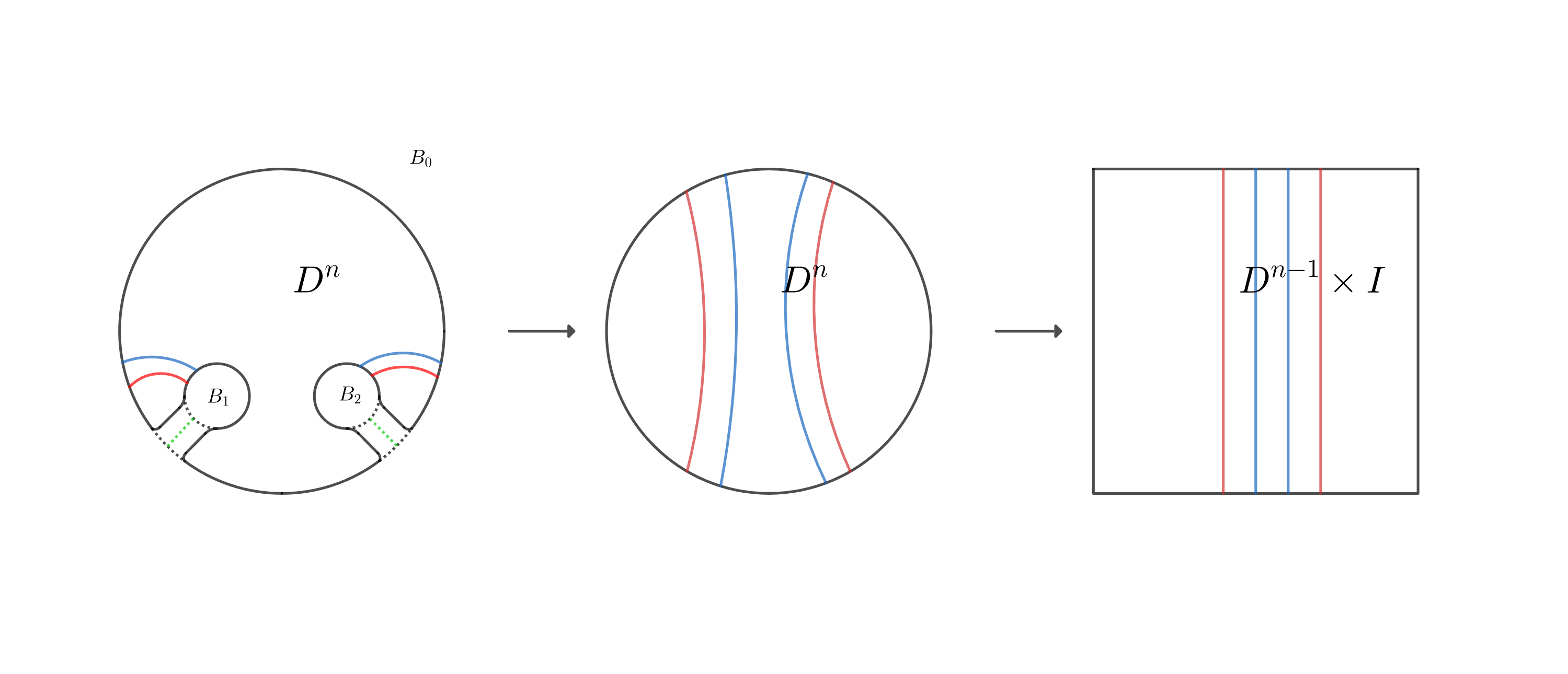}
    
    \caption{A way to see \s{V'\cong (\mr^{n-1}- P)\times\mr } when \s{k=2,p=2}, the blue and red lines are \s{\bigcup_{1\leq i\leq k,1\leq j\leq p}\alpha_i^{-1}(y^i_j\times I)}. }
    \label{figure 1}
\end{figure}
 
\end{proof}


\begin{lemma}\label{G_k ba}
   The quotient complex \s{Y_\bullet/G_k} is boundedly acyclic and connected, hence \s{G_k} is boundedly acyclic.
\end{lemma}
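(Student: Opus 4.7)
The plan is to identify the quotient $Y_\bullet/G_k$ with the semi-simplicial complex $Z_\bullet$ (the one generated by the disjointness relation on $\prod_{1\leq i\leq k}C_i$), and then apply \cref{gp to quotient}. The candidate isomorphism is $\Phi_p:Y_p/G_k\to Z_p$ sending the orbit of $\sigma_p=([g_0]_{\vec{y_0}},\ldots,[g_p]_{\vec{y_p}})$ to its ``tuple part'' $(\vec{y_0},\ldots,\vec{y_p})$. This map is well-defined on orbits because the $G_k$-action does not alter the coordinates $\vec{y_j}$, and by \cref{transitivity of orbit} the orbit is determined exactly by the tuple part, so $\Phi_p$ is also injective. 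Face-map compatibility is immediate from the definitions.

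The key point is to verify that the image of $\Phi_p$ is precisely $Z_p$. For containment in $Z_p$, I would use that each $g_j\in G_k$ has trivial germ at $\bigcup_{0\leq l\leq k}B_l$, so the endpoint of the arc $g_j\alpha_i^{-1}(y^i_j\times I)$ on $\partial B_i$ equals $\alpha_i^{-1}(y^i_j\times 1)$; the $\perp$-relation forces these endpoints to be distinct for different $j$, giving $y^i_j\neq y^i_l$ for all $i$, i.e.\ $(\vec{y_0},\ldots,\vec{y_p})\in Z_p$. Conversely, given any pair-wise disjoint tuple in $Z_p$, the identity lifts $g_j=\mathrm{id}$ assemble into a valid $p$-simplex in $Y_p$: for $j\neq l$ and $i\neq i'$ disjointness is automatic by our arrangement of $\alpha_i^{-1}(C_i\times I)$, and for $i=i'$ it follows from $y^i_j\neq y^i_l$. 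This yields the semi-simplicial isomorphism $Y_\bullet/G_k\cong Z_\bullet$.

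Since each $C_i$ is infinite, the disjointness relation on $\prod_{1\leq i\leq k}C_i$ is generic in the sense of \cref{generic def}, so \cref{ge to ba} gives that $Z_\bullet$ is boundedly acyclic and connected; hence so is $Y_\bullet/G_k$. For the final step, I apply \cref{gp to quotient} to the $G_k$-action on $Y_\bullet$: $Y_\bullet$ is boundedly acyclic and connected by \cref{generic} combined with \cref{ge to ba}, and by \cref{stab ba} all $p$-stabilizers are mutually isomorphic and boundedly acyclic, hence uniformly boundedly acyclic. Therefore
\[
H_b^\bullet(G_k)\;\cong\;H_b^\bullet(Y_\bullet/G_k)\;\cong\;H_b^\bullet(Z_\bullet),
\]
which vanishes in positive degrees.

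The delicate step is the endpoint analysis used to show the tuple part of any simplex in $Y_p$ lies in $Z_p$; it is the only place where the precise definition of $G_k$ (trivial germ at the $B_l$'s, so that the arc endpoints are genuinely fixed at $\alpha_i^{-1}(y^i_j\times 1)$) enters the identification. Everything else is a formal consequence of the isomorphism $Y_\bullet/G_k\cong Z_\bullet$ together with the tools already assembled.
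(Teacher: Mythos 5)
Your proposal is correct and follows the same approach as the paper: identify $Y_\bullet/G_k$ with $Z_\bullet$ via the tuple-part map, then invoke \cref{ge to ba} and \cref{gp to quotient}. The paper merely asserts that this map ``is actually an isomorphism,'' while you spell out well-definedness, injectivity via \cref{transitivity of orbit}, and the endpoint analysis establishing surjectivity onto exactly $Z_\bullet$; these details are right and worth having.
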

\begin{proof}

   There is a natural   map from \s{Y_\bullet/G_k } to \s{ Z_\bullet}, 
    given by \s{\sigma_p=([g_0]_{\vec{y_0}},\cdots,[g_p]_{\vec{y_p}})\mapsto (\vec{y_0},\cdots ,\vec{y_p})}.
   This is actually an isomorphism between complexes. Since \s{Z_\bullet} is connected and boundedly acyclic by \cref{ge to ba}, the first statement follows.

    Now by \cref{gp to quotient}, \s{G_k} is boundedly acyclic.
\end{proof}

\begin{proof}[Proof of \cref{main diff}]
    Consider the \s{\text{Diff}^r_+(S^n)}-action on the complex of fat points defined in \cite[Section 4]{MN23}. It suffices to prove the bounded acyclicity of all stabilizers. Each stabilizer is the subgroup of \s{\text{Diff}^r_+(S^n)} consisting of elements that has trivial germ at a given finite subset of \s{S^n}. By \cref{deformcoame} each stabilizer group is coamenable to \s{G_k} where \s{k} is the cardinality of this finite subset. By \cref{G_k ba}, \s{G_k} is boundedly acyclic.
\end{proof}

\section{Generalization}

In this chapter we prove:

\begin{thm}\label{3main}
    Let \s{V} be a smooth simply connected manifold of dimension \s{\geq 3}, such that \s{\partial V} is non-empty. Let \s{V_k} be the manifold  defined by deleting \s{k} pair-wise disjoint closed smoothly embedded \s{n}-dimensional disks in \s{int(V\times I)}. Then \s{\text{Diff}^r_c(V_k)} is boundedly acyclic.
\end{thm}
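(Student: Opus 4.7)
The plan is to mimic the proof of \cref{main diff} with $V \times I$ and $V_k$ playing the roles of $S^n$ and $\Lambda_k$. The ambient manifold $V \times I$ is simply connected of dimension at least $4$ and has non-empty boundary; these are the two geometric hypotheses underlying every step in \cref{main diff} (dimension $\geq 4$ for the generic relation and for transversality arguments, simple connectedness for the frame-bundle argument in Step 3 of the main lemma \cref{annuisoto}). Set $G := \text{Diff}^r_c(V_k)$.

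For each $i$ fix a ``corridor'' $E_i \subset \overline{V_k}$, realized by a smooth diffeomorphism $\alpha_i \colon D^{\dim V} \times I \to E_i$ with $\alpha_i(D^{\dim V} \times \{0\})$ a disk on $\partial B_i$ and $\alpha_i(D^{\dim V} \times \{1\}) = C_i$ a disk in $\partial V \times I$, the $C_i$ being pairwise disjoint and the $E_i$ disjoint. For $\vec{x} \in \prod_i \mathrm{int}(D^{\dim V})$, let $\sim_{\vec{x}}$ identify two elements of $G$ when they agree on the germ of $\bigcup_i \alpha_i(\{x^i\} \times I)$, and put $Y_0 := \bigsqcup_{\vec{x}} G/{\sim_{\vec{x}}}$. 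The disjointness relation $\perp$ on pushed-forward arcs is generic by exactly the transversality argument of \cref{generic}, so the resulting complex $Y_\bullet$ is boundedly acyclic by \cref{ge to ba}.

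The orbit analysis of \cref{transitivity of orbit} and the quotient computation of \cref{G_k ba} then carry over verbatim. The only non-formal input is the main lemma \cref{annuisoto}, which is applied locally inside a collar neighborhood of each arc $\alpha_i(\{y^i_0\} \times I)$ diffeomorphic to $S^{\dim V - 1} \times I$; such a neighborhood exists because each such arc is neatly embedded in the simply connected manifold $V \times I$. The quotient $Y_\bullet/G$ is thus isomorphic to the complex $Z_\bullet$ of pairwise-disjoint tuples in $\prod_i D^{\dim V}$ and is boundedly acyclic.

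The central technical step is the analog of \cref{stab ba}: by \cref{deformcoame} the stabilizer of $\sigma_p = ([\mathrm{id}]_{\vec{y_0}}, \ldots, [\mathrm{id}]_{\vec{y_p}})$ is coamenable to $\text{Diff}^r_c(\Omega')$ where $\Omega'$ is $V_k$ with a closed tubular neighborhood of all $(p+1)k$ arcs removed. Choosing the $j=0$ tubular neighborhoods as ``pipes'' joining $\partial B_i$ to $\partial V \times I$, the set $V \times I$ minus these pipes and the disks $B_i$ is $V \times I$ with $k$ disjoint half-balls removed from $\partial V \times I$, which is diffeomorphic to $V \times I$ after smoothing corners. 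The remaining $pk$ arcs become neatly embedded $1$-submanifolds in $V \times I$ and can be straightened to verticals $\{q_j\} \times I$ by a generalization of \cref{iso of arc in Dn}, whose proof uses only the high dimension and simple connectedness of the ambient manifold. Hence $\Omega' \cong (V \setminus \{q_1, \ldots, q_{pk}\}) \times \mathrm{int}(I) \cong (V \setminus \{q_1, \ldots, q_{pk}\}) \times \mathbb{R}$ and bounded acyclicity follows from \cref{ba for product}; applying \cref{gp to quotient} then completes the proof. The main obstacle is precisely the generalization of \cref{iso of arc in Dn} to the setting of $V \times I$, which should follow by the same proof adapted to use transversality and isotopy in $V \times I$ in place of $\mathbb{R}^{n-1} \times I$, together with the corresponding collar-smoothing identification $V \times I - (\text{half-balls}) \cong V \times I$.
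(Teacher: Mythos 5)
Your high-level strategy matches the paper's Section 3 exactly: set up the semi-simplicial complex of germ-classes of arcs connecting $\partial B_i$ to the outer boundary of $V\times I$, prove genericity by transversality, compute the quotient as the complex of disjoint tuples in $\prod_i D^{\dim V}$, identify stabilizers with $\text{Diff}^r_c\big((V\setminus P)\times\mathbb{R}\big)$ via straightening the remaining arcs, and conclude with \cref{ba for product}, \cref{deformcoame}, and \cref{gp to quotient}. You also correctly flag the real technical obstacle: generalizing \cref{iso of arc in Dn} (straightening arcs by compactly supported ambient isotopy) to $V\times I$. This is precisely the paper's new Lemma 3.3, proved using the simple connectedness of $V$ and a corner-smoothing remark.

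Where your sketch goes astray is the claim that the transitivity argument ``carries over verbatim'' and that the main lemma \cref{annuisoto} can be ``applied locally inside a collar neighborhood of each arc diffeomorphic to $S^{\dim V - 1}\times I$.'' The main lemma is a statement about diffeomorphisms of the entire annulus $S^{n-1}\times I$; in the $S^n$ proof this annulus is globally $S^n - \text{int}(B_0\cup B_q)$, and the arc runs from one boundary sphere to the other. In $V\times I$ there is no analogous annulus: a tubular neighborhood of a neat arc is a solid tube $D^{\dim V}\times I$, not an annulus, and the arc's two endpoints lie on $\partial B_q$ (a sphere) and a face of $\partial(V\times I)$ (a piece of a manifold with corners), which are not the two ends of an $S^{\dim V}\times I$. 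So \cref{annuisoto} cannot be invoked as a black box, even locally. What the paper does instead is restructure the transitivity proof (\cref{3.5trans}): first reduce to $g_0 = g_0' = \text{id}$, then use the new isotopy lemma (\cref{3.3 ambient iso}) to make all the other arc-images coincide, and finally \emph{re-run} Step 3 of \cref{annuisoto} (the germ-matching via the path in $\mathscr{P}$) directly in $N_{k,m}\cong V\times I$, which is legitimate because $\pi_1(Fr^+(V\times I))\leq\mathbb{Z}/2\mathbb{Z}$ when $V$ is simply connected. In short: your identification of the needed ingredients is right, but the inductive structure of \cref{transitivity of orbit} does not port over; the argument has to be reorganized around the isotopy lemma and the frame-bundle computation rather than around a local annulus.
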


The tools and ideas to prove this theorem has already been prepared in this previous two chapters, so the proof should not differ too much from the previous proofs. 

Since \s{V} is simply connected, we can assume  it's oriented.
By the long exact sequence of fiberation : $$\pi_2(V\times I)\ra \pi_1(GL^{n}(\mr))\ra \pi_1(Fr^+(V\times I))\xrightarrow{p_*} \pi_1(V\times I)= 1$$
we have \s{\pi_1(Fr^+(V\times I^1)\leq \mz/2\mz}. Hence \s{Aut(\pi_1(Fr^+(V\times I)))} is trivial.
Since there is a canonical \s{\text{Diff}^r_c(V\times I)}-action on \s{Fr^+(V\times I)}, this action induces a trivial action on \s{\pi_1(Fr^+(V\times I^1))}.
Let \s{\Gamma=\text{Diff}^r_c(V\times I)}.

Let \s{\vec{e_1},\cdots \vec{e_{n-1}}} be the standard basis of \s{\mr^{n-1}}.
Let \s{\phi:\mr^{n-1}\ra U} be a diffeomorphism onto a open subset \s{U\subset V}. Define \s{D_i=2i\vec{e_1}+D^{n-1},1\leq i\leq k}, the translate of \s{D^{n-1}} by the vector \s{2i\vec{e_1}}. Let  \s{B_i=\phi(D_i)\times [\frac{1}{3},\frac{2}{3}]}.  It's not hard to show  that \s{int(V\times I)-\cup_{1\leq i\leq k}B_i} is diffeomorphic to \s{V_k}, and the diffeomorphism between them can be extended to a homeomorphism to the boundary.
Let \s{G= \text{Diff}^r_c(int(V\times I)-\cup_{1\leq i\leq k}B_i)}. So to prove \cref{3main} is suffices to prove \s{G} is boundedly acyclic.

For \s{u\in \bigcup_{1\leq i\leq k}\phi(D_i)}, define \s{\Gamma_u:I\ra Fr^+(V\times I)} as \s{\Gamma_u(t)=[(u,\frac{t}{3}),(\phi_*(\vec{e_1}),\cdots, \phi_*(\vec{e_{n-1}}),\frac{d}{dt})]} where \s{u\in \phi(D_i)}. Let \s{\gamma_u: t\mapsto (u,\frac{t}{3}), t\in I}. For \s{f\in G}, let \s{f_*} be the induced map on \s{Fr^+(int(V\times I)-\cup_{1\leq i\leq k}B_i)}. Then \s{f_*\Gamma_u} is homotopic to \s{\Gamma_u} rel the endpoints \s{\{0,1\}}, since \s{G} acts trivially on \s{\pi_1(Fr^+(int(V\times I)-\cup_{1\leq i\leq k}B_i))\cong \pi_1(Fr^+(V\times I))}.

For \s{x\in \prod:=\prod_{1\leq i\leq k} \phi(int(D_i))} we always write its \s{i}-th coordinate as \s{x^i}, \ie, \s{x=(x^1,\cdots,x^k)}.  Similar to \ref{}, we define \s{x,y\in \prod} are disjoint, if the set of coordinates of \s{x} is disjoint form the set of coordinates of \s{y}. Let \s{\prod_\bullet} be the complex associated to this disjoint relation, \ie, the \s{\prod_p} is the set of \s{p+1} pair-wise disjoint tuples in \s{X^{p+1}}, with the natural face relations.

\begin{thm}\label{3.2 also main}
    \s{G} is boundedly acyclic. Hence \s{\text{Diff}^r_c(V_k)} is boundedly acyclic.
\end{thm}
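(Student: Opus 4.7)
The plan is to mirror the proof of Theorem~\ref{main diff} in Section~2 almost verbatim, replacing $S^{n-1} \times I$ by the simply-connected product $V \times I$ and invoking the triviality of the $G$-action on $\pi_1(Fr^+(V \times I))$ already established in the setup of this section wherever the sphere argument relied on the corresponding fact for spheres. The semi-simplicial $G$-complex $Y_\bullet$ is built exactly as in Section~2: I would set
$$
Y_0 := \bigsqcup_{x \in \prod} G/\!\sim_x,
$$
where $g_1 \sim_x g_2$ iff $g_1$ and $g_2$ share the same germ on $\bigcup_{1 \leq i \leq k} \gamma_{x^i}(I)$, and declare $[g_1]_{x_1} \perp [g_2]_{x_2}$ when $\bigcup_i g_1 \gamma_{x_1^i}(I)$ is disjoint from $\bigcup_i g_2 \gamma_{x_2^i}(I)$; let $Y_\bullet$ be the associated flag complex with its natural $G$-action $h \cdot [g]_x = [hg]_x$. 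Genericity of $\perp$ follows as in \cref{generic}: for any finite vertex set I would pick $y \in \prod$ disjoint from the given labels and use \cref{emb and trans} together with \cref{diff op} to perturb a candidate $g \in G$ so that $g \gamma_{y^i}(I)$ is transverse, hence (by $\dim(V \times I) \geq 4$) disjoint, from all existing arcs. \cref{ge to ba} then gives that $Y_\bullet$ is boundedly acyclic and connected.

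The main obstacle, and the step that uses simple-connectedness in an essential way, is the orbit-classification analog of \cref{transitivity of orbit}: two $p$-simplices of $Y_p$ lie in the same $G$-orbit iff they have the same $\prod$-coordinates. The inductive reduction from Section~2 works word-for-word once one has a $V \times I$-version of the main lemma (\cref{annuisoto}), saying that given $f \in G$ and an auxiliary union $K$ of arcs and top-dimensional disks disjoint from the arc $\gamma_{y_0^q}(I)$, one can produce $\varphi \in G$ with the same germ as $f$ along this arc but trivial germ on $K$. All three steps of the sphere proof carry over: Step~1 is the same partition-of-unity smoothing; Step~2 straightens the arc using density of embeddings (\cref{emb dense}), the isotopy extension theorem \cref{iso-ex}, and the fact that in the simply-connected manifold $V \times I$ any two neat arcs with matching boundary germs are ambient-isotopic rel boundary through arcs avoiding $K$; Step~3 normalizes the derivative along the arc via the same loop of parabolic matrices in $\mathscr{P}$, whose nullhomotopy rel endpoints is precisely what the triviality of the $G$-action on $\pi_1(Fr^+(V \times I)) \leq \mathbb{Z}/2\mathbb{Z}$ delivers. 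The dimension hypothesis keeps all the 2-parameter isotopies disjoint from $K$ by transversality, just as for spheres.

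Next I would verify bounded acyclicity of the $p$-stabilizers as in \cref{stab ba}. Since the isomorphism type of a stabilizer depends only on the tuple $(y_0, \ldots, y_p) \in \prod^{p+1}$, it suffices to analyze a single convenient model; by \cref{deformcoame} such a stabilizer is coamenable to $\text{Diff}^r_c(\Omega')$, where $\Omega'$ is the open manifold obtained from $int(V \times I) \setminus \bigcup_i B_i$ by removing a closed tubular neighborhood of $\bigcup_i \gamma_{y_0^i}(I)$ together with the remaining arcs $\gamma_{y_j^i}(I)$ for $j \geq 1$. Because the $B_i$'s and the arcs $\gamma_u$ all lie in the product $I$-direction, combining each $B_i$ with the tubular neighborhood of the corresponding $\gamma_{y_0^i}$ produces an $I$-direction chimney whose removal frees an $\mathbb{R}$-factor on the complement, while the remaining arcs appear as $pk$ punctures on each cross-section. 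Hence $\Omega' \cong N \times \mathbb{R}$ for an appropriate manifold $N$, and \cref{ba for product} gives bounded acyclicity.

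Finally, the quotient $Y_\bullet/G$ is isomorphic to $\prod_\bullet$ via $[g_0]_{y_0} \mapsto y_0$, so it is boundedly acyclic and connected by \cref{ge to ba}. \cref{gp to quotient} then yields $H^\bullet_b(G) = 0$, and the diffeomorphism $int(V \times I) \setminus \bigcup_i B_i \cong V_k$ from the setup identifies $\text{Diff}^r_c(V_k) \cong G$, giving the theorem.
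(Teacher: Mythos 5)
Your proposal follows essentially the same route as the paper: build the flag complex on germ-classes of arc-collections labelled by $\prod$, verify genericity by transversality, prove the orbit classification by an ambient-isotopy argument for neat arcs in the simply connected product $V\times I$ (the paper packages this as \cref{3.3 also main}{} -- more precisely as \cref{3.3 ambient iso} together with the third step of \cref{annuisoto}, rather than re-running all of \cref{annuisoto} from scratch, but this is the same content), then establish bounded acyclicity of stabilizers by coamenability to a diffeomorphism group of a punctured-$V$ times $\mathbb{R}$, and identify the quotient with $\prod_\bullet$. The organizational difference (a standalone $V\times I$ version of the main lemma versus the paper's Lemma~3.3 plus a citation of Step~3) is cosmetic, and the argument matches what the paper does.
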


\begin{proof}

    Given \s{x\in\prod},
    let \s{G_{x}} be the group of elements of \s{G} that have the trivial germ at \s{\bigsqcup_{1\leq i\leq k} im(\gamma_{x^i})}.
    Let \s{X_0:=\bigsqcup_{x\in \prod} G/G_x}.
    Define an binary relation on \s{X_0} as follows: For \s{g_1G_{x_1}, g_2G_{x_2}\in X_0}, write \s{x_1=(x_1^1,\cdots x_1^k)}, \s{x_2=(x_2^1,\cdots,x_2^k)}. Define \s{g_1G_{x_1}\perp g_2G_{x_2} } if \s{g_1(\bigcup_{1\leq i\leq k}im(\gamma_{x^i_1}))}  and \s{ g_2(\bigcup_{1\leq i\leq k}im(\gamma_{x^i_2}))} are disjoint.
A necessary condition for this relation is that the set of all coordinates of \s{x_1} is disjoint from the set of all coordinates of \s{x_2}, in this case we say \s{x_1} is disjoint from \s{x_2}.

    Let \s{X_\bullet} be the associated complex defined by this relation. That is, every \s{m}-simplex in \s{X_m} is a pair-wise \s{\perp}-tuple \s{(g_0G_{x_0},\cdots g_mG_{x_m})}. Now \s{G} acts simplicially on \s{X_\bullet}.

    First we prove \s{\perp} is generic:  For \s{\{g_1G_{x_1},\cdots g_mG_{x_m}\}\subset X_0}, there is \s{y=(y^1,\cdots ,y^k)\in \prod} that is disjoint from each \s{x_j,1\leq j\leq m}. And by transversality there is a \s{g\in G} which is a small perturbation of  \s{id\in G}, such that for each \s{1\leq i\leq k}, \s{g\gamma_{y^i}} is transversal hence  disjoint from  \s{\bigcup_{1\leq i\leq k, 1\leq j\leq m}im(g_j\gamma_{x_j^i})} . So \s{gG_{y}\perp g_jG_{x_j}} for each \s{1\leq j\leq m} . This shows \s{\perp} is generic.

    Next we prove the quotient complex is isomorphic to \s{\prod_\bullet}, which is boundedly acyclic. 
    Define chain map \s{F_\bullet: X_\bullet/G\ra \prod_\bullet} as follows:
    For  \s{\sigma=G(g_0G_{x_0},\cdots g_mG_{x_m})\in X_m/G}, define \s{F_m(\sigma)=(x_0,\cdots ,x_m)}. It's a well defined chain map. It's surjective since for every \s{(x_0,\cdots ,x_m)\in \prod_m}, \s{F_m(G(idG_{x_0},\cdots idG_{x_m}))=(x_0,\cdots ,x_m)}. It's injective by \cref{3.5trans}.

    Finally we prove every stabilizer of the \s{G}-action on \s{X_\bullet} is uniformly boundedly acyclic. By \cref{3.5trans}, the stabilizer of \s{(g_0G_{x_0},\cdots g_mG_{x_m})} is isomorphic to the stabilizer of \s{(idG_{x_0},\cdots idG_{x_m})}, which is \s{  \text{Diff}^r_c(V_k-\bigcup_{1\leq i\leq k,0\leq j\leq m} im(\gamma_{x_j^i}))}. Since any compact subset of \s{V_k-\bigcup_{1\leq i\leq k,0\leq j\leq m} im(\gamma_{x_j^i})} can be deformed into \s{int(  N_{k,m})-\bigcup_{1\leq i\leq k,1\leq j\leq m} im(\gamma_{x_j^i})} via a diffeomorphism in \s{G},  we see that this stabilizer is coamenable to \s{\text{Diff}^r_c(int(  N_{k,m})-\bigcup_{1\leq i\leq k,1\leq j\leq m} im(\gamma_{x_j^i}))}. Here \s{N_{k,m}} is defined in \cref{3.5trans}. Using \cref{3.3 ambient iso} multiple times by setting \s{M} to be \s{(V-\text{ finitely many points })}, we see \s{int(  N_{k,m})-\bigcup_{1\leq i\leq k,1\leq j\leq m} im(\gamma_{x_j^i})} is diffeomorphic to \s{(int(V-P_{k,m}))\times (0,1)}, where \s{P_{k,m}} is a finite set in \s{int(V)} with \s{km} points. For an intuition, see \cref{figure 2}.    
    Since by \cref{ba for product}, \s{\text{Diff}^r_c((int(V-P_{k,m}))\times (0,1))} is boundedly acyclic, the result follows.
\end{proof}

\begin{figure}\label{V_times I}
    \centering
    \includegraphics[width=0.9\linewidth]{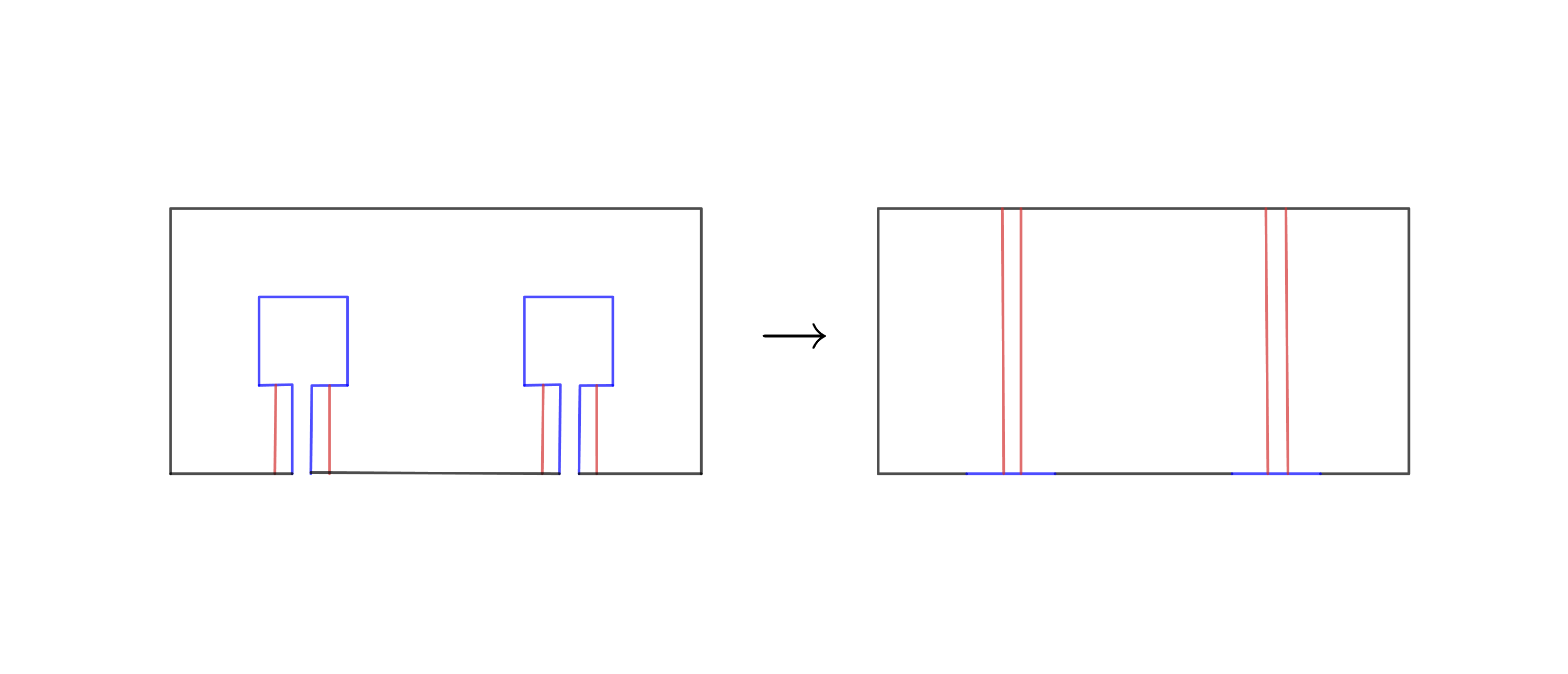}
    \caption{A way to see \s{int(  N_{k,m})-\bigcup_{1\leq i\leq k,1\leq j\leq m} im(\gamma_{x_j^i})} is diffeomorphic to \s{(int(V-P_{k,m}))\times (0,1)} when \s{k=2,m=2}.  The red lines are \s{\bigcup_{1\leq i\leq k,1\leq j\leq m}im(\gamma_{x_j^i})}. }
    \label{figure 2}
\end{figure}

\begin{lemma}\label{3.3 ambient iso}Let \s{M} be a smooth simply connected manifold of dimension \s{\geq 3}, such that \s{\partial M\neq \varnothing}.
    Then every two neat \s{C^r}-embeddings of \s{I} into \s{M\times I} are  ambient \s{C^r}-isotopic. Moreover, if these two embedding have the same germ at the endpoints, then the isotopy can be chosen to have compact support in the interior of \s{M\times I}.
\end{lemma}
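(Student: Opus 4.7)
My plan is to match the endpoints first, use simple-connectivity of $M\times I$ to produce a homotopy between the two arcs, perturb this homotopy into an isotopy by a dimension-count argument, and extend it ambiently via the isotopy extension lemma \cref{iso-ex}.

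I would first dispose of the \emph{moreover} statement, from which the general case easily follows. Assume $\alpha$ and $\beta$ have the same germ at $\partial I$. Since $M\times I$ is simply connected, $\alpha$ is homotopic to $\beta$ rel $\partial I$; by \cref{dense of smooth map} together with a cut-off in a neighborhood of $\partial I$, I may take this to be a $C^r$-smooth map $H\colon I\times I\to M\times I$ with $H_0=\alpha$, $H_1=\beta$, and $H(s,t)=\alpha(t)$ for every $s\in I$ and every $t$ in a fixed neighborhood of $\partial I$; in particular the ``support'' of $H$ is compact in $\operatorname{int}(M\times I)$. Consider the graph map $\hat H\colon I\times I\to I\times(M\times I)$, $(s,t)\mapsto(s,H(s,t))$: clearly $\hat H$ is an embedding if and only if every slice $H_s$ is. Now $\hat H$ is already an embedding in a neighborhood of $\partial(I\times I)$ (on $\{0,1\}\times I$ because $\alpha,\beta$ are embeddings, and on $I\times\partial I$ because those arcs map into the distinct lines $I\times\{\alpha(0)\}$ and $I\times\{\alpha(1)\}$). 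Since $\dim(I\times(M\times I))=\dim M+2\geq 5=2\cdot\dim(I\times I)+1$, a relative form of \cref{emb dense} lets me perturb $\hat H$, rel a neighborhood of $\partial(I\times I)$, to a true embedding. This gives a $C^r$-isotopy $(H_s)_{s\in I}$ of neat embeddings of $I$ into $M\times I$ from $\alpha$ to $\beta$ with support compactly contained in $\operatorname{int}(M\times I)$, and \cref{iso-ex} then produces the desired ambient diffeotopy with compact support in $\operatorname{int}(M\times I)$.

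For the general statement, I would use the \emph{moreover} case after matching the boundary data. The set $\partial(M\times I)=(\partial M\times I)\cup(M\times\partial I)$ is connected since $M$ is connected and $\partial M\neq\varnothing$ (the pieces $M\times\{0\}$ and $M\times\{1\}$ are linked by $\partial M\times I$); after smoothing corners it is a smooth manifold of dimension $\dim M\geq 3$, so its identity component of diffeomorphisms acts transitively on ordered pairs of distinct points. Extending such an isotopy via a collar of $\partial(M\times I)$ gives an ambient isotopy of $M\times I$ that moves $\beta(0),\beta(1)$ to $\alpha(0),\alpha(1)$; a further compactly supported isotopy in coordinate charts near the endpoints aligns the germs of $\beta$ with those of $\alpha$. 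The moreover case then finishes the argument.

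The main delicate step is the perturbation of $H$ into an isotopy. The trick is passing from $H$ to its graph $\hat H$: this raises the ambient dimension by one, so that the Whitney bound $\dim(\text{target})\geq 2\dim(\text{source})+1$ becomes exactly $\dim M\geq 3$, which is our hypothesis. This single bookkeeping move ensures simultaneously that each time-slice is an embedding and that distinct slices are disjoint in $I\times(M\times I)$, i.e.\ that the perturbed $H$ is genuinely an isotopy; keeping the perturbation relative to $\partial(I\times I)$ then ensures everything needed to apply \cref{iso-ex} with the required compact support.
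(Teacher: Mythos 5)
Your overall scaffolding—match the boundary data, use simple connectivity to get a homotopy, then upgrade the homotopy to an isotopy and apply \cref{iso-ex}—agrees with the paper, but the key upgrading step has a genuine gap. You apply \cref{emb dense} to the graph map $\hat H\colon I\times I\to I\times(M\times I)$. That theorem perturbs $\hat H$ in \emph{all} directions of the target $I\times(M\times I)$, so the resulting embedding of $I\times I$ need not be of graph form $(s,t)\mapsto(s,G(s,t))$; the biconditional ``$\hat H$ is an embedding iff every $H_s$ is'' is correct but applies only to graph maps, and therefore does not let you read off an isotopy from the perturbed map. To make your route work you would need a \emph{parametric} (multijet) transversality theorem that perturbs $H$ itself while retaining the fibration over the $s$-coordinate, so that each slice $H_s\colon I\to M\times I$ becomes an embedding. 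Your dimension count is in fact exactly the one needed for such a parametric statement (for immersivity the bad set has dimension $2-(\dim M+1)$, for injectivity $3-(\dim M+1)$, both negative when $\dim M\geq 3$), but the multijet theorem is not among the tools developed in the paper, so your argument as written is incomplete. Alternatively one could try to level the non-graph embedding by an Ehresmann-style reparametrization, but that too is an additional step you would have to supply.

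The paper avoids parametric transversality entirely by straightening the arc by hand. Writing the given arc as $t\mapsto(c(t),d(t))\in M\times I$, it first applies \cref{emb dense} to the one-dimensional $M$-coordinate $c$ alone—a single, non-parametric use of the Whitney bound, for which $\dim M\geq 3$ already suffices—to make $c$ an embedding away from the endpoints; it then linearly isotopes the $I$-coordinate to bring the arc to $t\mapsto(c(t),t)$. Once the $I$-coordinate is the inclusion, \emph{any} path of maps $c_s\colon I\to M$ automatically produces an isotopy $t\mapsto(c_s(t),t)$, since the $I$-coordinate already guarantees injectivity; simple connectivity then supplies the nullhomotopy $c_s$. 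This bypasses the need to control a two-dimensional family all at once. You should either switch to this more elementary line or explicitly cite and verify a multijet/parametric transversality theorem to close the gap in your graph argument.
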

\begin{proof}
    We first prove they are \s{C^r}-isotopic, then use isotopy extension theorem to get the ambient isotopy. 

   Let \s{f,g: I\ra M\times I} be two neat embeddings.  We may assume \s{f(t)=(v,t),t\in I} where \s{v\in M} is a given point. Since \s{\partial (V\times I)} is connected, by \cref{3.4rmk} we can modify \s{g} by composing a \s{H\in \text{Diff}^r(int(M\times I))\cap Homeo_0(M\times I)} so that \s{f,Hg} have the same image at the endpoints. Moreover, similar to \cref{iso of arc in Dn}, by composing  an appropriate diffeomorphism \s{H'} which is isotopic to the identity such that \s{supp(H')} is contained in a small neighborhood of \s{Hg(\{0,1\})}, we have \s{H'Hg} has the same germ with \s{f} at the endpoints. So now we assume that \s{g} has the same germ with \s{f} at endpoints. Now we follow the first two steps of the proof of \cref{annuisoto}. We can approximate \s{\pi_1g} by a map \s{c: I\ra M} such that \s{c\equiv v} on \s{[0,\epsilon]\cup [1-\epsilon,1]} and \s{c_{[\epsilon,1-\epsilon]}} is an embedding. Then \s{g':t\mapsto (c(t),\pi_2g(t)} is also an embedding when \s{c} is \s{C^1}-close enough to \s{\pi_1g}.
    We have \s{g'} is isotopic to \s{h:t\ra (c(t),t)}.  
 Let \s{c_s:I\ra M,s\in I} be a smooth homotopy such that \s{c_0=c,c_1\equiv v,(c_s)_{[0,\epsilon]\cup [1-\epsilon,1]}\equiv v, \forall s\in I}.
 Then \s{h} is isotopic to \s{h':t\mapsto (v,t)} by \s{h_s:t\mapsto (c_s(t),t)}.
 
 Now we have a isotopy \s{H} from \s{f} to \s{g} which coincides with \s{f} at a neighborhood of endpoints. Hence we can extend it to be a ambient isotopy by \cref{iso-ex}, where the support lies in a small neighborhood of \s{im(H)}.
\end{proof}

\begin{remark}\label{3.4rmk}
    Recall a well-known fact that given a smooth manifold \s{M=M^m}, 
    if two points \s{a,b} lie in the same component of  \s{\partial M}, then there exist \s{H\in \text{Diff}^\infty_0(M)} which sends \s{a} to \s{b}. We remark that the similar result also holds, when \s{M} is a differential manifold with corners, \ie, it's locally diffeomorphic to open sets of \s{\mr_+^m}. at this time \s{H\in \text{Homeo}_0(M)\cap \text{Diff}^\infty(int(M))}. We only need to specify the case when \s{a,b} are near the corner. To do this, we only need to use the fact there is a homeomorphism between \s{\mr^{n-1}\times \mr_+} and \s{\mr^{n-k}\times \mr_+^{k}} which is differentiable at points outside the corner for \s{1\leq k\leq n}.
\end{remark}

\begin{lemma}\label{3.5trans}
    \s{G(g_0G_{x_0},\cdots g_mG_{x_m})=G(g'_0G_{x'_0},\cdots g'_mG_{x'_m})} if and only if \s{(x_0,\cdots ,x_m)=(x'_0,\cdots ,x'_m)}.
\end{lemma}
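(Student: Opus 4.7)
The plan is to mirror the proof of \cref{transitivity of orbit} almost verbatim, using an analog of the main lemma \cref{annuisoto} in the ambient manifold $V\times I$ as the key new input.

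The ``only if'' direction is immediate: every $h\in G$ has compact support in $\operatorname{int}(V\times I)\setminus \bigcup_{i} B_i$, and in particular fixes the endpoints of each arc $\gamma_{x^i}$; hence the tuple $x_j$ attached to each vertex is invariant under the $G$-action on simplices.

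For the ``if'' direction, I would induct on $m$. At the inductive step, apply the hypothesis to the last $m$ vertices to produce $\kappa\in G$ reducing to the case $g_iG_{x_i}=g'_iG_{x_i}$ for all $i\ge 1$, so that only the $0$-th vertex remains to be adjusted. I would then sweep through the coordinates $q=1,\ldots,k$, at each stage finding an element of $G$ that matches the germ of $g_0$ with that of $g'_0$ on $g_0(\operatorname{im}\gamma_{x_0^q})$ without disturbing the already-matched germs on $g_0(\operatorname{im}\gamma_{x_0^i})$ for $i<q$, nor the germs on $\bigcup_{i,j\ge 1} g_j(\operatorname{im}\gamma_{x_j^i})$, nor on $\bigcup_i B_i$.

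Each such step would be supplied by the following analog of \cref{annuisoto}: given $f\in G$, an arc $\alpha=g_0(\operatorname{im}\gamma_{x_0^q})$ lying in $\operatorname{int}(V\times I)\setminus \bigcup_i B_i$, and a set $K$ consisting of the disks $B_i$ together with finitely many arcs, all disjoint from $\alpha\cup f(\alpha)$, there exists $\varphi\in G$ agreeing with $f$ on a neighborhood of $\alpha$ and having trivial germ on $K$. Its proof would copy the three steps of \cref{annuisoto}: step (1) uses \cref{dense of smooth map} and \cref{diff op} to reduce to the case where $f$ has a smooth germ on $\alpha$; step (2) uses \cref{emb dense}, \cref{away from disk}, \cref{emb and trans}, and \cref{iso-ex} to ambiently isotope $f|_\alpha$ to a ``standard'' inclusion of $\alpha$ through an isotopy whose $2$-dimensional trace can be perturbed to miss the codimension $\ge 3$ arcs in $K$ (since $\dim(V\times I)\ge 4$); step (3) trivializes the differential $Df$ along $\alpha$ using the fact---established in the preamble to \cref{3.2 also main} via the simple-connectedness of $V$---that the $G$-action on $\pi_1(Fr^+(V\times I))$ is trivial, then linearly interpolates and applies \cref{iso-ex} once more.

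I expect the main obstacle to be step (3): in \cref{annuisoto} the argument is written for the specific arc $0\times I\subset S^{n-1}\times I$, and transplanting it to an arbitrary arc $\alpha$ in $V\times I$ requires replacing the matrix group $\mathscr{P}$ by a local trivialization of $Fr^+(V\times I)$ along $\alpha$ and verifying that the $\pi_1$-homotopy is supported away from the endpoints and from $K$. However, the $\pi_1$-input needed is exactly what was prepared before the statement of \cref{3.2 also main}, so no essentially new topology should be required.
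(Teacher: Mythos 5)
Your proposal runs the inductive sweep of \cref{transitivity of orbit} again, while the paper's own proof of \cref{3.5trans} is organized in the \emph{opposite} order: it first acts by $g'_0 g_0^{-1}$ to force $g_0 = g'_0 = \mathrm{id}$, fixes the $0$-th vertex for good, and then adjusts the remaining vertices $j\ge 1$ (no induction on $m$, no coordinate-by-coordinate sweep of the $0$-th vertex). This difference is not cosmetic. Once $g_0 = \mathrm{id}$, the $0$-th arcs are the \emph{standard} arcs $\gamma_{x_0^i}$, and the manifold $N_{k,m}$ obtained by deleting open tubular neighborhoods of those standard arcs together with the open balls $\operatorname{int}(B_i)$ is homeomorphic to $V\times I$ via a map $\rho$ that is smooth off the corner. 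Conjugating by $\rho$ turns the arcs $g_j\gamma_{x_j^i}$ ($j\ge 1$) into neat embeddings of $I$ into the \emph{product} $V\times I$, so \cref{3.3 ambient iso} applies directly (with $M = V$ minus finitely many points, moving one arc at a time while holding the rest fixed), and afterwards the germs are matched in one shot by the $\pi_1(Fr^+)$ argument and step 3 of \cref{annuisoto}. Notably, the paper never formulates or proves a stand-alone ``generalized annuisoto.''

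This points to the genuine gap in your sketch. The arc $\alpha = g_0(\operatorname{im}\gamma_{x_0^q})$ you want to straighten runs from $V\times\{0\}$ to $\partial B_q$, which are \emph{distinct} boundary components of $V_k$; $V_k$ is not of the form $M\times I$, and its boundary is disconnected. Consequently step 2 of \cref{annuisoto} (which exploits the global product $S^{n-1}\times I$ and the projection $\pi\colon S^{n-1}\times I\to S^{n-1}$) does not transplant, and \cref{3.3 ambient iso} — the paper's actual generalization of that step, stated only for neat arcs in a product $M\times I$ with connected boundary — cannot be invoked on $\alpha$ as it sits in $V_k$. Your write-up also does not mention an ``unthreading'' homeomorphism like $\rho$, which is exactly the device the paper uses to restore the product structure. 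Your proposal can in principle be rescued: after applying $\kappa$ from the induction hypothesis, one could delete tubular neighborhoods of the (now fixed) $j=1$ arcs together with the balls to get a copy of $V\times I$, inside which the $0$-th arcs become neat arcs and \cref{3.3 ambient iso} plus the $\pi_1$ argument can be run; but this extra geometric preprocessing is precisely what is missing. Your identification of the $\pi_1(Fr^+(V\times I))$ input for the framing step is correct and matches what the paper uses.
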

\begin{proof}
    The "only if"  direction is clear. Let's prove the "if" direction. Assume \s{(x_0,\cdots ,x_m)=(x'_0,\cdots ,x'_m)}. Then \s{g'_0g_0^{-1}g_0G_{x_0}=g'_0G_{x'_0}}. Without the loss of generality we assume \s{g_0=g'_0=id}. By definition, for each \s{1\leq j\leq m}, \s{g_j(\bigcup_{1\leq i\leq k}im(\gamma_{x^i_j}))} and \s{g'_j(\bigcup_{1\leq i\leq k}im(\gamma_{x^i_1}))} are disjoint from \s{\bigcup_{1\leq i\leq k}im(\gamma_{x^i_0})}. Hence \s{\bigcup_{1\leq j\leq m}(g_j(\bigcup_{1\leq i\leq k}im(\gamma_{x^i_j})) \cup g'_j(\bigcup_{1\leq i\leq k}im(\gamma_{x^i_j})))} lies in \s{int(V\times I)-\bigcup_{1\leq i\leq k}\phi(x^i_0+\epsilon D^{n-1})\times [0,\frac{1}{3}]} for some small enough \s{\epsilon>0}, where \s{\epsilon D^{n-1}} is the standard open ball in \s{\mr^{n-1}} with radius \s{\epsilon} centered at 0, and \s{x+ K:=\{x+k:k\in K\}}. 
    
Let \s{  N_{k,m}:=V\times I-\bigcup_{1\leq i\leq k}\phi(x_0^i+ int(\epsilon D^{n-1}))\times (0.\frac{1}{3})-\bigcup_{1\leq i\leq k}\phi(int(D_i))\times (\frac{1}{3},\frac{2}{3})}.
    Note that there is a homeomorphism $$\rho:   N_{k,m}\ra V\times I$$ where \s{\rho} restricts to a diffeomorphism outside the corner \s{\partial V\times \{0,1\}} of the boundary. Conjugated by \s{\rho}, each pair \s{(g_j\gamma_{x_j^i}, g_j\gamma_{x_j^i})} becomes a pair of neatly embedded \s{C^r}-arcs in \s{V\times I}, such that \s{g_j\gamma_{x_j^i}, g_j\gamma_{x_j^i}} have the same germs at the endpoints. 
    
    Now \s{g_j\gamma_{x_j^i}} and \s{g'_j\gamma_{x_j^i}} can be identified as neat embeddings of \s{I} into \s{V\times I}.  We  claim that:  There is an \s{h\in \text{Diff}^r_c(V\times I)}, such that \s{hg_j\gamma_{x_j^i}=g'_j\gamma_{x_j^i}} for each \s{1\leq i\leq k, 1\leq j\leq m}. This can be done by taking \s{M=(V- \text{ finitely many points })} in \cref{3.3 ambient iso}. At each time we only move an arc while fixing all other arcs, eventually we get the element \s{h}.
    So without the loss of generality we can moreover assume \s{g_j\gamma_{x_j^i}=g'_j\gamma_{x_j^i}} for each \s{1\leq j\leq m,1\leq i\leq k}.

    Pick a contractible neighborhood \s{U} of \s{im(\gamma_{x_j^i})}, Then the composition $$I \xrightarrow{(g'_jg_j^{-1})_*\Gamma_{x_j}} Fr^+(U)=U\times GL^+_n(\mr) \longrightarrow
 GL^+_n(\mr)$$
 is homotopic rel \s{\{0,1\}} to \s{\Gamma_{x_j}} as a path in \s{GL^+_n(\mr)}.
Now following the third step of the  proof of \cref{annuisoto},  We can find a \s{h'\in \text{Diff}_0(  N_{k,m})\cap \text{Diff}^r_c(int(  N_{k,m}))} such that \s{h'g_j} has the same germ with \s{g'_j} at \s{\bigcup_{1\leq i\leq k}im(\gamma_{x_j^i})} for each \s{1\leq j\leq m}. Hence \s{h'(g_0G_{x_0},\cdots g_mG_{x_m})=(g'_0G_{x'_0},\cdots g'_mG_{x'_m})}, as required.   
\end{proof}

\bibliographystyle{alpha}
\bibliography{references.bib}
  
\end{document}